\newtheorem{thm}{Theorem}[section]
\newtheorem{prop}{Proposition}[section]
\newtheorem{rem}{Remark}[section]
\newtheorem{exmp}{Example}
\numberwithin{equation}{section}
\numberwithin{figure}{section}
\newcommand{\dd}{\mathrm{d}}
\newcommand{\tildeminus}{%
  \mathrel{\vcenter{\offinterlineskip
    \halign{\hfil##\hfil\cr
      \(\sim\)\cr
      \noalign{\vskip -0.6ex}
      \(-\)\cr}}}}
\newcommand*\vimin{1.2}   
\newcommand*\viplus{2.2}  
\newcommand*\vmax{3.2}    
\title{A Conservative and Positivity-Preserving Discontinuous Galerkin Method for the Population Balance Equation}
\author{Ziyao Xu\footnote{Department of Applied and Computational Mathematics and Statistics, University of Notre Dame, Notre Dame, IN 46556, USA. E-mail: zxu25@nd.edu. Also with the Department of Mathematics and Statistics, Binghamton University, Binghamton, NY 13902, USA. E-mail: zxu24@binghamton.edu} , Guanyang Liu\footnote{Process Engineering \& Modeling, CMC Synthetics, Sanofi, Cambridge, MA 02141, USA. E-mail: Guanyang.Liu@sanofi.com.}\; and\; Yong-Tao Zhang\footnote{Department of Applied and Computational Mathematics and Statistics,
University of Notre Dame, Notre Dame, IN 46556, USA. E-mail: yzhang10@nd.edu. Research of Yong-Tao Zhang is partially supported by Simons Foundation MPS-TSM-00007854}}
\date{}
\begin{document}

\maketitle

\textbf{Abstract.} 
We develop a conservative, positivity-preserving discontinuous Galerkin (DG) method for the population balance equation (PBE), which models the distribution of particle numbers across particle sizes due to growth, nucleation, aggregation, and breakage.
To ensure number conservation in growth and mass conservation in aggregation and breakage, we design a DG scheme that applies standard treatment for growth and nucleation, and introduces a novel discretization for aggregation and breakage. The birth and death terms are discretized in a symmetric double-integral form, evaluated using a common refinement of the integration domain and carefully selected quadrature rules.
Beyond conservation, we focus on preserving the positivity of the number density in aggregation–breakage. Since local mass corresponds to the first moment, the classical Zhang–Shu limiter, which preserves the zeroth moment (cell average), is not directly applicable. We address this by proving the positivity of the first moment on each cell and constructing a moment-conserving limiter that enforces nonnegativity across the domain.
To our knowledge, this is the first work to develop a positivity-preserving algorithm that conserves a prescribed moment. Numerical results verify the accuracy, conservation, and robustness of the proposed method.

\bigskip

\noindent{\bf Key Words:} Discontinuous Galerkin method; Population balance equation; Conservation; Positivity-preserving

\bigskip

\section{Introduction}
The population balance equation (PBE) provides a unified framework for modeling the time evolution of dispersed particle systems within a continuous environment, such as aerosols in air \cite{ramabhadran1976dynamics}, crystals in solution \cite{Myerson2002}, cells in bioreactors \cite{kolewe2012population}, and dust in disks \cite{lombart2020grain}.
In a spatially homogeneous setting, the number density $n(v, t)$ of particles, distributed over the internal coordinate $v$ (particle size), evolves through four key mechanisms: growth, nucleation, aggregation, and breakage.
Taking solution crystallization as an example, growth refers to the addition of solute molecules from the liquid phase to crystal surfaces, resulting in an increase in particle size; nucleation denotes the spontaneous formation of new crystals from a supersaturated solution; aggregation describes the collision and adhesion of smaller crystals into larger clusters; and breakage involves the fragmentation of larger crystals into smaller pieces due to hydrodynamic stress or mechanical forces.

The limited availability of analytical solutions has motivated the development of a variety of numerical methods for solving the population balance equation. 
These methods fall broadly into three categories: Monte Carlo methods \cite{maisels2004direct, patterson2011stochastic, xu2015accelerating}, moment methods \cite{diemer2002moment, marchisio2003quadrature, attarakih2009solution}, and discretization methods \cite{kumar1996solution, rigopoulos2003finite, filbet2004numerical}.
Monte Carlo methods simulate the stochastic evolution of individual particles and are advantageous for handling multi-dimensional distributions, but they often suffer from low accuracy and generate noisy results.
Moment methods track only the low-order moments of the number density distribution and are computationally efficient, but they do not reconstruct the full distribution without additional assumptions or closures.
Discretization methods, which are the focus of this paper, approximate the full distribution directly on a grid (also referred to as sections or bins in the literature) and are the most accurate and versatile for representing general PBEs.
However, the underlying physical mechanisms in the population balance equation (PBE) introduce specific numerical challenges for discretization methods. As an integro-differential equation, the PBE involves two primary sources of difficulty:
First, the aggregation and breakage terms appear as integral operators, whose discretization requires particular care to ensure the conservation of mass. These terms contain structurally distinct birth and death contributions. If the discretizations of these two components are not carefully coordinated, the resulting truncation errors may not cancel out, leading to global mass conservation errors in the numerical scheme.
Second, the growth term acts as a convective transport in the internal coordinate space, and thus inherits the numerical challenges typical of hyperbolic equations. In particular, sharp gradients in the distribution can cause spurious oscillations or numerical instability. Moreover, low-order schemes may introduce excessive numerical diffusion, especially toward the large-size region of the particle spectrum. Since number densities of large particles are often several orders of magnitude smaller than those of smaller particles, even mild over-diffusion can significantly contaminate the solution in this regime. This issue is frequently observed in low-resolution schemes.

One of the earliest discretization methods for approximating the number density distribution in the PBE with aggregation alone was presented by Hidy and Brock \cite{hidy1970dynamics}. Their method computes number densities on a uniform grid and closely resembles the discrete Smoluchowski equation \cite{smoluchowski1918}, an early form of the PBE that models aggregation processes over discrete particle sizes.
Bleck \cite{bleck1970fast} proposed a mass-weighted finite volume scheme for the aggregation PBE over logarithmic bins, yielding a sparse ODE system that facilitates fast computation.
In 1996, Kumar and Ramkrishna \cite{kumar1996solution} proposed the notable fixed-pivot technique to approximate number density on arbitrary grids. The method redistributes each newly formed aggregate onto the two nearest pivots in such a way that any two specified moments (typically the zeroth and first) are preserved exactly. They later generalized this idea to the moving-pivot technique for adaptive resolution \cite{kumar1996solution2}, and in a third paper \cite{kumar1997solution}, coupled it with a method-of-characteristics discretization for convective growth, thereby minimizing numerical diffusion and resolving stability issues arising from the hyperbolic nature of the growth term.
Lim et al. subsequently adopted a high-resolution finite difference WENO scheme to discretize the hyperbolic growth term, achieving sharper front tracking without sacrificing the fixed-pivot treatment of aggregation \cite{lim2002solution}.
Weighted residual methods were introduced by Rigopoulos and Jones \cite{rigopoulos2003finite} for the full PBE, using piecewise linear finite element trial functions. They also analyzed and mitigated potential sources of conservation errors.
Because exact conservation is delicate in the integro-differential form of the PBE, many authors recast the equation in terms of the mass density $m(v, t)=v n(v, t)$. Filbet and Laurençot \cite{filbet2004numerical} showed that $m(v, t)$ in the aggregation-only PBE satisfies a conservative formulation with non-local fluxes and solved it using a low-order finite volume method. This framework has since been widely adopted. 
Qamar and Warnecke \cite{Qamar2007FVM} embedded nucleation and growth into the same finite volume structure. 
Bourgade and Filbet \cite{bourgade2008convergence} analyzed the convergence of the finite volume scheme for the aggregation–breakage PBE in its conservative form. 
Gabriel and Tine \cite{gabriel2010high} improved accuracy by employing a fifth-order WENO reconstruction. 
Liu et al. \cite{liu2019high} designed a high-order, positivity-preserving DG scheme based on the conservative formulation, which Lombart and Laibe later adapted to grain growth in protoplanetary disks \cite{lombart2021grain}.
While the mass-density formulation naturally conserves the first moment, several recent studies revisit the original number-density equation and enforce conservation through specially constructed integration maps. Liu and Rigopoulos \cite{liu2019conservative} introduced an aggregation-map finite volume method that preserves mass to machine precision for the aggregation PBE. O’Sullivan and Rigopoulos \cite{o2022conservative} extended the map to include breakage, nucleation, and growth within a single conservative framework. Sewerin \cite{sewerin2023efficient} derived analytic quadrature rules that reduce the cost of evaluating the non-local integrals for both constant and linear reconstructions.
Building on these conservative finite volume ideas, the present work develops a high-order, positivity-preserving DG method that inherits exact moment conservation and retains sharp resolution on arbitrary meshes.

The discontinuous Galerkin (DG) method is a class of finite element methods that employs piecewise polynomials as basis functions.
Since its introduction by Reed and Hill in the 1970s for solving steady-state transport equations \cite{reed1973triangular}, the DG framework has been extensively studied and successfully extended to a wide range of problems, including hyperbolic equations \cite{shu2009discontinuous}, Hamilton–Jacobi equations \cite{cheng2007discontinuous}, Maxwell’s equations \cite{cockburn2004locally}, elliptic and parabolic equations \cite{riviere2008discontinuous, cockburn1998local}, and higher-order partial differential equations \cite{cheng2008discontinuous, xu2010local}.
Among its variants, the Runge--Kutta discontinuous Galerkin (RKDG) method developed by Cockburn and Shu \cite{RKDG2, RKDG1} has become one of the primary tools for solving conservation laws. 
It can be viewed as a high-order generalization of the finite volume method, where numerical fluxes are carefully designed to transfer information between neighboring cells, and integration of the equations against test functions allows for the accurate computation of multiple moments of the solution.
As a result, RKDG schemes offer advantages such as high-order accuracy, compact stencils, $h$-$p$ adaptivity, local conservation, and high parallel efficiency, making them well suited for complex geometries and large-scale simulations.

In this work, we apply the DG method to solve the population balance equation, with standard treatment for the hyperbolic growth term. 
However, the discretization of the aggregation and breakage terms presents a special challenge: while number conservation in the growth term arises naturally from the conservative form, mass conservation in aggregation--breakage is more delicate. 
To ensure that the first moment (mass) is preserved, we extend and refine the idea of the aggregation map from \cite{liu2019conservative, o2022conservative, sewerin2023efficient} to propose a DG discretization in which the birth and death terms are written in a symmetric double-integral form, evaluated using a common domain refinement based on constrained Delaunay triangulation (CDT) \cite{chew1987constrained} and $D_3$-symmetric quadrature rules \cite{wandzurat2003symmetric}. 
This guarantees exact cancellation of the birth and death contributions to mass.
However, conservation is not the only concern for numerical methods.
For high-order schemes to be robust, it is also essential to preserve the positivity of key physical quantities, such as the density. 
The influential framework of Zhang and Shu \cite{zhang2010maximum} introduced a general methodology for constructing positivity- and bound-preserving DG schemes. The key idea is to first ensure positivity of cell averages through flux choices and CFL constraints, then apply a scaling limiter that modifies the solution within each cell while preserving the cell average and high-order accuracy.
This strategy, however, cannot be directly applied to the aggregation--breakage PBE. 
In this setting, the quantity of interest is the number density $n(v,t)$, which must remain nonnegative. 
However, it is not the \textit{zeroth moment} (number) but the \textit{first moment} (mass) that must be conserved. 
As shown in later sections, while the total particle number may vary due to aggregation and breakage, the total particle mass remains constant. 
This distinction complicates the application of existing positivity-preserving frameworks.
One approach \cite{liu2019high} in the literature is to reformulate the PBE in terms of the mass density $m(v,t) = v n(v,t)$, which satisfies a conservation law with a nonlocal flux. 
Positivity-preserving schemes can then be developed for $m(v,t)$, and limiters can be applied to preserve the average local mass.
However, this formulation alters the standard hyperbolic conservation structure when a growth term is present, as the growth term no longer appears in conservative form.
In this paper, we take a different path: we work directly with the number density $n(v,t)$ and develop a DG scheme that maintains mass conservation through careful design. 
We prove the positivity of the first moment of the numerical solution and introduce a new scaling limiter that preserves an arbitrary moment while enforcing nonnegativity across the domain. 
A corresponding CFL condition is derived to ensure positivity preservation.
To the best of our knowledge, this is the first work to design a positivity-preserving DG algorithm that explicitly targets the conservation of a specified moment. We expect the methodology to be useful in a broader class of applications where physical moments, rather than cell averages, are the primary conserved quantities.

The remainder of the paper is organized as follows.
In Section 2, we present the model problem, including the mathematical formulation of the population balance equation (PBE) and the underlying conservation properties, which motivate the design of our conservative DG scheme.
Section 3 introduces the proposed high-order conservative discontinuous Galerkin method, establishes its conservation properties, and discusses key implementation details.
In Section 4, we develop the positivity-preserving strategy for the aggregation–breakage PBE, consisting of a two-step procedure: first ensuring the positivity of a cell moment, then applying a novel moment-conserving scaling limiter to enforce nonnegativity across the entire domain.
Section 5 presents numerical experiments that verify the method’s high-order accuracy, conservation properties, and positivity preservation.

\section{Model Problem}\label{sec:math}
The population balance equation (PBE), accounting for the growth, aggregation, breakage, and nucleation of particles, is an integro-differential equation given by
\begin{equation}\label{eq:PBE}
\begin{split}
\frac{\partial n(v,t)}{\partial t}&+\frac{\partial \big(G(v)n(v,t)\big)}{\partial v} = \frac12\int_{0}^{v}\beta(w,v-w)n(w,t)n(v-w,t)\,dw\\
& - \int_{0}^{\infty}\beta(v,w)n(v,t)n(w,t)\,dw + \int_{v}^{\infty}p(v,w)\gamma(w)n(w,t)\,dw\\
& -\gamma(v)n(v,t) + S(v),
\end{split}
\end{equation}
where the terms are defined as follows:
\begin{itemize}
\item $n(v,t)$: Number density of particles of volume $v$ at time $t$.
\item $G(v)$: Growth rate in volume for particles of size $v$.
\item $\beta(v,w)$: Aggregation kernel --- frequency at which particles of volumes $v$ and $w$ coalesce. It satisfies:
\begin{equation}\label{eq:collision_symmetry}
\beta(v,w)=\beta(w,v) \quad \text{(collision symmetry)}.
\end{equation}
\item $\gamma(v)$: Breakage rate for particles of volume $v$.
\item $p(v,w)$: Daughter distribution function --- the number density of fragments of size $v$ produced when a parent particle of volume $w$ breaks, with $v\leq w$.
It satisfies:
\begin{equation}\label{eq:mass_conservation}
\int_0^w v\,p(v,w)\,dv = w \quad \text{(mass conservation)}.
\end{equation}
\item $S(v)$: Nucleation rate for particles of volume $v$. Often modeled as a delta source: $S(v)=N^{\star}\delta(v-v^{\star})$, where $N^{\star}$ is the nucleation rate at the nucleation volume $v^{\star}$, and $\delta$ is the Dirac delta function.
\end{itemize}
The existence, uniqueness, and nonnegativity of solutions for aggregation have been proved for bounded kernels \cite{melzak1957scalar} and for certain unbounded kernels \cite{mcleod1962infinite}.  
In this paper, we assume that terms in \eqref{eq:PBE} are either bounded or grow mildly, so that all integrals appearing later remain finite.
Below, we demonstrate key properties of the PBE, including number conservation for the growth and mass conservation for the aggregation and breakage.
Although the proofs of these properties are straightforward, we present them in detail because the design of the numerical schemes closely aligns with these proofs.

\begin{prop}
The growth of particles conserves the total particle number; that is, the particle number $N(t) := \int_{0}^{\infty} n(v,t)\,dv$ remains constant in the population balance equation when only the growth term is present:
\begin{equation}\label{eq:PBE_growth}
\frac{\partial n(v,t)}{\partial t} + \frac{\partial \big(G(v)n(v,t)\big)}{\partial v} = 0,
\end{equation}
provided that the number density $n(\cdot, t) \in \mathcal{C}^{\infty}_{0}(\mathbb{R}^{+})$ is compactly supported.
\end{prop}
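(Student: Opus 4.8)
The plan is to show that $N(t)=\int_0^\infty n(v,t)\,dv$ has vanishing time derivative by integrating the conservative PDE \eqref{eq:PBE_growth} over the whole size domain $(0,\infty)$. First I would differentiate under the integral sign (justified because $n(\cdot,t)\in\mathcal{C}_0^\infty(\mathbb{R}^+)$, so the integrand and its $t$-derivative are smooth and compactly supported) to write
\begin{equation}\nn
\frac{dN}{dt}=\int_0^\infty \frac{\partial n(v,t)}{\partial t}\,dv.
\end{equation}
Then I would substitute $\partial_t n = -\partial_v\big(G(v)n(v,t)\big)$ from \eqref{eq:PBE_growth}, so that the right-hand side becomes the integral of a pure $v$-divergence.

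Next I would apply the fundamental theorem of calculus to this exact derivative, giving
\begin{equation}\nn
\frac{dN}{dt}=-\int_0^\infty \frac{\partial}{\partial v}\big(G(v)n(v,t)\big)\,dv=-\Big[G(v)n(v,t)\Big]_{v=0}^{v=\infty}.
\end{equation}
The remaining task is to argue that this boundary term vanishes. Since $n(\cdot,t)$ is compactly supported in $\mathbb{R}^+$, its support is contained in some interval $[a,b]$ with $a>0$ and $b<\infty$; in particular $n(v,t)=0$ for all $v$ in a neighborhood of $0$ and for all sufficiently large $v$. Hence $G(v)n(v,t)$ vanishes at both endpoints (at $v=0$ because $n$ itself vanishes there, and as $v\to\infty$ for the same reason, using the mild-growth assumption on $G$ so that the product still tends to zero), and the boundary contribution is identically zero. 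Therefore $dN/dt=0$, so $N(t)$ is constant.

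The argument is essentially a one-line conservation-law computation, so there is no deep obstacle; the only point requiring care is the boundary term, and that is precisely why the compact-support hypothesis on $n(\cdot,t)$ is imposed in the statement. I would make explicit that compact support is what kills the endpoint evaluation without needing any decay assumption on $G$ beyond the standing boundedness/mild-growth convention stated after \eqref{eq:PBE}, and I would note that this same integrate-and-cancel structure is exactly what the conservative DG discretization will mimic discretely, which is the reason the proof is recorded in detail here.
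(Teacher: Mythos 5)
Your proof is correct and follows essentially the same route as the paper: differentiate $N(t)$ under the integral sign, substitute the PDE to get a pure $v$-derivative, and evaluate the boundary term, which vanishes by compact support of $n(\cdot,t)$. The extra care you give to the boundary term (noting that compact support alone kills it, with no decay assumption on $G$ needed) is a sound elaboration of the paper's one-line computation, not a different argument.
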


\begin{proof}
From \eqref{eq:PBE_growth}, it is straightforward to compute that
\begin{equation*}
\begin{split}
\frac{\dd N(t)}{\dd t} & = \int_{0}^{\infty} \frac{\partial n(v,t)}{\partial t}\,dv = -\int_{0}^{\infty} \frac{\partial \big(G(v)n(v,t)\big)}{\partial v} \,dv = -G(v)n(v,t)\Big|_{0}^{\infty} = 0.
\end{split}
\end{equation*}
\end{proof}

\begin{prop}\label{prop:mass_conservation}
The aggregation and breakage of particles conserve the total particle mass; that is, the particle mass $M(t) := \int_{0}^{\infty}vn(v,t)\,dv$ (assuming constant particle density) remains constant in the population balance equation when only the aggregation and/or breakage terms are present:
\begin{equation}\label{eq:PBE_aggregation_breakage}
\begin{split}
\frac{\partial n(v,t)}{\partial t}=& \frac12\int_{0}^{v}\beta(w,v-w)n(w,t)n(v-w,t)\,dw - \int_{0}^{\infty}\beta(v,w)n(v,t)n(w,t)\,dw \\
&+ \int_{v}^{\infty}p(v,w)\gamma(w)n(w,t)\,dw -\gamma(v)n(v,t).
\end{split}
\end{equation}
\end{prop}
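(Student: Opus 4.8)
The plan is to differentiate $M(t) = \int_{0}^{\infty} v\,n(v,t)\,dv$ under the integral sign, substitute the right-hand side of \eqref{eq:PBE_aggregation_breakage}, and then show that the aggregation pair (birth minus death) and the breakage pair (birth minus death) each cancel \emph{separately}. Because the paper assumes all kernels are bounded or grow mildly so that the relevant integrals converge, the differentiation under the integral and the Fubini-type exchanges used below are justified, and I would invoke this integrability assumption once at the outset rather than track it term by term.

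For the aggregation contribution, I would treat the birth term $\tfrac12\int_{0}^{\infty} v\int_{0}^{v}\beta(w,v-w)n(w)n(v-w)\,dw\,dv$ via the substitution $u = v-w$ with $w$ held fixed. This maps the region $\{0\le w\le v\}$ onto the first quadrant $\{w\ge 0,\ u\ge 0\}$ and converts the weight $v$ into $w+u$, giving $\tfrac12\int_{0}^{\infty}\int_{0}^{\infty}(w+u)\,\beta(w,u)n(w)n(u)\,du\,dw$. Splitting $w+u$ and using the collision symmetry $\beta(w,u)=\beta(u,w)$ from \eqref{eq:collision_symmetry} to swap $w\leftrightarrow u$ in the $u$-weighted half, the factor $\tfrac12(w+u)$ collapses to a single $w$, yielding $\int_{0}^{\infty}\int_{0}^{\infty} w\,\beta(w,u)n(w)n(u)\,du\,dw$. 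The death term $-\int_{0}^{\infty} v\int_{0}^{\infty}\beta(v,w)n(v)n(w)\,dw\,dv$, after renaming the dummy variables, is exactly $-\int_{0}^{\infty}\int_{0}^{\infty} w\,\beta(w,u)n(w)n(u)\,du\,dw$, so the two terms cancel.

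For the breakage contribution, I would rewrite the birth term $\int_{0}^{\infty} v\int_{v}^{\infty} p(v,w)\gamma(w)n(w)\,dw\,dv$ by exchanging the order of integration over $\{0\le v\le w\}$, obtaining $\int_{0}^{\infty}\gamma(w)n(w)\left(\int_{0}^{w} v\,p(v,w)\,dv\right)dw$. The inner integral equals $w$ by the daughter-distribution mass-conservation identity \eqref{eq:mass_conservation}, so the birth term reduces to $\int_{0}^{\infty} w\,\gamma(w)n(w)\,dw$, which cancels the death term $-\int_{0}^{\infty} v\,\gamma(v)n(v)\,dv$ after renaming. Combining both cancellations gives $dM/dt = 0$.

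The computations themselves are elementary; the only steps requiring genuine care are the change of variables plus symmetrization in the aggregation birth term and the order-swap that sets up \eqref{eq:mass_conservation} in the breakage birth term. Since the excerpt states that the numerical scheme is designed to mirror these manipulations, I would present them in full rather than abbreviate, so that the discrete analogue of each cancellation can later be matched step for step.
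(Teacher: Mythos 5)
Your proof is correct and follows essentially the same route as the paper's: the same split into aggregation and breakage pairs, the same change of variables $u=v-w$ plus collision symmetry \eqref{eq:collision_symmetry} for the aggregation cancellation, and the same Fubini-plus-\eqref{eq:mass_conservation} argument for breakage (you merely run the breakage identity from the birth side rather than the death side, which is immaterial). The paper's only addition is a closing remark that the cancellation can degenerate to $\infty-\infty$ for gelling kernels, which your up-front integrability assumption already covers.
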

\begin{proof}
From \eqref{eq:PBE_aggregation_breakage}, one can compute that
\begin{equation*}
\begin{split}
\frac{\dd M(t)}{\dd t}&=\int_{0}^{\infty}v\frac{\partial n(v,t)}{\partial t}\,dv:=\text{I}+\text{II},
\end{split}
\end{equation*}
where 
\begin{equation*}
\text{I}\, =\, \underbrace{\int_{0}^{\infty} \int_{0}^{v}\frac12 v\beta(w,v-w)n(w,t)n(v-w,t)\,dw \,dv}_{\text{I}_{1}}-\underbrace{\int_{0}^{\infty} \int_{0}^{\infty}v\beta(v,w)n(v,t)n(w,t)\,dw \,dv}_{\text{I}_{2}}
\end{equation*}
and 
\begin{equation*}
\begin{split}
\text{II}\, =\, \underbrace{\int_{0}^{\infty} \int_{v}^{\infty}vp(v,w)\gamma(w)n(w,t)\,dw\,dv}_{\text{II}_1} - \underbrace{\int_{0}^{\infty}v\gamma(v)n(v,t)\,dv}_{\text{II}_2}
\end{split}
\end{equation*}
represent the change of mass due to aggregation and breakage, respectively, where $\text{I}_1$ and $\text{II}_1$ are the birth terms, and $\text{I}_2$ and $\text{II}_2$ are the corresponding death terms.
It suffices to show that $\text{I}_1=\text{I}_{2}$ and $\text{II}_1=\text{II}_2$.

For aggregation, by the change of variables $u = v - w$ 
, we obtain
\begin{equation*}
\begin{split}
\text{I}_1=\int_{0}^{\infty} \int_{0}^{\infty}\frac12 (w+u)\beta(w,u)n(w,t)n(u,t)\,dw \,du=\text{I}_2,
\end{split}
\end{equation*}
where the second equality follows from the collision symmetry~\eqref{eq:collision_symmetry}.

For breakage, by the mass conservation \eqref{eq:mass_conservation}, we obtain
\begin{equation*}
\begin{split}
\text{II}_2 = \int_{0}^{\infty} \Big(\int_{0}^{v}wp(w,v)\,dw\Big)\gamma(v)n(v,t)\,dv =
\int_{0}^{\infty} \int_{w}^{\infty}wp(w,v)\gamma(v)n(v,t)\,dvdw =
\text{II}_1,
\end{split}
\end{equation*}
where the second equality follows from changing the order of integration.

We remark that if a rapidly growing kernel induces gelation  \cite{leyvraz1981singularities, filbet2004numerical} (loss of mass from the finite-size regime due to transfer into an untracked gel phase), the integrals $\text{I}_1$ and $\text{I}_2$ can diverge, and the cancellation becomes the indeterminate form $\infty - \infty$.  
Throughout this paper, we restrict our attention to bounded or mildly growing kernels for which all relevant integrals remain finite, so the proof is valid.

\end{proof}

\section{Discontinuous Galerkin Method}\label{sec:DG}
In this section, we propose a high-order discontinuous Galerkin method for the population balance equations, which respects their conservation properties.

\subsection{Prerequisites}
We denote by $v_{\max}$ the maximum particle size in the problem under consideration and represent the computational domain as $\mathcal{D}=[0,v_{\max}]$.
Consider a partition $0=v_{\frac12}\leq v_{\frac32}<\cdots<v_{L+\frac12}=v_{\max}$ of the domain.
We denote by $I_i=[v_{i-\frac12}, v_{i+\frac12}]$ the $i$-th cell of the grid, with cell center at $v_{i}=\frac12(v_{i-\frac12}+v_{i+\frac12})$ and cell length $\Delta v_{i}=v_{i+\frac12}-v_{i-\frac12}$, for $i=1,2,\ldots, L$.
Moreover, we define $\Delta v=\min_{1\leq i\leq L}\Delta v_{i}$.

The discontinuous Galerkin space $S_{h}^{k}$ (we assume $k\geq 1$ throughout the paper) is defined on the grid as follows:
\begin{equation}\label{eq:DG_space}
S_{h}^{k}=\big\{\varphi\in L^{2}(\mathcal{D}): \varphi\big|_{I_i}\in\mathcal{P}^{k}(I_{i}), \, 1\leq i\leq L \big\},
\end{equation}
where $\mathcal{P}^{k}(I_i)$ denotes the space of polynomials of degree at most $k$ on the interval $I_i$.
Since functions in the DG space are piecewise-defined and may be discontinuous at cell interfaces $v_{i+\frac12}$, we write the restriction of $\varphi\in S_{h}^{k}$ to $I_i$ as $\varphi^i=\varphi\big|_{I_i}$ for convenience, and adopt the notation
\begin{equation*}
\varphi_{i+\frac12}^{\pm}=\lim_{\epsilon\rightarrow 0^{+}}\varphi(v_{i+\frac12}\pm\epsilon),
\end{equation*}
to denote the right and left limits at each interface.

We adopt the basis of orthogonal polynomials on each interval $I_i$ to construct the DG space $S_{h}^{k}$:
\begin{equation}\label{eq:FEMbasis}
\begin{split}
S_{h}^{k}&=\bigoplus_{i=1}^{L} \text{span}\{{\phi}_j^{i}(v), \, j=0,1,\ldots, k\},
\end{split}
\end{equation}
where $\phi_{j}^{i}(v)=\widehat{\phi}_j(\frac{2(v-v_{i})}{\Delta v_{i}})$ is obtained via an affine mapping of the orthonormal basis functions $\widehat{\phi}_{j}\in\mathcal{P}^{j}(I)$ on the reference interval $I=[-1,1]$.

Below, we present the DG methods for the PBE with growth and nucleation, aggregation, and breakage treated separately. The complete DG scheme for \eqref{eq:PBE} is then obtained by summing their respective contributions.

\subsection{Growth and nucleation}

We consider the PBE with growth and nucleation:
\begin{equation}\label{eq:PBE_growth_nucleation}
\frac{\partial n(v,t)}{\partial t}+\frac{\partial \big(G(v)n(v,t)\big)}{\partial v} = S(v).
\end{equation}
The standard semi-discrete DG formulation is given as follows: Find $n_h(v,t) \in S_{h}^{k}$ such that, for $1\leq i\leq L$,
\begin{equation}\label{eq:DG_growth_nucleation}
\begin{split}
\int_{I_i}(n_h)_t\varphi dv = & \int_{I_i}G(v) n_h(v,t)\varphi'(v)\,dv+G_{i-\frac12}^{-}(n_h)_{i-\frac12}^{-}\varphi_{i-\frac12}^{+}-G_{i+\frac12}^{-}(n_h)_{i+\frac12}^{-}\varphi_{i+\frac12}^{-}\\
& + \int_{I_i}S(v)\varphi(v)\,dv\quad \forall\,\varphi\in \mathcal{P}^{k}(I_i),    
\end{split}
\end{equation}
where $(n_h)_{\frac12}^{-} := n(0,t)$.
The upwind flux is adopted for the growth term, as the volume growth is monotonic.

In practice, the integrals on the right-hand side of \eqref{eq:DG_growth_nucleation} are evaluated using numerical quadrature.
For an integral of function $f$ on the interval $I_i$, we employ the Gauss-Lobatto quadrature rule:
\begin{equation}\label{eq:1D_quad}
\mathlarger{\mathcal{Q}}_{I_i}\big[f(u)\big]:=\Delta v_i\sum_{\alpha=1}^{N_G}\varpi_{\alpha}^{G}f(u_{\alpha}^{G}),
\end{equation}
where $\mathcal{Q}_{I_i}$ is the quadrature operator on $I_i$, $N_G$ is the number of quadrature points, $u_{\alpha}^{G}$ are the Gauss-Lobatto points, and $\varpi_{\alpha}^{G}$ are the corresponding weights.
We choose $N_G\geq k+2$ to ensure adequate algebraic accuracy \cite{RKDG4}.
Note that the Gauss–Lobatto points include the endpoints of the interval, and the quadrature weights are strictly positive.

The integrals in \eqref{eq:DG_growth_nucleation} can then be evaluated as:
\begin{equation}\label{eq:growth_quadrature}
\int_{I_i}G(v) n_h(v,t)\varphi'(v)\,dv\tildeminus \mathlarger{\mathcal{Q}}_{I_i}\big[G(v) n_h(v,t)\varphi'(v)\big],
\end{equation}
and
\begin{equation}\label{eq:nucleation_quadrature}
\int_{I_i}S(v)\varphi(v)\,dv \tildeminus  \mathlarger{\mathcal{Q}}_{I_i}\big[S(v)\varphi(v)\big].
\end{equation}
The scheme defined by \eqref{eq:DG_growth_nucleation}, \eqref{eq:growth_quadrature}, and \eqref{eq:nucleation_quadrature} constitutes the DG method for \eqref{eq:PBE_growth_nucleation}.

It is clear that the DG scheme for growth is both locally and globally conservative with respect to the particle number $N_h(t):=\int_{0}^{v_{\max}}n_h(v,t)\,dv$ in the absence of nucleation, which can be obtained by choosing $\varphi$ as piecewise constants.

\subsection{Aggregation}
We consider the PBE with aggregation:
\begin{equation}\label{eq:PBE_aggregation}
\frac{\partial n(v,t)}{\partial t} = \frac12\int_{0}^{v}\beta(w,v-w)n(w,t)n(v-w,t)\,dw - \int_{0}^{\infty}\beta(v,w)n(v,t)n(w,t)\,dw.
\end{equation}
The semi-discrete DG formulation is given as follows: Find $n_h(v,t)\in S_{h}^{k}$ such that, for $1\leq i\leq L$,
\begin{equation}\label{eq:DG_aggregation}
\begin{split}
\int_{I_i}(n_h)_t\varphi \,dv=& \frac12\iint_{A_i}\varphi(w+u)\beta(w,u)n_h(w,t)n_h(u,t)\,dwdu \\
&- \iint_{B_i}\varphi(u)\beta(u,w)n_h(u,t)n_h(w,t)\,dwdu\qquad \forall\,\varphi\in \mathcal{P}^{k}(I_i),
\end{split}
\end{equation}
where 
\begin{equation}\label{eq:partition_Ai}
A_i=\{(u,w)\in\mathbb{R}^{+}\times\mathbb{R}^{+}: v_{i-\frac12}\leq u+w\leq v_{i+\frac12}\},\quad \text{for}\: i=1,2,\ldots, L,
\end{equation}
and
\begin{equation}\label{eq:partition_Bi}
B_{i}=\{(u,w)\in\mathbb{R}^{+}\times\mathbb{R}^{+}:v_{i-\frac12}\leq u\leq v_{i+\frac12}, u+w\leq v_{\max}\},\quad \text{for}\: i=1,2,\ldots, L,
\end{equation}
are the integration regions for the birth and death terms of aggregation, respectively (see an illustration in Figure~\ref{fig:Ai_Bi_regions}).
Here, the integration region $A_{i}$ is obtained via the change of variables $u=v-w$ in the double integral $\frac12\int_{v_{i-\frac12}}^{v_{i+\frac12}}\int_{0}^{v}\beta(w,v-w)n_h(w,t)n_h(v-w,t)\,dw dv$, following the same steps as in the proof of Proposition \ref{prop:mass_conservation}.
\begin{figure}[htbp]
  \centering

  \begin{subfigure}[t]{0.45\textwidth}
    \centering
    \begin{tikzpicture}[scale=1.1]
      \draw[->] (0,0) -- (3.6,0) node[right] {$u$};
      \draw[->] (0,0) -- (0,3.6) node[above] {$w$};

      \path[fill=gray!25]
        (0,\viplus) -- (\viplus,0) --
        (\vimin,0)  -- (0,\vimin)  -- cycle;

      \draw (0,\vimin)  -- (\vimin,0)
            node[below right=-1pt] {};
      \draw (0,\viplus) -- (\viplus,0)
            node[below right=-1pt] {};

      \draw[dashed] (0,\vmax) -- (\vmax,0);

      \node[below] at (\vimin,0)  {$v_{i-\frac12}$};
      \node[below] at (\viplus,0) {$v_{i+\frac12}$};
      \node[below] at (\vmax,0) {$v_{\max}$};
      \node[left]  at (0,\vimin)  {$v_{i-\frac12}$};
      \node[left]  at (0,\viplus) {$v_{i+\frac12}$};
      \node[left]  at (0,\vmax) {$v_{\max}$};
      
    \end{tikzpicture}
    \caption{Region $A_i$ for the birth term}
  \end{subfigure}
  \hfill
  \begin{subfigure}[t]{0.45\textwidth}
    \centering
    \begin{tikzpicture}[scale=1.1]
      \draw[->] (0,0) -- (3.6,0) node[right] {$u$};
      \draw[->] (0,0) -- (0,3.6) node[above] {$w$};

      \path[fill=gray!25]
        (\vimin,0) --
        (\viplus,0) --
        (\viplus,{\vmax-\viplus}) --
        (\vimin,{\vmax-\vimin}) -- cycle;

      \draw (\vimin,0) -- (\vimin,2.0);
      \draw (\viplus,0) -- (\viplus,1.0);

      \draw[dashed] (0,\vmax) -- (\vmax,0);

      \node[below] at (\vimin,0)  {$v_{i-\frac12}$};
      \node[below] at (\viplus,0) {$v_{i+\frac12}$};
      \node[below] at (\vmax,0)  {$v_{\max}$};
      \node[left] at (0,\vmax) {$v_{\max}$};
      
    \end{tikzpicture}
    \caption{Region $B_i$ for the death term}
  \end{subfigure}

  \caption{Integration regions on $\Omega^a$ used in the DG formulation \eqref{eq:DG_aggregation} for aggregation.}
  \label{fig:Ai_Bi_regions}
\end{figure}
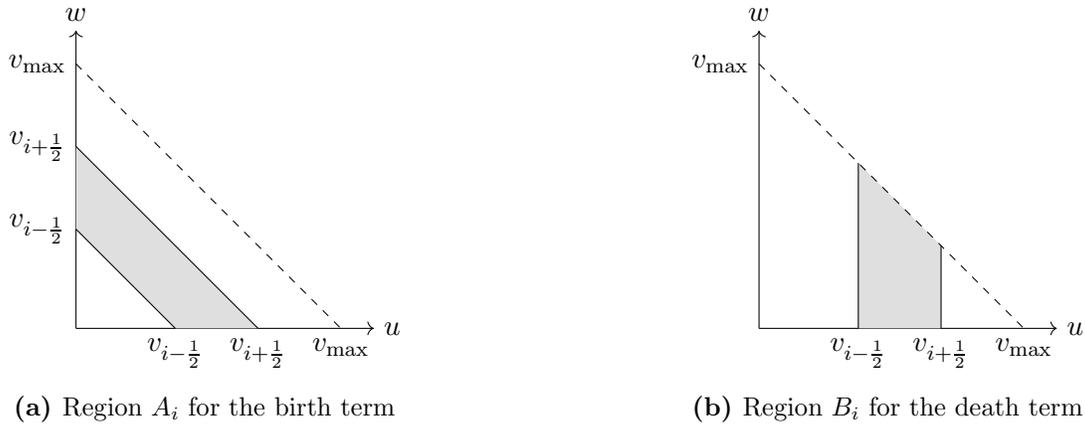

Since the aggregation kernel $\beta(w,u)$ is generally non-polynomial (see \eqref{eq:kernel_free}–\eqref{eq:kernel_gravity}), the integrals on the right-hand side of \eqref{eq:DG_aggregation} are evaluated using numerical quadrature in practice, which introduces truncation errors.
This gives rise to an issue of mass conservation, as the birth and death terms may not cancel exactly when summed over the entire aggregation region 
\begin{equation}\label{eq:region_aggr}
\Omega^a = \{(u,w)\in\mathbb{R}^{+}\times\mathbb{R}^{+} : u+w\leq v_{\max}\}    
\end{equation}
In this work, we extend and refine the idea of the aggregation map from \cite{liu2019conservative, o2022conservative, sewerin2023efficient} to discontinuous Galerkin methods to address this problem.

We define the regions 
\begin{equation}\label{eq:partition_Bi'}
B'_{i}=\{(u,w)\in\mathbb{R}^{+}\times\mathbb{R}^{+}:v_{i-\frac12}\leq w\leq v_{i+\frac12}, u+w\leq v_{\max}\},\quad \text{for}\: i=1,2,\ldots, L,
\end{equation}
according to the grid of $\mathcal{D}$ along the $w$-axis.
Given the partitions $\Omega^a=\cup_{i=1}^{L}A_{i}=\cup_{i=1}^{L}B_{i}=\cup_{i=1}^{L}B'_{i}$, we say that $\mathcal{T}$ is a \emph{common refinement} of partitions $\{A_{i}\}_{i=1}^{L}$, $\{B_{i}\}_{i=1}^{L}$, and $\{B'_{i}\}_{i=1}^{L}$ if $\mathcal{T}$ is a partition of $\Omega^a$ and, for every $T\in\mathcal{T}$, there exist indices $i, j$, and $k$ such that
\begin{equation*}
T\subseteq A_{i}\cap B_{j}\cap B'_{k}.
\end{equation*}
For instance, the coarsest common refinement of partitions $\{A_{i}\}_{i=1}^{L}$, $\{B_{i}\}_{i=1}^{L}$, and $\{B'_{i}\}_{i=1}^{L}$ is given by 
\begin{equation}
\mathcal{T}^0=\{C_{i,j,k}=A_{i}\cap B_{j}\cap B'_{k}\}_{i,j,k=1}^{L}.
\end{equation}

The elements of $\mathcal{T}^{0}$ may consist of triangles, quadrilaterals, pentagons, and even hexagons.
An example of $\mathcal{T}^{0}$ for a nonuniform grid of $\mathcal{D}$ is shown in Figure \ref{fig:Partition_aggr} (a).
To enable uniform treatment of elements in $\mathcal{T}$, our algorithm applies constrained Delaunay triangulation (CDT) to $\mathcal{T}^{0}$, enforcing \emph{$u$--$w$ symmetry}, to produce a triangular partition $\mathcal{T}$ that respects both the original edges in $\mathcal{T}^{0}$ and the diagonal $u=w$; see Figure \ref{fig:Partition_aggr} (b) for an illustration.
\begin{figure}[!htbp]
 \centering
 \begin{subfigure}[b]{0.40\textwidth}
  \includegraphics[width=\textwidth]{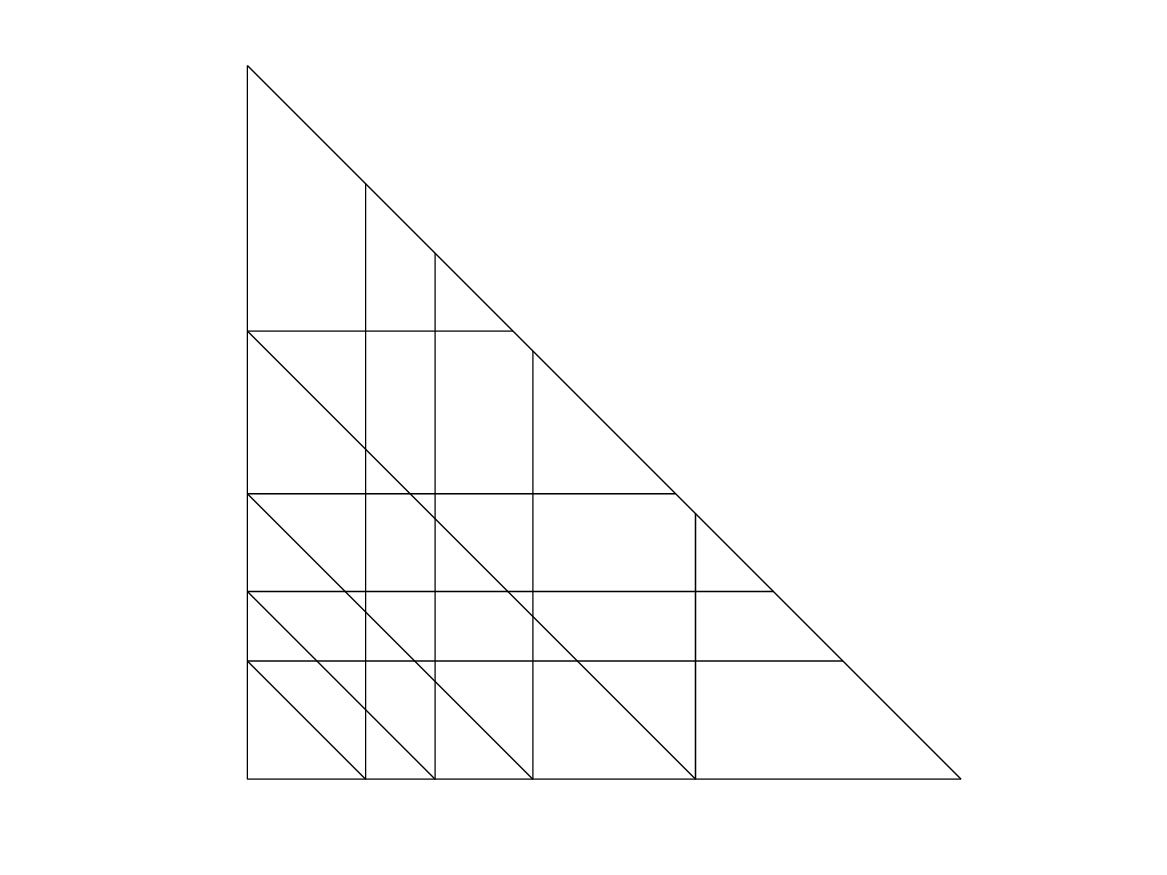}
  \caption{Coarsest common refinement $\mathcal{T}^0$}
 \end{subfigure}
 \begin{subfigure}[b]{0.40\textwidth}
  \includegraphics[width=\textwidth]{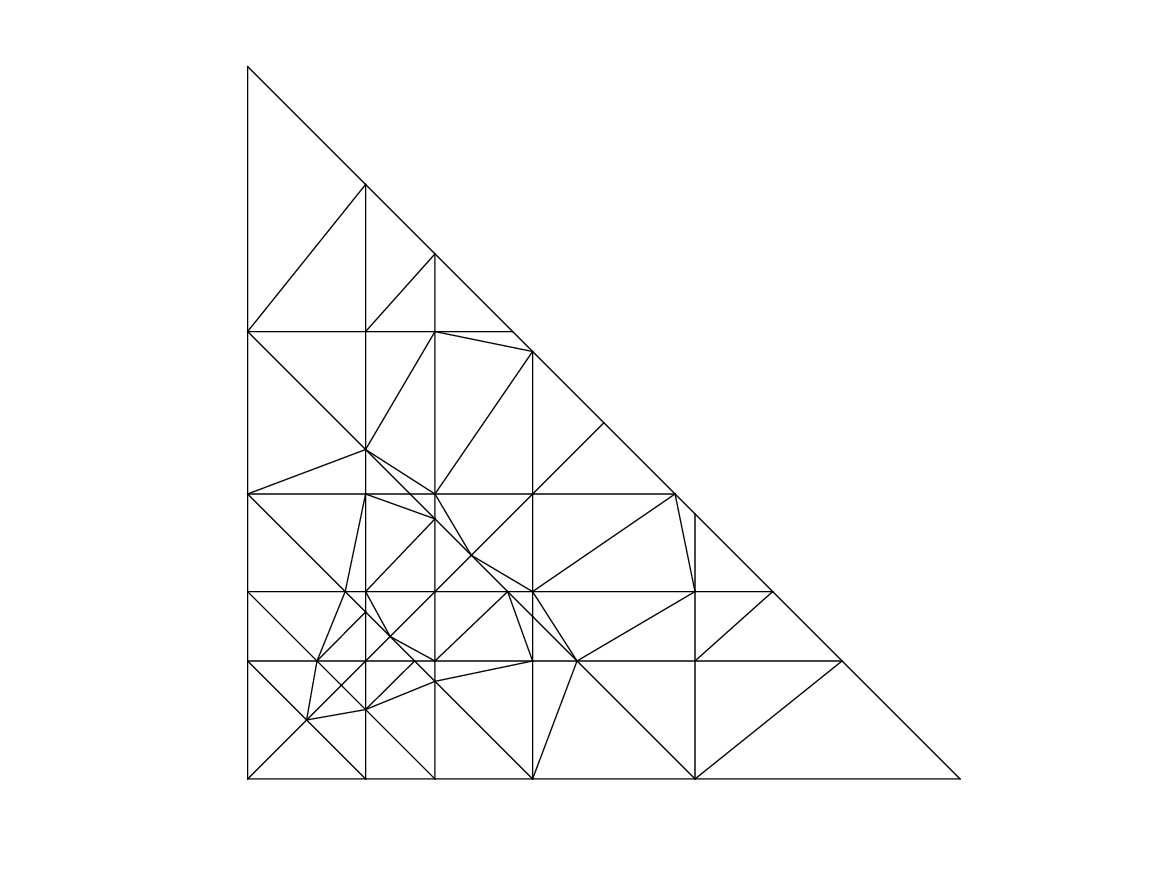}
  \caption{Triangular common refinement $\mathcal{T}$}
 \end{subfigure} 
 \caption{Example of common refinements for the partitions \eqref{eq:partition_Ai}, \eqref{eq:partition_Bi}, and \eqref{eq:partition_Bi'} of the aggregation region. $\mathcal{T}^0$ is the coarsest common refinement, and $\mathcal{T}$ is a refinement obtained using constrained Delaunay triangulation (CDT) applied to $\mathcal{T}^0$ with $u$--$w$ symmetry, which is quadrature-friendly.}
 \label{fig:Partition_aggr}
\end{figure}
On each triangular element $T\in\mathcal{T}$, we adopt the quadrature rules developed in \cite{wandzurat2003symmetric}, which exhibit $D_3$-symmetry, have positive weights, and include no points outside the triangle, to compute double integrals $\iint_{T}f(u,w)\,dudw$:
\begin{equation}\label{eq:tri_quad}
{\mathlarger{\mathcal{Q}}}_{T}\big[f(u,w)\big]:=|T|\sum_{\gamma=1}^{N_{Q}}\varpi_{\gamma} f(u_{\gamma}, w_{\gamma}),
\end{equation}
where $\mathcal{Q}_T$ is the quadrature operator on $T$, $f$ is a function on $T$, $|T|$ denotes the area of $T$, $N_Q$ is the number of quadrature points, $(u_{\gamma},w_{\gamma})$ are the quadrature points, and $\varpi_\gamma$ are the corresponding weights.
Here, the $D_3$-symmetry refers to the property that a quadrature rule on an equilateral triangle is invariant under reflections across the three medians and rotations about the center by $\frac{2\pi}{3}$ and $\frac{4\pi}{3}$.

The double integrals in \eqref{eq:DG_aggregation} can then be evaluated as:
\begin{equation}\label{eq:aggr_birth_quadrature}
\iint_{A_i}\varphi(w+u)\beta(w,u)n_h(w,t)n_h(u,t)\,dwdu \tildeminus\sum_{\substack{T \in \mathcal{T} \\ T \subseteq A_i}}\mathlarger{\mathcal{Q}}_T\big[\varphi(w+u)\beta(w,u)n_h(w,t)n_h(u,t) \big],
\end{equation}
and 
\begin{equation}\label{eq:aggr_death_quadrature}
\iint_{B_i}\varphi(u)\beta(u,w)n_h(u,t)n_h(w,t)\,dwdu\tildeminus\sum_{\substack{T \in \mathcal{T} \\ T \subseteq B_i}}\mathlarger{\mathcal{Q}}_T\big[\varphi(u)\beta(u,w)n_h(u,t)n_h(w,t)\big].
\end{equation}
The scheme defined by \eqref{eq:DG_aggregation}, \eqref{eq:aggr_birth_quadrature}, and \eqref{eq:aggr_death_quadrature} constitutes the DG method for \eqref{eq:PBE_aggregation}.
\begin{prop}\label{prop:aggregation_conservation}
    The total particle mass is conserved in the DG scheme \eqref{eq:DG_aggregation}, \eqref{eq:aggr_birth_quadrature}, and \eqref{eq:aggr_death_quadrature} for aggregation; that is, the particle mass $M_h(t) := \int_{0}^{v_{\max}}vn_h(v,t)\,dv$ remains constant over time.
\end{prop}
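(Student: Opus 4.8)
The plan is to test the DG formulation \eqref{eq:DG_aggregation} against the test function $\varphi(v)=v$ and sum over all cells. Since $k\geq 1$, the linear function $v\mapsto v$ restricted to any cell $I_i$ lies in $\mathcal{P}^{k}(I_i)$, so it is admissible. Choosing $\varphi^{i}(v)=v$ on each cell gives $\sum_{i=1}^{L}\int_{I_i}(n_h)_t\,v\,\dd v=\frac{\dd M_h}{\dd t}$, so it suffices to show that the right-hand side of the fully discrete (quadrature-based) scheme vanishes when summed over $i$. Because $w+u\in I_i$ on $A_i$ and $u\in I_i$ on $B_i$, the piecewise test function evaluates to $\varphi(w+u)=w+u$ and $\varphi(u)=u$, respectively; this mirrors the roles of $\text{I}_1$ and $\text{I}_2$ in the proof of Proposition~\ref{prop:mass_conservation}.

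Next I would exploit the common-refinement property. Since $\mathcal{T}$ refines both $\{A_i\}$ and $\{B_i\}$, each triangle $T\in\mathcal{T}$ lies in exactly one $A_i$ and exactly one $B_j$; hence summing the birth quadrature \eqref{eq:aggr_birth_quadrature} over $i$ collapses to a single sum over all $T\in\mathcal{T}$, and likewise for the death quadrature \eqref{eq:aggr_death_quadrature}. Writing $g(u,w):=\beta(u,w)\,n_h(u,t)\,n_h(w,t)$, which is symmetric in $(u,w)$ by the collision symmetry \eqref{eq:collision_symmetry}, the mass balance reduces to
\[
\tfrac12\sum_{T\in\mathcal{T}}\mathlarger{\mathcal{Q}}_T\big[(u+w)\,g(u,w)\big]=\sum_{T\in\mathcal{T}}\mathlarger{\mathcal{Q}}_T\big[u\,g(u,w)\big],
\]
that is, to proving the discrete identity $\sum_{T}\mathlarger{\mathcal{Q}}_T[u\,g]=\sum_{T}\mathlarger{\mathcal{Q}}_T[w\,g]$.

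The crux is a discrete analogue of the change of variables $u\leftrightarrow w$ used in the continuous proof. Let $\sigma$ denote the reflection $(u,w)\mapsto(w,u)$, and let $\mu=\sum_{T\in\mathcal{T}}\sum_{\gamma}|T|\varpi_{\gamma}\,\delta_{(u_\gamma,w_\gamma)}$ be the total weighted quadrature-node measure. I would argue that $\mu$ is $\sigma$-invariant: the CDT is constructed to respect the diagonal $u=w$ as a constrained edge and to enforce $u$–$w$ symmetry, so no triangle straddles the diagonal and the triangles of $\mathcal{T}$ occur in mirror pairs $(T,\sigma(T))$; the $D_3$-symmetry of the rule \eqref{eq:tri_quad} (invariance of its node set and weights under permutations of the triangle's vertices, in particular the transposition realizing the reflection) guarantees that the nodes and weights on $\sigma(T)$ are exactly the $\sigma$-images of those on $T$. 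Consequently $\sigma_*\mu=\mu$, and since $g\circ\sigma=g$, applying $\int F\,\dd\mu=\int (F\circ\sigma)\,\dd\mu$ with $F(u,w)=u\,g(u,w)$ yields $\int u\,g\,\dd\mu=\int w\,g\,\dd\mu$. The birth sum then equals $\tfrac12(\int u\,g\,\dd\mu+\int w\,g\,\dd\mu)=\int u\,g\,\dd\mu$, the death sum, giving $\frac{\dd M_h}{\dd t}=0$.

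The main obstacle is this last step — establishing $\sigma_*\mu=\mu$ at the fully discrete level. It is not enough that the continuous integrands are symmetric; the \emph{quadrature} must reproduce the reflection symmetry exactly, which is precisely why the construction insists on (i) a triangulation symmetric about $u=w$ with the diagonal as a constrained edge, and (ii) a $D_3$-symmetric rule whose nodes and weights are invariant under relabeling of the triangle's vertices. I would treat with care the triangles having an edge on the diagonal, verifying that reflection maps each mirror pair's node sets onto one another so that no spurious diagonal contribution survives to break the cancellation.
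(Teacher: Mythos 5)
Your proposal is correct and follows essentially the same route as the paper's proof: test \eqref{eq:DG_aggregation} with $\varphi(v)=v$, sum over cells so the birth and death quadratures collapse to sums over all $T\in\mathcal{T}$, and cancel them using the $u$--$w$ symmetry of the integrand, the mirror symmetry of the triangulation about the diagonal, and the symmetry of the quadrature rule. Your pushforward-measure formalization ($\sigma_*\mu=\mu$) simply spells out in detail the step the paper compresses into the phrase ``follows from the $u$--$w$ symmetry of the integrand, the partition $\mathcal{T}$, and the quadrature rule.''
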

\begin{proof}
We take the test function $\varphi(v) = v$ in the DG formulation \eqref{eq:DG_aggregation} and sum the equations over all $i = 1, 2, \ldots, L$ to obtain
\begin{equation*}
\begin{split}
&\frac{\dd M_{h}(t)}{\dd t}=\sum_{i=1}^{L}\int_{I_i}(n_h)_t v\,dv\\
&=\frac12\sum_{T\in \mathcal{T}}\mathlarger{\mathcal{Q}}_T\big[(w+u)\beta(w,u)n_h(w,t)n_h(u,t) \big]-\sum_{T\in \mathcal{T}}\mathlarger{\mathcal{Q}}_T\big[u\beta(u,w)n_h(u,t)n_h(w,t)\big]\\
&=\sum_{T\in \mathcal{T}}\mathlarger{\mathcal{Q}}_T\big[u\beta(w,u)n_h(w,t)n_h(u,t) \big]-\sum_{T\in \mathcal{T}}\mathlarger{\mathcal{Q}}_T\big[u\beta(u,w)n_h(u,t)n_h(w,t)\big]\\
&=0,
\end{split}
\end{equation*}
where the third equality follows from the $u$--$w$ symmetry of the integrand, the partition $\mathcal{T}$, and the quadrature rule.
\end{proof}

\subsection{Breakage}
We consider the PBE with breakage:
\begin{equation}\label{eq:PBE_breakage}
\frac{\partial n(v,t)}{\partial t} =  \int_{v}^{\infty}p(v,w)\gamma(w)n(w,t)\,dw -\gamma(v)n(v,t).
\end{equation}
The semi-discrete DG formulation is given as follows: Find $n_h(v,t)\in S_{h}^{k}$ such that, for $1\leq i\leq L$,
\begin{equation}\label{eq:DG_breakage}
\begin{split}
\int_{I_i}(n_h)_t\varphi \,dv=&\iint_{C_{i}}\varphi(u)p(u,w)\gamma(w)n_h(w,t)\,dwdu - \iint_{D_{i}} u p(u,w)\gamma(w)n_h(w,t)\,dwdu\\
&-\int_{I_i}\Big(\varphi(v)-v\Big)\gamma(v)n_h(v,t)dv\qquad \forall\,\varphi\in \mathcal{P}^{k}(I_i),
\end{split}
\end{equation}
where 
\begin{equation}\label{eq:partition_Ci}
C_{i}=\{(u,w)\in\mathbb{R}^{+}\times\mathbb{R}^{+}: v_{i-\frac12}\leq u \leq v_{i+\frac12}, u\leq w\leq v_{\max}\}
\end{equation}
and 
\begin{equation}\label{eq:partition_Di}
D_{i}=\{(u,w)\in\mathbb{R}^{+}\times\mathbb{R}^{+}: v_{i-\frac12}\leq w\leq v_{i+\frac12}, 0\leq u\leq w\}
\end{equation}
are the integration regions for the birth and (partial) death terms of breakage, respectively (see an illustration in Figure \ref{fig:Ci_Di_regions}).
Here, the integration region $D_{i}$ is derived using the mass conservation \eqref{eq:mass_conservation} in $\int_{I_i}w\gamma(w)n_h(w,t)\,dw$, which leads to the double integral $\int_{v_{i-\frac12}}^{v_{i+\frac12}}\big(\int_{0}^{w}up(u,w)\,du\big)\gamma(w)n_h(w,t)\,dw$, following the same steps as in the proof of Proposition \ref{prop:mass_conservation}.

\begin{figure}[htbp]
  \centering

  \begin{subfigure}[t]{0.45\textwidth}
    \centering
    \begin{tikzpicture}[scale=1.1]
      \draw[->] (0,0) -- (3.6,0) node[right] {$u$};
      \draw[->] (0,0) -- (0,3.6) node[above] {$w$};

      \path[fill=gray!25]
        (\vimin,\vimin) --
        (\vimin,\vmax) --
        (\viplus,\vmax) --
        (\viplus,\viplus) -- cycle;

      \draw[dashed] (0,0) -- (\vmax,\vmax);   
      \draw[dashed] (0,\vmax) -- (\vmax, \vmax); 

      \draw (\vimin,\vimin) -- (\vimin,\vmax);
      \draw (\viplus,\viplus) -- (\viplus,\vmax);
      \draw (\vimin,-0.1) -- (\vimin,0.1);
      \draw (\viplus,-0.1) -- (\viplus,0.1);
      \node[below] at (\vimin,0)  {$v_{i-\frac12}$};
      \node[below] at (\viplus,0) {$v_{i+\frac12}$};
      \node[left]  at (0,\vmax)   {$v_{\max}$};
      \node[left] at (0,\vimin)  {$v_{i-\frac12}$};
      \node[left] at (0,\viplus) {$v_{i+\frac12}$};
      \draw (-0.1,\vimin) -- (0.1,\vimin);
      \draw (-0.1,\viplus) -- (0.1,\viplus);      
    \end{tikzpicture}
    \caption{Region $C_i$ for the birth term}
  \end{subfigure}
  \hfill
  \begin{subfigure}[t]{0.45\textwidth}
    \centering
    \begin{tikzpicture}[scale=1.1]
      \draw[->] (0,0) -- (3.6,0) node[right] {$u$};
      \draw[->] (0,0) -- (0,3.6) node[above] {$w$};

      \path[fill=gray!25]
        (0,\vimin) --
        (0,\viplus) --
        (\viplus,\viplus) --
        (\vimin,\vimin) -- cycle;

      \draw[dashed] (0,0) -- (\vmax,\vmax);     
      \draw[dashed] (0,\vmax) -- (\vmax,\vmax); 

      \draw (0,\vimin) -- (\vimin,\vimin);
      \draw (0,\viplus) -- (\viplus,\viplus);

      \node[left] at (0,\vimin)  {$v_{i-\frac12}$};
      \node[left] at (0,\viplus) {$v_{i+\frac12}$};
      \node[left] at (0,\vmax)   {$v_{\max}$};
      \draw (\vimin,-0.1) -- (\vimin,0.1);
      \draw (\viplus,-0.1) -- (\viplus,0.1);
      \node[below] at (\vimin,0)  {$v_{i-\frac12}$};
      \node[below] at (\viplus,0) {$v_{i+\frac12}$};
      \node[left]  at (0,\vmax)   {$v_{\max}$};      
    \end{tikzpicture}
    \caption{Region $D_{i}$ for the (partial) death term}
  \end{subfigure}

  \caption{Integration regions on $\Omega^{b}$ used in the DG formulation \eqref{eq:DG_breakage} for breakage.}           
  \label{fig:Ci_Di_regions}
\end{figure}
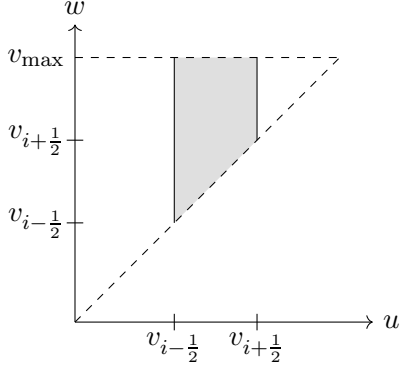
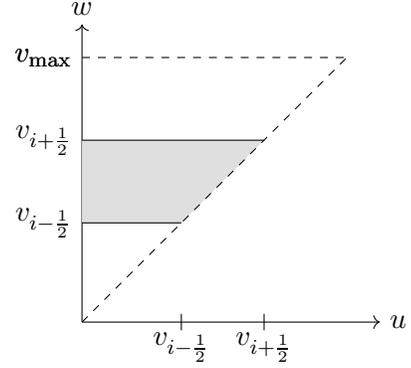

The integrals on the right-hand side of \eqref{eq:DG_breakage} are evaluated using numerical quadratures.
Similar to the aggregation term, special quadratures are needed to ensure that the birth and death terms cancel exactly when summed over the entire breakage region
\begin{equation}\label{eq:region_break}
\Omega^{b}=\{(u,w)\in\mathbb{R}^{+}\times\mathbb{R}^{+}: 0\leq u\leq w\leq v_{\max}\},
\end{equation}
in order to preserve mass conservation.

\begin{figure}[!htbp]
 \centering
  \includegraphics[width=0.4\textwidth]{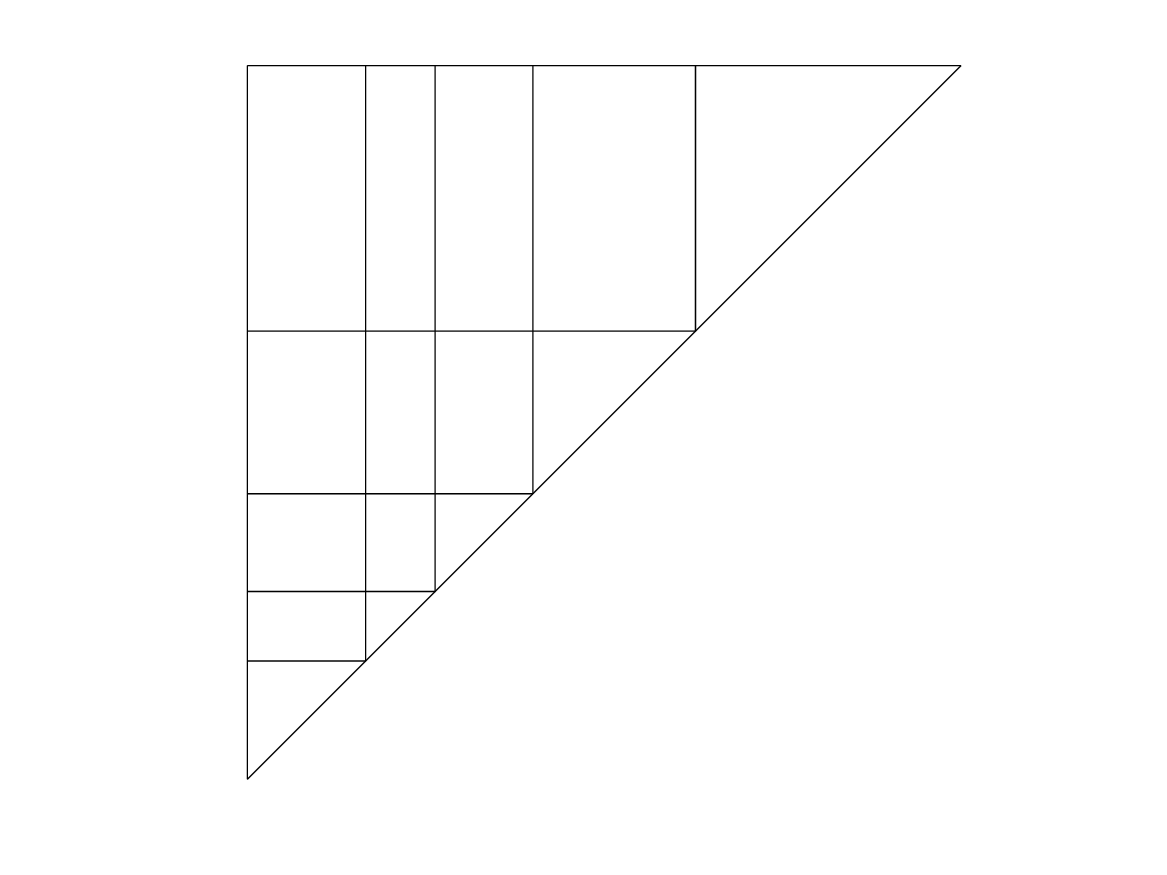}
 \caption{The coarsest common refinement $\mathcal{E}^0$ for the partitions \eqref{eq:partition_Ci} and \eqref{eq:partition_Di} of the breakge region $\Omega^{b}$.}
 \label{fig:Partition_break}
\end{figure}

We denote by $\mathcal{E}^0=\{E_{i,j}=C_{i}\cap D_{j}\}_{i,j=1}^{L}$ the coarsest common refinement of the partitions \eqref{eq:partition_Ci} and \eqref{eq:partition_Di} for the breakage region $\Omega^{b}$; see Figure \ref{fig:Partition_break} for an illustration.
There are two types of elements in $\mathcal{E}^{0}$: triangles and rectangles.
On triangular elements $E\in\mathcal{E}^0$, we adopt the quadrature rule \eqref{eq:tri_quad}.
If $E\in\mathcal{E}^0$ is a rectangle, the tensor product of Gauss-Lobatto quadrature rules is employed:
\begin{equation}\label{eq:rect_quad}
\mathlarger{\mathcal{Q}}_{E}\big[f(u,w)\big]:=|E|\sum_{\alpha=1}^{N_{G}}\sum_{\beta=1}^{N_{G}}\varpi_{\alpha}^{G}\varpi_{\beta}^{G}f(u_{\alpha}^{G},w_{\beta}^{G}),
\end{equation}
where $\mathcal{Q}_{E}$ is the quadrature operator on $E$, $f$ is a function on $E$, $|E|$ denotes the area of $E$, $N_{G}$ is the number of Gauss-Lobatto quadrature points in each dimension, $u_{\alpha}^{G}$ and $w_{\beta}^{G}$ are the one-dimensional Gauss-Lobatto quadrature points, and $\varpi_{\alpha}^{G}$ and $\varpi_{\beta}^{G}$ are the corresponding weights.

The integrals in \eqref{eq:DG_breakage} can then be evaluated as:
\begin{equation}\label{eq:break_birth_quadrature}
\iint_{C_{i}}\varphi(u)p(u,w)\gamma(w)n_h(w,t)\,dwdu \tildeminus \sum_{\substack{E \in \mathcal{E}^0 \\ E \subseteq C_i}} \mathlarger{\mathcal{Q}}_{E}\big[ \varphi(u)p(u,w)\gamma(w)n_h(w,t) \big],
\end{equation}
\begin{equation}\label{eq:break_death_quadrature1}
\iint_{D_{i}} u p(u,w)\gamma(w)n_h(w,t)\,dwdu \tildeminus \sum_{\substack{E \in \mathcal{E}^0 \\ E \subseteq D_i}} \mathlarger{\mathcal{Q}}_{E}\big[u p(u,w)\gamma(w)n_h(w,t)\big],
\end{equation}
and
\begin{equation}\label{eq:break_death_quadrature2}
\int_{I_i}\Big(\varphi(v)-v\Big)\gamma(v)n_h(v,t)dv \tildeminus \mathlarger{\mathcal{Q}}_{I_i}\big[ \big(\varphi(v)-v\big)\gamma(v)n_h(v,t) \big]
\end{equation}
The scheme defined by \eqref{eq:DG_breakage} and \eqref{eq:break_birth_quadrature} -- \eqref{eq:break_death_quadrature2} constitutes the DG method for \eqref{eq:PBE_breakage}.
\begin{prop}\label{prop:breakge_conservation}
    The total particle mass is conserved in the DG scheme \eqref{eq:DG_breakage} and \eqref{eq:break_birth_quadrature} -- \eqref{eq:break_death_quadrature2} for breakage; that is, the particle mass $M_h(t) := \int_{0}^{v_{\max}}vn_h(v,t)\,dv$ remains constant over time.
\end{prop}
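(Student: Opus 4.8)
The plan is to mirror the proof of Proposition~\ref{prop:aggregation_conservation}: choose the test function $\varphi(v)=v$ in the DG formulation~\eqref{eq:DG_breakage}, sum the resulting equations over all cells $i=1,\ldots,L$, and show that the discretized birth and death contributions cancel element by element over the common refinement $\mathcal{E}^0$. With $\varphi(v)=v$ we have $\frac{\dd M_h(t)}{\dd t}=\sum_{i=1}^{L}\int_{I_i}(n_h)_t\,v\,dv$, so it suffices to show that the right-hand side of~\eqref{eq:DG_breakage}, summed over $i$, vanishes for this choice of $\varphi$.

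First I would dispose of the local term~\eqref{eq:break_death_quadrature2}. Setting $\varphi(v)=v$ makes the factor $\varphi(v)-v$ vanish identically, so this term contributes nothing regardless of the quadrature $\mathcal{Q}_{I_i}$. This is precisely why the death term was split into a nonlocal piece over $D_i$ (obtained from the mass-conservation identity~\eqref{eq:mass_conservation}) and a local correction proportional to $\varphi(v)-v$: the splitting was engineered so that the mass test function annihilates the local remainder exactly, leaving only the two double integrals to reconcile.

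Next I would compare the two surviving double integrals. After substituting $\varphi(u)=u$, the birth integrand in~\eqref{eq:break_birth_quadrature} becomes $u\,p(u,w)\gamma(w)n_h(w,t)$, which is exactly the death integrand in~\eqref{eq:break_death_quadrature1}. The key structural point is that $\mathcal{E}^0=\{E_{i,j}=C_i\cap D_j\}$ is a common refinement of both partitions $\{C_i\}$ and $\{D_i\}$ of $\Omega^b$: summing the birth quadrature over all $i$ enumerates each element $E\in\mathcal{E}^0$ exactly once, through the unique index $i$ with $E\subseteq C_i$, and likewise the death quadrature summed over all $i$ enumerates each $E$ exactly once through the unique index with $E\subseteq D_i$. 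Hence both sums equal $\sum_{E\in\mathcal{E}^0}\mathcal{Q}_E\big[u\,p(u,w)\gamma(w)n_h(w,t)\big]$ --- the same element, the same quadrature rule~\eqref{eq:tri_quad} or~\eqref{eq:rect_quad}, the same integrand --- and they cancel, yielding $\frac{\dd M_h(t)}{\dd t}=0$.

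The computation is short, and the only point requiring genuine care is the quadrature bookkeeping over the common refinement. Unlike the aggregation case in Proposition~\ref{prop:aggregation_conservation}, which relied on the $u$--$w$ symmetry of the integrand to match birth and death, here the two integrands are literally identical once $\varphi(u)=u$; the cancellation therefore requires no symmetry of $p$ or $\gamma$, only that each element of $\mathcal{E}^0$ is assigned the same quadrature rule in both the birth and death sums. I expect the main (albeit mild) obstacle to be articulating cleanly that this common-refinement structure forces the element-wise quadrature values to coincide, so that the cancellation holds \emph{exactly} at the discrete level rather than merely up to truncation error.
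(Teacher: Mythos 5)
Your proposal is correct and follows essentially the same argument as the paper: take $\varphi(v)=v$, note that the local term vanishes since $\varphi(v)-v\equiv 0$, and observe that the birth and death quadrature sums over the common refinement $\mathcal{E}^0$ are literally the same sum $\sum_{E\in\mathcal{E}^0}\mathcal{Q}_E\big[u\,p(u,w)\gamma(w)n_h(w,t)\big]$, hence cancel exactly. Your additional remarks --- that no symmetry of $p$ or $\gamma$ is needed (in contrast to the aggregation case) and that the cancellation only requires each element of $\mathcal{E}^0$ to carry the same quadrature rule in both sums --- are accurate and consistent with the paper's construction.
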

\begin{proof}
We take the test function $\varphi(v)=v$ in the DG formulation \eqref{eq:DG_breakage} and sum the equations over all $i=1,2,\ldots,L$ to obtain
\begin{equation*}
\begin{split}
&\frac{\dd M_{h}(t)}{\dd t}=\sum_{i=1}^{L}\int_{I_i}(n_h)_t v\,dv\\
&=\sum_{E\in \mathcal{E}^0} \mathlarger{\mathcal{Q}}_{E}\big[ up(u,w)\gamma(w)n_h(w,t) \big] - \sum_{E\in \mathcal{E}^0} \mathlarger{\mathcal{Q}}_{E}\big[u p(u,w)\gamma(w)n_h(w,t)\big] - \sum_{i=1}^{L}\mathlarger{\mathcal{Q}}_{I_i}\big[0\big]\\
&=0.
\end{split}
\end{equation*}
\end{proof}

\subsection{Implementation}
In this subsection, we discuss the implementation details and practical considerations of the semi-discrete DG method, which combines the contributions from \eqref{eq:DG_growth_nucleation}, \eqref{eq:DG_aggregation}, and \eqref{eq:DG_breakage}, for the full form of the PBE \eqref{eq:PBE}:
Find $n_h(v,t) \in S_{h}^{k}$ such that, for $1\leq i\leq L$,
\begin{equation}\label{eq:DG_complete}
\begin{split}
\int_{I_i}(n_h)_t\varphi dv = & \int_{I_i}G(v) n_h(v,t)\varphi'(v)\,dv+G_{i-\frac12}^{-}(n_h)_{i-\frac12}^{-}\varphi_{i-\frac12}^{+}-G_{i+\frac12}^{-}(n_h)_{i+\frac12}^{-}\varphi_{i+\frac12}^{-} + \int_{I_i}S(v)\varphi(v)\,dv\\
&+\frac12\iint_{A_i}\varphi(w+u)\beta(w,u)n_h(w,t)n_h(u,t)\,dwdu - \iint_{B_i}\varphi(u)\beta(u,w)n_h(u,t)n_h(w,t)\,dwdu\\
& +\iint_{C_{i}}\varphi(u)p(u,w)\gamma(w)n_h(w,t)\,dwdu - \iint_{D_{i}} u p(u,w)\gamma(w)n_h(w,t)\,dwdu\\
&-\int_{I_i}\Big(\varphi(v)-v\Big)\gamma(v)n_h(v,t)dv \qquad \forall\,\varphi\in \mathcal{P}^{k}(I_i),
\end{split}
\end{equation}
where the integrals on the right-hand side are evaluated using the numerical quadratures \eqref{eq:growth_quadrature}, \eqref{eq:nucleation_quadrature}, \eqref{eq:aggr_birth_quadrature}, \eqref{eq:aggr_death_quadrature}, \eqref{eq:break_birth_quadrature}, \eqref{eq:break_death_quadrature1} and \eqref{eq:break_death_quadrature2} based on the common refinements $\mathcal{T}$ and $\mathcal{E}^{0}$.
Using the orthogonal basis \eqref{eq:FEMbasis}, the numerical solution of \eqref{eq:DG_complete} can be represented as
\begin{equation}
n_h(v,t)=\bigoplus_{i=1}^{L}\Big(\sum_{j=0}^{k}c_{j}^{i}(t)\phi_{j}^{i}(v)\Big),
\end{equation}
where $c_{j}^{i}$ are the coefficients to be determined.
The test function $\varphi$ in \eqref{eq:DG_complete} can be taken as any basis function $\phi_{j}^{i}$ for $j=0,1,\ldots,k$.

We define the mappings $p, q : \mathcal{T} \rightarrow \mathbb{N}^{+}$ such that $T \subseteq I_{p(T)} \times I_{q(T)}$ for every $T \in \mathcal{T}$. By abuse of notation, we also use $p, q : \mathcal{E}^0 \rightarrow \mathbb{N}^{+}$ so that $E \subseteq I_{p(E)} \times I_{q(E)}$ for every $E \in \mathcal{E}^0$.

We now define the following tensors, matrices, and vectors:
\begin{equation}\label{eq:DG_data}
\begin{split}
&\mathcal{G}^{i}_{j,m}= \mathlarger{\mathcal{Q}}_{I_i}\big[G(v)(\phi_{j}^{i})'(v)\phi_{m}^{i}(v)\big] - G_{i+\frac12}^{-}\widehat{\phi}_{j}(1)\widehat{\phi}_{m}(1),\quad j,m=0,1,\ldots,k,\, 1\leq i\leq L, \\
&\mathcal{G}^{-,i}_{j,m}= G_{i-\frac12}^{-}\widehat{\phi}_{j}(-1)\widehat{\phi}_{m}(1),\quad j,m=0,1,\ldots,k,\,1\leq i\leq L,\\
&\mathcal{N}_{m}^{i}=\mathlarger{\mathcal{Q}}_{I_i}\big[S(v)\phi_{m}^{i}(v)\big],\quad m=0,1,\ldots,k,1\leq i\leq L\\
&\mathcal{A}_{j,m,\ell}^{Birth}(T) = \frac12\mathlarger{\mathcal{Q}}_{T}\big[\phi_{j}^{i}(w+u)\beta(w,u)\phi_{m}^{p(T)}(u)\phi_{\ell}^{q(T)}(w)\big],\quad j,m,\ell=0,1,\ldots,k, \,T\in\mathcal{T},\\
&\mathcal{A}_{j,m,\ell}^{Death}(T)=\mathlarger{\mathcal{Q}}_{T}\big[\phi_{j}^{i}(u)\beta(u,w)\phi_{m}^{p(T)}(u)\phi_{\ell}^{q(T)}(w)\big],\quad j,m,\ell=0,1,\ldots,k,\, T\in\mathcal{T},\\
&\mathcal{B}_{j,m}^{Birth}(E) = \mathlarger{\mathcal{Q}}_{E}\big[\phi_{j}^{i}(u)p(u,w)\gamma (w)\phi_{m}^{q(E)}(w)\big],\quad j,m=0,1,\ldots,k,\, E\in\mathcal{\mathcal{E}}^{0},\\
&\mathcal{B}_{m}^{Death}(E)=\mathlarger{\mathcal{Q}}_{E}\big[up(u,w)\gamma(w)\phi_{m}^{q(E)}(w)\big],\quad m=0,1,\ldots,k,\, E\in\mathcal{E}^0\\
&\mathcal{B}_{j,m}^{i} = \mathlarger{\mathcal{Q}}_{I_i}\big[\Big(\phi_{j}^{i}(v)-v\Big)\gamma(v)\phi_{m}^{i}(v)\big],\quad j,m=0,1,\ldots, k,\, 1\leq i\leq L.
\end{split}
\end{equation}
These quantities remain constant throughout the simulation. Therefore, they should be precomputed once at the beginning of the program and reused in all subsequent time steps to reduce computational cost.
With these definitions, the DG formulation \eqref{eq:DG_complete} can be written in component form as:
\begin{equation}\label{eq:DG_component}
\begin{split}
\frac{\Delta v_i}{2}(c_{j}^{i})_{t} = & \sum_{m=0}^{k}\mathcal{G}_{j,m}^{i}c_{m}^{i} + \sum_{m=0}^{k}\mathcal{G}_{j,m}^{-,i}c_{m}^{i-1} + \mathcal{N}_{j}^{i}\\
&+\sum_{\substack{T\in\mathcal{T}\\ T\subseteq A_{i}}}\sum_{m,\ell=0}^{k}\mathcal{A}_{j,m,\ell}^{Birth}(T)c_{m}^{p(T)}c_{\ell}^{q(T)} - \sum_{\substack{T\in\mathcal{T}\\ T\subseteq B_{i}}}\sum_{m,\ell=0}^{k}\mathcal{A}_{j,m,\ell}^{Death}(T)c_{m}^{p(T)}c_{\ell}^{q(T)}\\
& +\sum_{\substack{E\in\mathcal{E}^{0}\\ E\subseteq C_{i}}}\sum_{m=0}^{k}\mathcal{B}_{j,m}^{Birth}(E)c_{m}^{q(E)} - \sum_{\substack{E\in\mathcal{E}^{0}\\ E\subseteq D_{i}}}\sum_{m=0}^{k}\mathcal{B}_{m}^{Death}(E)c_{m}^{q(E)}-\sum_{m=0}^{k}\mathcal{B}_{j,m}^{i} c_{m}^{i},
\end{split}
\end{equation}
for $1\leq i\leq L$ and $j=0,1, \ldots,k$.

We end this section with a few tips that may help improve computational efficiency.  
For example, in the DG scheme \eqref{eq:DG_aggregation}, we can save computational cost by using the symmetry of the integrand in $u$ and $w$: it is enough to compute the integrals in the region $u \leq w$, and the rest can be obtained by symmetry.  
In addition, when elements $T \in \mathcal{T}$ belong to the same element in $\mathcal{T}^0$, their contributions in \eqref{eq:DG_component} can be merged before use.  
Since our focus is on algorithmic design, we do not elaborate further on implementation details.

\subsection{Time integration}
The previous discussion focuses on the spatial discretization of PBE. 
The resulting system of ordinary differential equations can be written as
\begin{equation}
\mathbf{c}_t=L(\mathbf{c}),
\end{equation}
where $\mathbf{c}=(\cdots,c_{k}^{i-1},c_{0}^{i},\cdots,c_{k}^{i}, c^{i+1}_{0},\cdots)^T$ denotes the vector of degrees of freedom of $n_h(t)$ associated with the chosen set of basis functions \eqref{eq:FEMbasis}, and $L(\mathbf{c})$ is a quadratic function.
In this paper, we adopt the strong-stability-preserving Runge-Kutta (SSP-RK) methods \cite{shu1988efficient, shu1988total, gottlieb2009high, gottlieb2001strong}, which are convex combinations of the forward Euler method, to advance the equation in time.
The third-order SSP-RK scheme \cite{shu1988efficient} is given by
\begin{equation}\label{eq:SSP-RK}
\begin{split}
\mathbf{c}^{(1)}&=\mathbf{c}^{m}+\Delta t L(\mathbf{c}^{m}),\\
\mathbf{c}^{(2)}&=\frac34\mathbf{c}^{m}+\frac14\big(\mathbf{c}^{(1)}+\Delta t L(\mathbf{c}^{(1)})\big),\\
\mathbf{c}^{m+1}&=\frac13\mathbf{c}^{m}+\frac23\big(\mathbf{c}^{(2)}+\Delta t L(\mathbf{c}^{(2)})\big),
\end{split}
\end{equation}
where $\Delta t$ denotes the time step size, and $\mathbf{c}^{m}$ approximates $\mathbf{c}(t^{m})$, with $t^{m} = m\Delta t$ for $m \geq 1$.
If the PBE is stiff due to multi-scale dynamics among different processes, one may also consider using implicit \cite{ketcheson2009optimal}, implicit-explicit \cite{ascher1997implicit}, or exponential time-differencing \cite{xu2025stability} Runge-Kutta methods.

\section{Positivity-Preservation}\label{sec:PPDG}
In the previous section, we have established a DG scheme for the population balance equation with the conservation property, a traditional advantage of DG-type methods that is important for ensuring the convergence to the physical solution.
However, conservation is not the only concern for numerical schemes. It is also important to preserve the positivity of the solution, as a negative number density is not only physically unacceptable but may also lead to numerical instability.

Unlike the work in \cite{liu2019high}, which develops a positivity-preserving DG method for the aggregation-breakage PBE based on an equivalent conservation law formulation for the mass density $m(v,t):=vn(v,t)$, here we establish our positivity-preserving method directly from the standard formulation \eqref{eq:PBE} for the number density $n(v,t)$.
This brings an extra difficulty. The mass (i.e., the first moment of the numerical solution $n_h$) is the physical quantity we aim to preserve during the limiting procedure, so the standard Zhang–Shu limiter for cell averages (zeroth moment) cannot be directly applied. Therefore, we propose a positivity-preserving limiter that conserves an arbitrary moment without destroying accuracy. 

Before we begin, we note that all discussions in this section are based on the forward Euler discretization, which is sufficient since the SSP-RK method \eqref{eq:SSP-RK} is a convex combination of forward Euler stages.

\subsection{Positivity of moments}

We present the following positivity results for moments of numerical solutions of the DG schemes: \eqref{eq:DG_aggregation} for the aggregation PBE, \eqref{eq:DG_breakage} for the breakage PBE, and \eqref{eq:DG_growth_nucleation} for the growth and nucleation PBE.

\begin{thm}\label{thm:pos_moments}
Suppose the numerical solution at time level $t^m$ is nonnegative, i.e.,  $n_{h}^{m}(v) \geq 0$. Then, after one forward Euler step:

\begin{itemize}
    \item The solutions of the aggregation scheme \eqref{eq:DG_aggregation} and the breakage scheme \eqref{eq:DG_breakage} have a nonnegative first moment on each cell $I_i$; that is,
    \[
    \int_{I_i} v n_h^{m+1}(v)\,dv \geq 0,
    \]
    provided that the time-step satisfies
    \[
    \Delta t \leq \frac{1}{v_{\max} \|\beta(u,w)n_{h}(w)\|_{L^{\infty}(\Omega^a)}}
    \quad \text{and} \quad
    \Delta t \leq \frac{1}{\|up(u,w)\gamma(w)\|_{L^\infty(\Omega^b)}},
    \]
    respectively.

    \item The solution of the growth and nucleation scheme \eqref{eq:DG_growth_nucleation} has a nonnegative $s$-th moment on each cell $I_i$ for all $s = 0, 1, \ldots, k$; that is,
    \[
    \int_{I_i} v^{s} n_h^{m+1}(v)\,dv \geq 0,
    \]
    under the time-step condition
    \[
    \Delta t \leq \frac{\varpi_{N_G}^{G}}{\|G\|_{L^{\infty}(\mathcal{D})}} \Delta v.
    \]
\end{itemize}
\end{thm}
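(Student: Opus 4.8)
The plan is to test each semi-discrete scheme against the appropriate monomial, advance by one forward Euler step, and write the updated cell moment as the old moment plus $\Delta t$ times the quadrature-evaluated right-hand side. In every case the right-hand side separates into a \emph{birth} part and a \emph{death} part. Since the hypothesis $n_h^m\geq 0$ makes every nodal value nonnegative and all quadrature weights are strictly positive, each birth contribution is a sum of nonnegative terms and may be dropped when bounding the update from below; the entire difficulty is to show that the old moment controls $\Delta t$ times the death part, which is exactly where the stated time-step restrictions appear.

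For aggregation I would set $\varphi(v)=v$ in \eqref{eq:DG_aggregation}, giving
\[
\int_{I_i} v\,n_h^{m+1}\,dv = J_i + \Delta t(\text{birth}-\text{death}),\qquad J_i:=\int_{I_i} v\,n_h^m\,dv\geq 0 .
\]
Bounding $\beta(u,w)n_h^m(w)\leq\|\beta(u,w)n_h(w)\|_{L^\infty(\Omega^a)}$ pointwise at the quadrature nodes leaves the factor $\sum_{T\subseteq B_i}\mathcal{Q}_T[u\,n_h^m(u)]$; this integrand is a degree-$(k+1)$ polynomial in $u$ (independent of $w$), so if the triangular rule is exact to that degree the sum equals $\int_{I_i} u\,n_h^m(u)(v_{\max}-u)\,du\leq v_{\max}J_i$, using $0\leq v_{\max}-u\leq v_{\max}$ and $u\,n_h^m(u)\geq 0$. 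Thus the death part is at most $v_{\max}\|\beta(u,w)n_h(w)\|_{L^\infty(\Omega^a)}J_i$ and the update is at least $J_i\big(1-\Delta t\,v_{\max}\|\beta(u,w)n_h(w)\|_{L^\infty(\Omega^a)}\big)\geq 0$ under the first CFL bound. The breakage case runs in parallel with $\varphi(v)=v$ in \eqref{eq:DG_breakage}: the term $-\int_{I_i}\big(\varphi(v)-v\big)\gamma(v)n_h(v)\,dv$ vanishes, the birth term over $C_i$ is nonnegative, and factoring $up(u,w)\gamma(w)\leq\|up(u,w)\gamma(w)\|_{L^\infty(\Omega^b)}$ out of the death term leaves $\sum_{E\subseteq D_i}\mathcal{Q}_E[n_h^m(w)]$, which by exactness equals $\int_{I_i} w\,n_h^m(w)\,dw=J_i$ because the $u$-extent of $D_i$ at fixed $w$ is exactly $w$; this yields the second CFL bound directly.

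For growth and nucleation I would take $\varphi(v)=v^s$ for each $s=0,1,\dots,k$ in \eqref{eq:DG_growth_nucleation}. Using that the Gauss--Lobatto rule is exact for degree $2k$ when $N_G\geq k+2$, the moment $\int_{I_i} v^s n_h^m\,dv$ equals $\Delta v_i\sum_\alpha\varpi_\alpha^G(u_\alpha^G)^s n_h^m(u_\alpha^G)$, while the volume term is already the quadrature $\mathcal{Q}_{I_i}[G\,n_h^m\,s v^{s-1}]$. The update is then a weighted combination of the nodal values $n_h^m(u_\alpha^G)\geq 0$, with interior coefficients $\Delta v_i\varpi_\alpha^G\big[(u_\alpha^G)^s+\Delta t\,G(u_\alpha^G)s(u_\alpha^G)^{s-1}\big]$, plus the two interface flux terms and the nonnegative nucleation term $\Delta t\,\mathcal{Q}_{I_i}[S v^s]$. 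The key point is that, because $G\geq 0$ and $v^{s-1}\geq 0$ on $\mathcal{D}$, the extra volume contribution is nonnegative, so all interior coefficients and the upwind inflow term $\Delta t\,G_{i-\frac12}^-(n_h^m)_{i-\frac12}^- v_{i-\frac12}^s$ are automatically nonnegative. Only the coefficient of the outflow value $n_h^m(v_{i+\frac12}^-)$ can be negative; dropping its own nonnegative volume piece, it is at least $\Delta v_i\varpi_{N_G}^G(v_{i+\frac12})^s-\Delta t\,G(v_{i+\frac12})(v_{i+\frac12})^s$, and forcing this to be nonnegative, together with $\Delta v_i\geq\Delta v$ and $G(v_{i+\frac12})\leq\|G\|_{L^\infty(\mathcal{D})}$, reproduces the stated time-step condition.

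I expect the growth case to be the real obstacle, as it is the genuine extension of the Zhang--Shu cell-average estimate ($s=0$) to every moment $s\leq k$: for $s\geq 1$ the test function no longer annihilates the volume integral $\int_{I_i}G\,n_h\,\varphi'\,dv$, and the argument closes only because monotone growth ($G\geq 0$) and the nonnegativity of the internal coordinate ($v\geq 0$) give that leftover term the correct sign. A secondary point common to the aggregation and breakage cases is the reliance on exactness of the triangular (and rectangular) quadratures for the relevant low-degree integrands; I would make the required degrees explicit --- $k+1$ for the aggregation death term and $k$ for the breakage death term --- since they are what convert the quadratured death contribution back into an exact cell moment.
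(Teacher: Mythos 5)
Your proof is correct, and its three parts split naturally against the paper's own proof. For growth--nucleation your argument is essentially identical to the paper's: Gauss--Lobatto exactness turns the old moment $\int_{I_i}v^s n_h^m\,dv$ into nonnegative nodal contributions, the volume and inflow terms have the right sign because $G\geq 0$ and $v\geq 0$, and only the right-endpoint node, which absorbs the outflow flux $-\Delta t\,G^{-}_{i+\frac12}(n_h)^{-}_{i+\frac12}v^s_{i+\frac12}$, produces the constraint $\Delta t\leq \varpi^{G}_{N_G}\Delta v/\|G\|_{L^{\infty}(\mathcal{D})}$. For aggregation and breakage, however, you take a genuinely different route for the key estimate. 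The paper does \emph{not} discard the birth term: it cancels birth against death exactly on the overlap regions $A_i\cap B_i$ and $C_i\cap D_i$ (using $\beta(w,u)=\beta(u,w)$ and the fact that the same quadrature is used there), and then redistributes the old moment $\mathcal{Q}_{I_i}[v\,n_h]$ over the residual regions, writing it as $\sum_{T\subseteq B_i\setminus A_i}\mathcal{Q}_T\big[u\,n_h(u)/(v_{\max}-v_{i+\frac12})\big]$, respectively $\sum_{E\subseteq D_i\setminus C_i}\mathcal{Q}_E\big[w\,n_h(w)/v_{i-\frac12}\big]$, so that nonnegativity is verified pointwise inside each quadrature term. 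You instead drop the (nonnegative) birth entirely and bound the \emph{full} death term via the sup-norm factorization combined with the exact geometric identities $\iint_{B_i}u\,n_h(u)\,dw\,du=\int_{I_i}u\,n_h(u)(v_{\max}-u)\,du\leq v_{\max}J_i$ and $\iint_{D_i}n_h(w)\,du\,dw=\int_{I_i}w\,n_h(w)\,dw=J_i$. Both routes rest on the same two ingredients, positivity of quadrature weights and nodes and exactness of the rules for the degree-$(k{+}1)$ integrands, and they produce identical CFL constants, so your version is a legitimate and arguably more streamlined proof of the stated theorem. What the paper's overlap cancellation buys is a sharper intermediate statement: the death contribution needs to be controlled only on $B_i\setminus A_i$ (resp.\ $D_i\setminus C_i$), where the natural scales $v_{\max}-v_{i+\frac12}$ and $w/v_{i-\frac12}\geq 1$ enter, giving per-cell slack that your global bound forgoes; also, for the last cell ($B_L\setminus A_L=\emptyset$) and first cell ($D_1\setminus C_1=\emptyset$) positivity is immediate in the paper's formulation. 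Conversely, one point you handle more explicitly than the paper is the precise quadrature exactness degrees ($k{+}1$ on triangles for $u\,n_h(u)$, $k$ on the breakage elements) that both proofs require.
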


\begin{proof}
We drop the superscript $m$ for brevity in the derivations below.

First, we take the test function $\varphi(v) = v$ in the aggregation scheme \eqref{eq:DG_aggregation} with forward Euler discretization to obtain
\begin{equation*}
\begin{split}
\int_{I_i} v n_h^{m+1}(v)\,dv 
=&\; \mathcal{Q}_{I_i}\big[v n_h(v)\big] 
+ \Delta t \sum_{\substack{T\in\mathcal{T}\\ T \subseteq A_i}} \mathcal{Q}_{T}\big[u \beta(w,u) n_h(w) n_h(u)\big] \\
&\; - \Delta t \sum_{\substack{T\in\mathcal{T}\\ T \subseteq B_i}} \mathcal{Q}_{T}\big[u \beta(u,w) n_h(u) n_h(w)\big] \\
\geq&\; \mathcal{Q}_{I_i}\big[v n_h(v)\big] 
- \Delta t \sum_{\substack{T\in\mathcal{T}\\ T \subseteq B_i \setminus A_i}} \mathcal{Q}_{T}\big[u \beta(u,w) n_h(u) n_h(w)\big] \\
=&\; \sum_{\substack{T\in\mathcal{T}\\ T \subseteq B_i \setminus A_i}} \mathcal{Q}_{T}\left[ u n_h(u) \left( \frac{1}{v_{\max} - v_{i+\frac{1}{2}}} - \Delta t \beta(u,w) n_h(w) \right) \right] \\
\geq&\; 0,
\end{split}
\end{equation*}
under the time-step condition $\Delta t\leq \frac{1}{v_{\max}||\beta(u,w)n_{h}(w)||_{L^{\infty}(\Omega^a)}}$.
Here, we have used the fact that the quadrature rules $\mathcal{Q}_{I_i}$ and $\mathcal{Q}_T$ are exact for the polynomial $v n_h(v)$.

Next, we take $\varphi(v) = v$ in the breakage scheme \eqref{eq:DG_breakage} with forward Euler discretization to obtain
\begin{equation*}
\begin{split}
\int_{I_i} v n_h^{m+1}(v)\,dv 
=&\; \mathcal{Q}_{I_i}\big[v n_h(v)\big] 
+ \Delta t \sum_{\substack{E\in\mathcal{E}^{0}\\ E \subseteq C_i}} \mathcal{Q}_{E}\big[u p(u,w) \gamma(w) n_h(w)\big] \\
&\; - \Delta t \sum_{\substack{E\in\mathcal{E}^{0}\\ E \subseteq D_i}} \mathcal{Q}_{E}\big[u p(u,w) \gamma(w) n_h(w)\big] \\
\geq&\; \mathcal{Q}_{I_i}\big[v n_h(v)\big] 
- \Delta t \sum_{\substack{E\in\mathcal{E}^{0}\\ E \subseteq D_i \setminus C_i}} \mathcal{Q}_{E}\big[u p(u,w) \gamma(w) n_h(w)\big] \\
=&\; \sum_{\substack{E\in\mathcal{E}^{0}\\ E \subseteq D_i \setminus C_i}} \mathcal{Q}_{E}\left[ \left( \frac{w}{v_{i-\frac{1}{2}}} - \Delta t u p(u,w) \gamma(w) \right) n_h(w) \right] \\
\geq&\; \sum_{\substack{E\in\mathcal{E}^{0}\\ E \subseteq D_i \setminus C_i}} \mathcal{Q}_{E}\left[ \left( 1 - \Delta t u p(u,w) \gamma(w) \right) n_h(w) \right] \\
\geq&\; 0,
\end{split}
\end{equation*}
under the time-step condition $\Delta t\leq \frac{1}{||up(u,w)\gamma(w)||_{L^\infty(\Omega^b)}}$.
Here again, the quadrature rules  $\mathcal{Q}_{I_i}$ and $\mathcal{Q}_E$ are exact for the polynomial $v n_h(v)$.

Finally, we take the test function $\varphi(v) = v^s$ in the growth and nucleation scheme \eqref{eq:DG_growth_nucleation}, yielding
\begin{equation*}
\begin{split}
\int_{I_i} v^s n_h^{m+1}(v)\,dv 
\geq&\; \mathcal{Q}_{I_i}\big[v^s n_h(v)\big] 
+ \Delta t \left( \mathcal{Q}_{I_i}\big[ s v^{s-1} G(v) n_h(v) \big] 
+ G_{i-\frac{1}{2}}^{-} (n_h)_{i-\frac{1}{2}}^{-} v_{i-\frac{1}{2}}^s 
- G_{i+\frac{1}{2}}^{-} (n_h)_{i+\frac{1}{2}}^{-} v_{i+\frac{1}{2}}^s \right) \\
=&\; \Delta v_i \sum_{\alpha=1}^{N_G - 1} \varpi_{\alpha}^{G} (v_{\alpha}^{G})^s n_h(v_{\alpha}^{G}) 
+ \Delta t \mathcal{Q}_{I_i}\big[ s v^{s-1} G(v) n_h(v) \big] \\
&\; + \Delta t G_{i-\frac{1}{2}}^{-} (n_h)_{i-\frac{1}{2}}^{-} v_{i-\frac{1}{2}}^s 
+ \left( \Delta v_i \varpi_{N_G}^{G} - \Delta t G_{i+\frac{1}{2}}^{-} \right) (n_h)_{i+\frac{1}{2}}^{-} v_{i+\frac{1}{2}}^s \\
\geq&\; 0,
\end{split}
\end{equation*}
under the time-step condition $\Delta t\leq \frac{\varpi_{N_G}^{G}}{||G||_{L^{\infty}(\mathcal{D})}}\Delta v$.
Here, we have used the exactness of the quadrature rules $\mathcal{Q}_{I_i}$ and $\mathcal{Q}_{E}$ for the polynomial $v^s n_h(v)$, and the fact that 
$v^{s} S(v) \geq 0$.
\end{proof}

Theorem \ref{thm:pos_moments} establishes the positivity of the first moment (local particle mass) in the aggregation and breakage processes, as well as the positivity of arbitrary moments (e.g., local particle number, local particle mass) in the growth and nucleation processes, under certain CFL conditions, provided that the numerical solution for the number density remains nonnegative over the entire domain at the current time stage.
If the PBE involves multiple processes, such as the aggregation-breakage PBE studied in \cite{liu2019high}, the positivity of the first moment on each cell $I_i$ still holds, possibly under more stringent CFL conditions. This can be proved by decomposing the moment.

\begin{rem}
We note that the CFL conditions established here serve only as a theoretical guarantee.
In practice, we use standard CFL conditions and apply a fallback strategy: whenever negative moments are detected at a time step, the computation is rewound and retried using a halved time step size.
The theorem established here ensures that the number of halvings is finite.
\end{rem}

\subsection{Scaling limiter}
To close the cycle of positivity-preserving algorithm, we need to apply appropriate scaling limiters to enforce the nonnegativity of the solution across the entire domain at the next time level.

For PBEs involving only growth and nucleation, we adopt the classical Zhang–Shu limiter to preserve the cell averages of the solution, thereby ensuring particle number conservation. This technique, introduced in the pioneering work on positivity-preserving DG methods for hyperbolic conservation laws \cite{zhang2010maximum}, remains standard. For extensions to Lax–Wendroff time integration and implicit time marching, we refer the reader to \cite{xu2022third} and \cite{xu2023conservation}, respectively. 
Since this case does not require any special treatment beyond these methods, we omit further discussion.

In contrast to most existing works based on the Zhang–Shu framework \cite{zhang2010maximum}, our focus for the aggregation–breakage PBE is on preserving the \textit{first moment} rather than the cell average, to ensure mass conservation. 
For this reason, the standard Zhang–Shu limiter cannot be directly applied to aggregation–breakage problems. One existing strategy is to reformulate the equation in terms of the mass density  $m(v, t) = v n(v, t)$, resulting in a conservation law with a nonlocal flux. 
A positivity-preserving scheme can then be constructed for $m(v,t)$, and the standard scaling limiter applied to maintain the cell average of the local mass.
However, this formulation alters the standard hyperbolic conservation structure when a growth term is present, as the growth term no longer appears in conservative form and the conventional treatment must also be modified accordingly. 
In our approach, however, we approximate the number density $n(v, t)$ directly using the standard form of the PBE. 
Accordingly, we develop a new scaling limiter that preserves an arbitrary moment, enabling us to enforce positivity while maintaining the desired conservation property.

The situation becomes more subtle when all four processes, growth, nucleation, aggregation, and breakage, are present. 
In this case, neither the total particle number nor total mass is constant. 
Two strategies are possible. 
One is to apply a limiter that preserves local mass on each cell, based on the belief that mass conservation remains the more fundamental mechanism. 
The other is to use Strang splitting in time \cite{liu2023positivity}, solving the growth–nucleation and aggregation–breakage subproblems sequentially, with limiters designed to preserve the cell average and first moment, respectively. 
The downside of this splitting approach is that it limits the temporal accuracy to at most second order.
To stay focused on the main theme of this paper, we consider only the \textit{mass-conservative limiter} for the aggregation–breakage PBE, which is also the focus of \cite{liu2019high}.
The moment-conservative limiter is defined as follows.

Let $n_{h}\in\mathcal{P}^{k}(I_i)$ be such that its $s$-th moment is nonnegative:
\begin{equation}
\int_{I_i}v^{s}n_{h}(v)dv\geq0,
\end{equation}
where $s$ is an integer between $0$ and $k$.
If $n_h<0$ at any point in the cell $I_i$, the modified solution $\widetilde{n}_h$ is defined by
\begin{equation}\label{eq:limiter_mass}
\widetilde{n}_h=\theta(n_{h}+m),
\end{equation}
where 
\begin{equation}
m:=-\min_{v\in I_i}n_{h}(v)>0,\quad \theta:=\frac{\int_{I_i}v^{s}n_{h}(v)\,dv}{\int_{I_i}v^{s}n_{h}(v)\,dv+m\int_{I_i}v^{s}\,dv}\in [0,1].
\end{equation}
Otherwise, the solution remains unchanged

When $s = 0$, the limiter \eqref{eq:limiter_mass} reduces to the standard positivity-preserving limiter from \cite{liu1996nonoscillatory, zhang2010maximum}, which preserves the cell averages of the numerical solution for the growth–nucleation PBE.  
Here, we adopt the limiter with $s = 1$ to conserve mass in the aggregation–breakage PBE.
If the internal coordinate of the number density is the particle length $l$ rather than the volume $v$ in certain forms of the PBE, then the moments  $\int_{0}^{\infty} l^s n(l,t)\,dl$ represent the total particle number, total particle area, and total particle mass for $s = 0$, $2$, and $3$, respectively~\cite{marchisio2003quadrature}.  
In this case, the limiter with $s = 3$ should be applied to ensure mass conservation.

\begin{thm}\label{thm:limiter_mass}
The modified polynomial $\widetilde{n}_h$ in \eqref{eq:limiter_mass} satisfies the following properties:
\begin{enumerate}
    \item \emph{Positivity:} $\widetilde{n}_{h}(v)\geq0$ for all $v\in I_i$.
    \item \emph{Conservation of the $s$-th moment:}  $\int_{I_i}v^{s}\widetilde{n}_h(v)\,dv=\int_{I_i}v^{s}n_{h}(v)\,dv$.
    \item \emph{Accuracy:} $||n_{h}-\widetilde{n}_{h}||_{L^{\infty}(I_i)}\leq C(k,s)m$, where the constant $C(k,s)$ depends only on the polynomial degree $k$ and moment $s$. 
    \item \emph{No overshoot:} $\max_{v\in I_i}\widetilde{n}_{h}(v)\leq \max_{v\in I_i}n_{h}(v)$.
\end{enumerate}    
\end{thm}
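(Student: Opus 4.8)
The plan is to verify the four properties in increasing order of difficulty, treating positivity and moment conservation as immediate consequences of the definitions, the no-overshoot property as a short monotonicity estimate, and reserving the bulk of the effort for the accuracy bound. The organizing device I would introduce at the outset is the shifted polynomial $P := n_h + m \in \mathcal{P}^k(I_i)$. Since $m = -\min_{I_i} n_h$, we have $P(v) = n_h(v) - \min_{I_i} n_h \ge 0$ pointwise, so $P$ is a nonnegative polynomial, and $\widetilde n_h = \theta P$.

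For Property 1, I would note that $v \ge 0$ on $\mathcal{D}$ makes the weight $v^s$ nonnegative, so the denominator of $\theta$ equals $\int_{I_i} v^s P\,dv \ge \int_{I_i} v^s n_h\,dv \ge 0$; this simultaneously confirms $\theta \in [0,1]$ and gives $\widetilde n_h = \theta P \ge 0$. For Property 2 I would simply compute $\int_{I_i} v^s \widetilde n_h\,dv = \theta \int_{I_i} v^s P\,dv$ and observe that the denominator of $\theta$ is exactly $\int_{I_i} v^s P\,dv$, so the factor cancels and the $s$-th moment is reproduced. For Property 4, I would write $\max_{I_i}\widetilde n_h = \theta(\max_{I_i} n_h + m)$ using $\theta \ge 0$, set $M^+ := \max_{I_i} n_h$ and $B := m\int_{I_i} v^s\,dv > 0$, and use the pointwise bound $n_h \le M^+$ to get $\int_{I_i} v^s n_h\,dv \le M^+\int_{I_i} v^s\,dv$. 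Since $x \mapsto x/(x+B)$ is nondecreasing for $x \ge 0$, this yields $\theta \le M^+/(M^+ + m)$ (one checks $M^+ > 0$ from the hypothesis $\int_{I_i} v^s n_h\,dv \ge 0$ together with $m > 0$), and hence $\theta(M^+ + m) \le M^+$, which is the claim.

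The main obstacle is Property 3. Writing $n_h - \widetilde n_h = (1-\theta)P - m$ and using $1 - \theta = (m\int_{I_i} v^s\,dv) / \int_{I_i} v^s P\,dv$, the accuracy estimate reduces to the weighted inverse inequality $\|P\|_{L^\infty(I_i)}\int_{I_i} v^s\,dv \le C(k,s)\int_{I_i} v^s P\,dv$, valid for every nonnegative $P \in \mathcal{P}^k(I_i)$ with a constant independent of the cell. I would prove this by rescaling to the reference interval via $v = v_i + \tfrac{\Delta v_i}{2}t$, which reduces the admissible weights to the one-parameter family $w_c(t) = (1+ct)^s$ with $c = \frac{\Delta v_i}{2 v_i}\in[0,1]$, the factor $v_i^s$ cancelling in the ratio and $c=1$ corresponding to the limiting cell whose left endpoint is the origin. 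The inequality then follows from compactness: on the compact set $[0,1]\times\{P\in\mathcal{P}^k([-1,1]) : P\ge 0,\ \|P\|_\infty = 1\}$ the continuous functional $(c,P)\mapsto \int_{-1}^1 w_c P\,dt / \int_{-1}^1 w_c\,dt$ is strictly positive, since $w_c > 0$ almost everywhere and a nonnegative nonzero polynomial vanishes only on a finite set, so the functional attains a positive minimum $\delta(k,s)$ and $C(k,s)=1/\delta(k,s)$ works uniformly. Combining, $\|n_h-\widetilde n_h\|_{L^\infty(I_i)} \le (1-\theta)\|P\|_{L^\infty(I_i)} + m \le (C(k,s)+1)m$, and absorbing the $+1$ into the constant completes the estimate.

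The delicate point to get right is the uniformity of $C(k,s)$ over all cells: a naive argument would only produce a cell-dependent constant, which is precisely what the reduction to the compact family $\{w_c\}_{c\in[0,1]}$ is designed to rule out. I expect Properties 1, 2, and 4 to be essentially mechanical once $P$ is introduced, with all of the genuine work concentrated in establishing this scale-invariant inverse inequality for nonnegative polynomials.
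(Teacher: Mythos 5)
Your proof is correct, but it takes a genuinely different route from the paper's on the two nontrivial properties. For the accuracy bound (Property 3), the paper works at the $(k+1)$ Gauss--Legendre nodes $u_\alpha$ of $I_i$: writing $\widehat n_h - \widetilde n_h = (1-\theta)\widehat n_h$ (a polynomial of degree at most $k$), it represents $\int_{I_i} v^s \widehat n_h\,dv$ by the exact quadrature, drops all but the $\alpha$-th nonnegative term in the denominator, and removes the cell dependence through the monotone bound $\frac{v_{i-\frac12}+\Delta v_i}{v_{i-\frac12}+\frac12(1+r_\alpha)\Delta v_i}\le \frac{2}{1+r_\alpha}$; Lagrange interpolation at the same nodes then propagates the nodal bound to all of $I_i$, yielding the explicit constants $C_1(k,s)=\max_\alpha \varpi_\alpha^{-1}\bigl(2/(1+r_\alpha)\bigr)^{s}$ and $C_2(k,s)=C_1(k,s)\sum_\alpha\|\ell_\alpha\|_{L^\infty}$. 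You instead isolate the scale-invariant inverse inequality $\|P\|_{L^\infty(I_i)}\int_{I_i}v^s\,dv\le C(k,s)\int_{I_i}v^sP\,dv$ for nonnegative $P\in\mathcal{P}^k(I_i)$, map to the reference cell so the weights form the one-parameter family $(1+ct)^s$ with $c=\Delta v_i/(2v_i)\in[0,1]$ (with $c=1$ exactly for the cell touching the origin), and conclude by compactness of $[0,1]\times\{P\ge 0,\ \|P\|_\infty=1\}$ together with a.e.\ positivity of $w_cP$. Both arguments resolve the same crux --- uniformity of $C(k,s)$ for cells arbitrarily close to $v=0$ --- but the paper's quadrature route produces a computable constant tied to nodes already used in the implementation, whereas yours is shorter and more flexible (it would survive more general weight families) at the price of a non-constructive constant. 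For the no-overshoot property (Property 4), the paper deduces it from the already-established moment conservation: $\int_{I_i}v^s(n_h-\widetilde n_h)\,dv=0$ forces $\max_{I_i}(n_h-\widetilde n_h)\ge 0$, which rearranges, via $\max(\theta\widehat n_h)=\theta\max\widehat n_h$, into the claim; your direct bound $\theta\le M^+/(M^++m)$ via monotonicity of $x\mapsto x/(x+B)$ is equally valid, does not invoke Property 2, and your side remark that $M^+>0$ (else $n_h\equiv 0$, contradicting $m>0$) is the right justification.
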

\begin{proof}
Property ($1$) follows directly from the definition, and Property ($2$) can be verified by a straightforward calculation.

To prove Property ($3$), we define the intermediate function $\widehat{n}_{h}:=n_{h}+m$, so that $\widetilde{n}_{h}=\theta\widehat{n}_h$.
Then,
\begin{equation}\label{eq:err_total}
||n_{h}-\widetilde{n}_{h}||_{L^\infty(I_i)}\leq ||n_{h}-\widehat{n}_{h}||_{L^\infty(I_i)}+||\widehat{n}_{h}-\widetilde{n}_{h}||_{L^\infty(I_i)}=:e_{1}+e_{2}.
\end{equation}
Clearly, $e_1=m$. 

To estimate $e_2$, we use the $(k+1)$-point Gauss-Legendre quadrature rule on the reference interval $(-1,1)$ with nodes $r_{\alpha}$ and weights $\varpi_{\alpha}$, for $\alpha=0,1,\ldots, k$. 
We map the reference nodes to the physical cell $I_i$ by
\begin{equation*}
u_{\alpha}:=v_{i}+\frac12 r_{\alpha}\Delta v_{i},\quad \alpha=0,1,\ldots, k.
\end{equation*}
Recall that this quadrature rule is exact for polynomials in $\mathcal{P}^{2k+1}$.
Moreover, the nodes lie strictly within the open interval, and all weights are strictly positive.

At each Gauss-Legendre node $u_{\alpha}$, we estimate:
\begin{equation*}
\begin{split}
\big|\widehat{n}_h(u_{\alpha})-\widetilde{n}_h(u_{\alpha})\big|=&(1-\theta)\widehat{n}_{h}(u_{\alpha})\\
=&\frac{m\int_{I_i}v^{s}\,dv}{\int_{I_i}v^{s}n_{h}(v)\,dv+m\int_{I_i}v^{s}\,dv}\cdot\widehat{n}_{h}(u_\alpha)\\
=&\frac{\widehat{n}_{h}(u_\alpha)}{\int_{I_i}v^{s}\widehat{n}_{h}(v)\,dv}\cdot\frac{m}{s+1}\cdot(v_{i+\frac12}^{s+1}-v_{i-\frac12}^{s+1})\\
=&\frac{\widehat{n}_{h}(u_{\alpha})}{\sum_{\beta=0}^{k}\varpi_{\beta}u_{\beta}^{s}\widehat{n}_{h}(u_{\beta})}\cdot\frac{m}{s+1}\cdot\frac{v_{i+\frac12}^{s+1}-v_{i-\frac12}^{s+1}}{v_{i+\frac12}-v_{i-\frac12}}\\
\leq& \frac{\widehat{n}_{h}(u_\alpha)}{\varpi_{\alpha}u_{\alpha}^{s}\widehat{n}_{h}(u_{\alpha})}\cdot\frac{m}{s+1}\cdot\sum_{p=0}^{s}v_{i-\frac12}^{s-p}v_{i+\frac12}^{p} \\
\leq & \varpi_{\alpha}^{-1}\left(\frac{v_{i+\frac12}}{u_{\alpha}}\right)^{s}m\\
=&\varpi_{\alpha}^{-1}\left(\frac{v_{i-\frac12}+\Delta v_{i}}{v_{i-\frac12}+\frac12(1+r_{\alpha})\Delta v_{i}}\right)^{s}m\\
\leq& \varpi_{\alpha}^{-1}\left(\frac{1}{\frac12(1+r_{\alpha})}\right)^{s}m\\
\leq& {C}_1(k,s)m,
\end{split}
\end{equation*}
where ${C}_1(k,s)$ is a constant determined by the quadrature rule and the moment $s$.

Next, we write the difference using the Lagrange basis functions $\ell_{\alpha}(u)$:
\begin{equation*}
\widehat{n}_h(u)-\widetilde{n}_{h}(u)=\sum_{\alpha=0}^{k}\big(\widehat{n}_h(u_{\alpha})-\widetilde{n}_{h}(u_{\alpha})\big)\ell_{\alpha}(u),
\end{equation*}
and thus,
\begin{equation*}
\begin{split}
e_{2}\leq&\sum_{\alpha=0}^{k}\big|\widehat{n}_h(u_{\alpha})-\widetilde{n}_{h}(u_{\alpha})\big|\cdot||\ell_{\alpha}(u)||_{L^\infty(I_i)}\\
\leq&\sum_{\alpha=0}^{k}||\ell_{\alpha}(u)||_{L^\infty(I_i)}\cdot C_{1}(k,s) m\\
=:&C_{2}(k,s)m.
\end{split}
\end{equation*}

Combining with \eqref{eq:err_total}, we obtain:
\begin{equation*}\label{prf:e1e2}
||n_{h}-\widetilde{n}_{h}||_{L^\infty(I_i)}\leq e_1+e_2=(1+C_{2}(k,s))m=:C(k,s)m.
\end{equation*}

Finally, for Property (4), using the conservation of the $s$-th moment:
\begin{equation*}
\int_{I_i}v^{s}\left(n_{h}(v)-\widetilde{n}_{h}(v)\right)\,dv=0,
\end{equation*}
we deduce
\begin{equation*}
\begin{split}
0\leq\max_{v\in I_i}\left( n_{h}(v)-\widetilde{n}_{h}(v) \right)=&\max_{v\in I_i}\left( \widehat{n}_{h}(v)-m-\theta\widehat{n}_{h}(v)\right)\\
=&\max_{v\in I_i} \widehat{n}_{h}(v)-m - \max_{v\in I_i} \theta\widehat{n}_{h}(v)\\
=&\max_{v\in I_i} n_h(v) - \max_{v\in I_i}\widetilde{n}_{h}(v),
\end{split}
\end{equation*}
where the second-to-last equality follows from $\widehat{n}_h \geq 0$ and $\theta \in [0,1]$.

This concludes the proof.
\end{proof}

In practice, the value of $m$ in the limiter can be computed by evaluating  $n_h$ over quadrature points.
The positivity-preserving DG scheme, together with the scaling limiter, constitutes a complete forward Euler stage, which serves as the building block for high-order SSP-RK time integration methods.
We will validate the theoretical results through numerical tests in Section~\ref{sec:tests}.

\section{Numerical Experiments}\label{sec:tests}
In this section, we present a series of numerical experiments to verify the accuracy, conservation, and positivity-preserving properties of the proposed DG method.
The section is divided into three parts.
In the first part, we test simple benchmark problems with analytical solutions to validate the scheme.
In the second part, we consider more complex and practical cases to demonstrate the scheme’s performance.
In the final part, we focus on challenging problems involving positivity-preserving issues to illustrate the enhanced stability offered by our approach.
Because the aggregation process presents the greatest numerical challenges \cite{hasseine2015adomian}, we focus on this case more extensively in our tests.
To save space, we use the $\mathcal{P}^2$-DG scheme and SSP-RK3 time integration method in most tests.
However, the DG method also performs well with other polynomial spaces and time integration methods.

\subsection{Benchmarks with analytical solutions}
\begin{exmp}\label{ex:Single_process}
\textbf{Single process}
\end{exmp}
In this example, we test our algorithm on PBEs involving single processes, including pure aggregation \eqref{eq:PBE_aggregation} with either constant or additive kernels \cite{scott1965analytic}, and pure breakage \eqref{eq:PBE_breakage} with a uniform daughter distribution and a linear breakage rate (the UL model) \cite{ziff1985kinetics}.

\begin{itemize}
    \item {Case I}: Aggregation with a constant kernel
    \begin{equation}
        \beta(u,w)=1,\quad n_1(v,t)=\frac{4}{(t+2)^2 v_0}\exp\left(-\frac{2v}{(t+2)v_0}\right).
    \end{equation}
    \item {Case II}: Aggregation with an additive kernel
    \begin{equation}
        \beta(u,w)=u+w,\quad n_2(v,t)=\frac{1-T}{v\sqrt{T}}\exp\left(-(1+T)\frac{v}{v_0}\right)I_1\left(2\sqrt{T}\frac{v}{v_0}\right),
    \end{equation}
    where $T=1-e^{-v_0t}$ and $I_1$ is the modified Bessel function of the first kind (order $1$).
    \item {Case III}: Breakage with a uniform daughter distribution and linear rate
    \begin{equation}\label{eq:breakage_setting}
        p(u,w)=\frac{2}{w},\quad \gamma(w)=w,\quad n_3(v,t)=\frac{(1+v_0 t)^2}{v_0}\exp\left(-(1+v_0 t)\frac{v}{v_0}\right).
    \end{equation}    
\end{itemize}
All three cases share the same initial condition:
\begin{equation}\label{eq:exp_init_cond}
n_i(v,0)=\frac{1}{v_0}\exp\!\left(-\frac{v}{v_0}\right), \qquad i=1,2,3.
\end{equation}
Throughout this study we set $v_0 = 0.2$, which serves as the characteristic particle volume.

The computational setup is as follows.  
We truncate the computational domain to $\mathcal{D} = [0, v_{\max}]$ with $v_{\max} = 10$ to ensure that the tails of the distribution beyond the boundary are negligible.  
Since the number densities are concentrated near the origin and decay rapidly, we employ a nonuniform mesh with $v_{i+\frac{1}{2}} = v_{\max} \left(\frac{i}{L}\right)^3, \quad i = 0, 1, \ldots, L,$ with $L = 15$ cells. 
The CFL condition is taken as $\Delta t = 10 \Delta v$.
Simulations are performed for all three cases up to final time $t = 1$.  
In Figure \ref{fig:Single_process}, we display the numerical number densities at the cell centers, along with the corresponding exact solutions for comparison.  
We also track the deviation of the total mass over time, which is plotted in the same figure.  
The numerical solutions show excellent agreement with the exact ones, and the total mass deviation remains at the round-off level, on the order of $10^{-16}$.
To assess the convergence behavior of our method, we report the numerical errors and observed orders of accuracy over a sequence of refined meshes. 
The mesh at level~0 corresponds to the one used in Figure~\ref{fig:Single_process}.
Each finer mesh at level $i$ is obtained by equally splitting each cell of the level $i-1$ mesh into two, for $i = 1, 2, 3$.
The results are presented in Table \ref{tab:Single_process}, where we clearly observe the expected third-order convergence of the scheme.

\begin{figure}[!htbp]
 \centering
 \begin{subfigure}[b]{0.32\textwidth}
  \includegraphics[width=\textwidth]{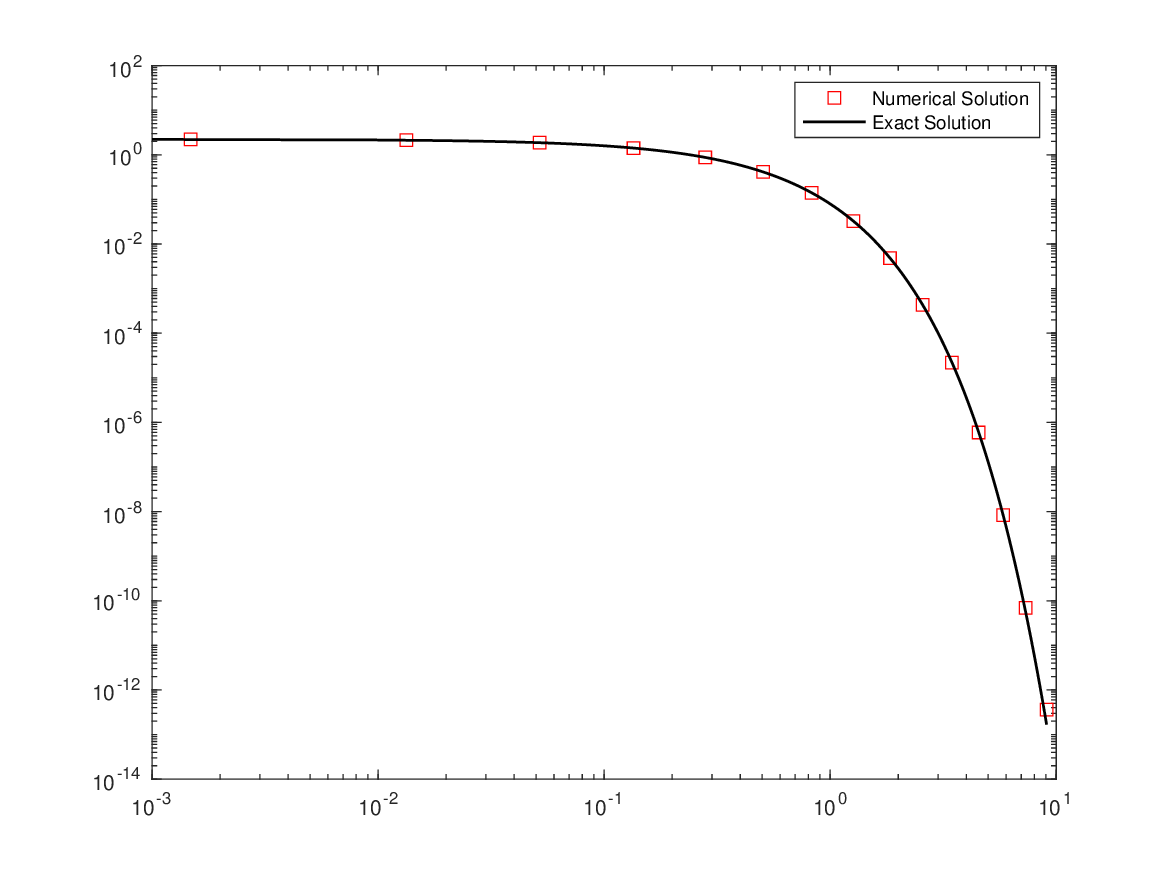}
  \caption{Case I: $n_h(v,t)$ at $t=1$}
 \end{subfigure}
 \begin{subfigure}[b]{0.32\textwidth}
  \includegraphics[width=\textwidth]{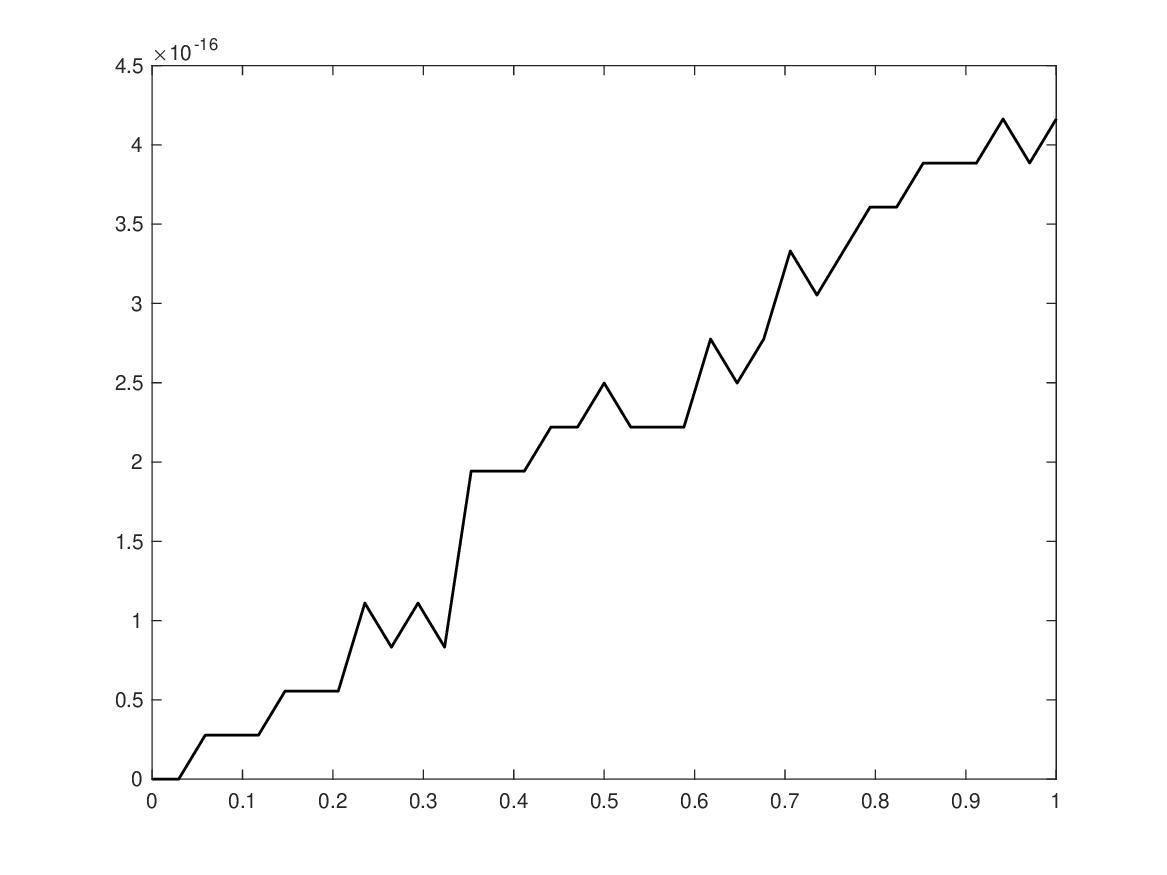}
  \caption{Case I: Total mass deviation}
 \end{subfigure}

 \begin{subfigure}[b]{0.32\textwidth}
  \includegraphics[width=\textwidth]{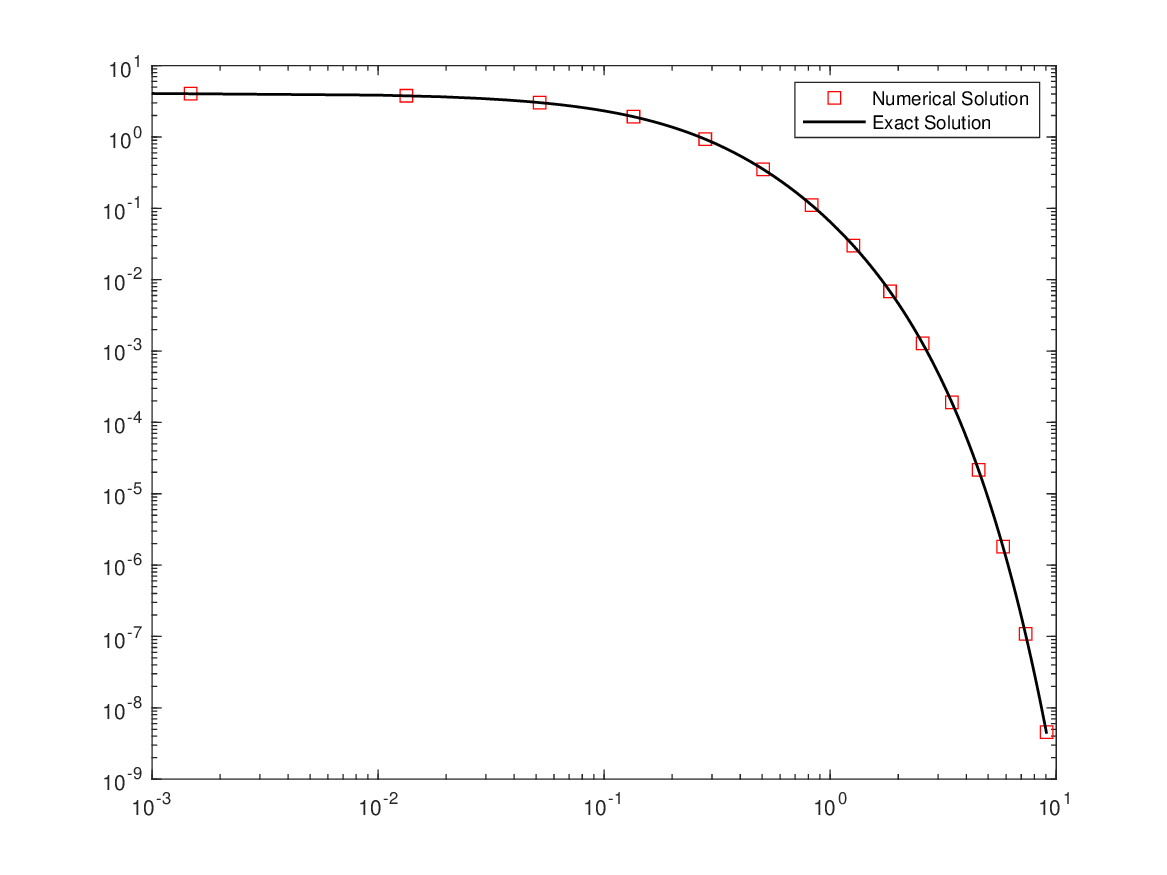}
  \caption{Case II: $n_h(v,t)$ at $t=1$}
 \end{subfigure}
 \begin{subfigure}[b]{0.32\textwidth}
  \includegraphics[width=\textwidth]{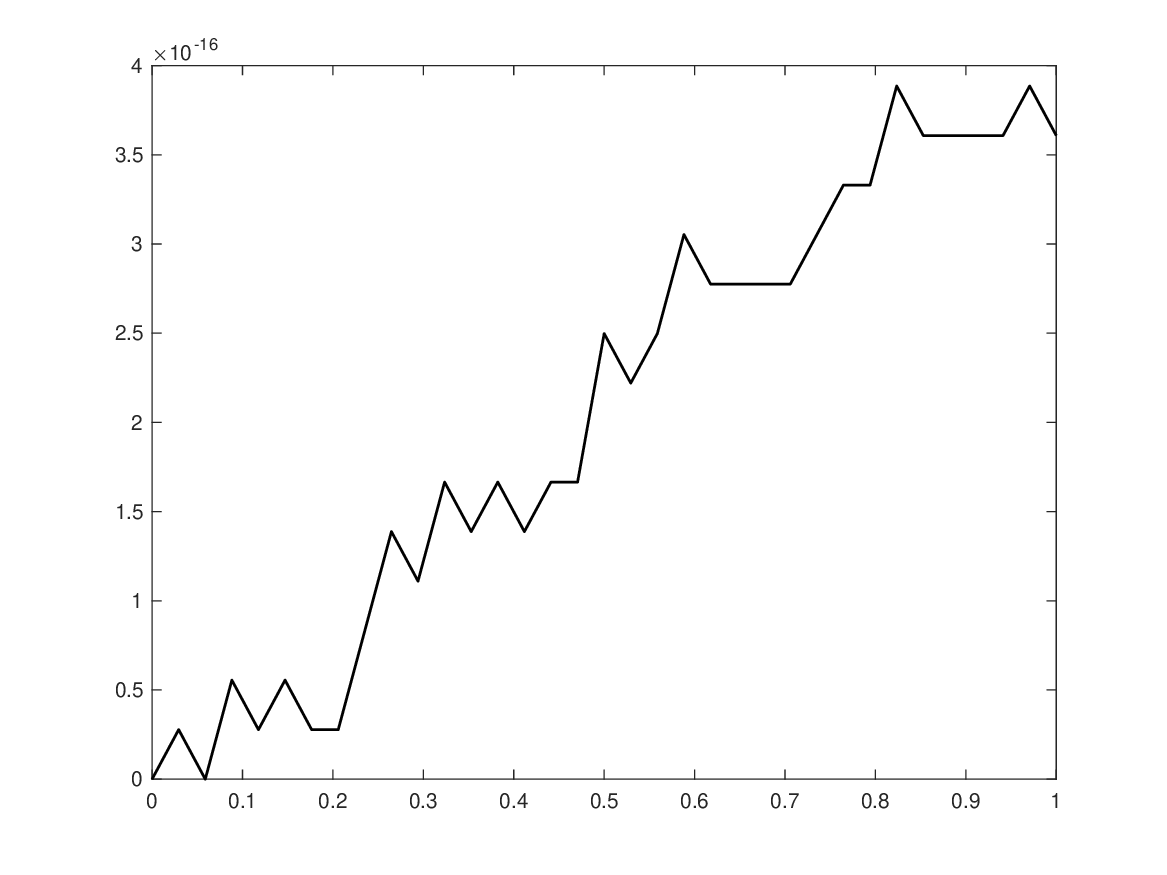}
  \caption{Case II: Total mass deviation}
 \end{subfigure}

 \begin{subfigure}[b]{0.32\textwidth}
  \includegraphics[width=\textwidth]{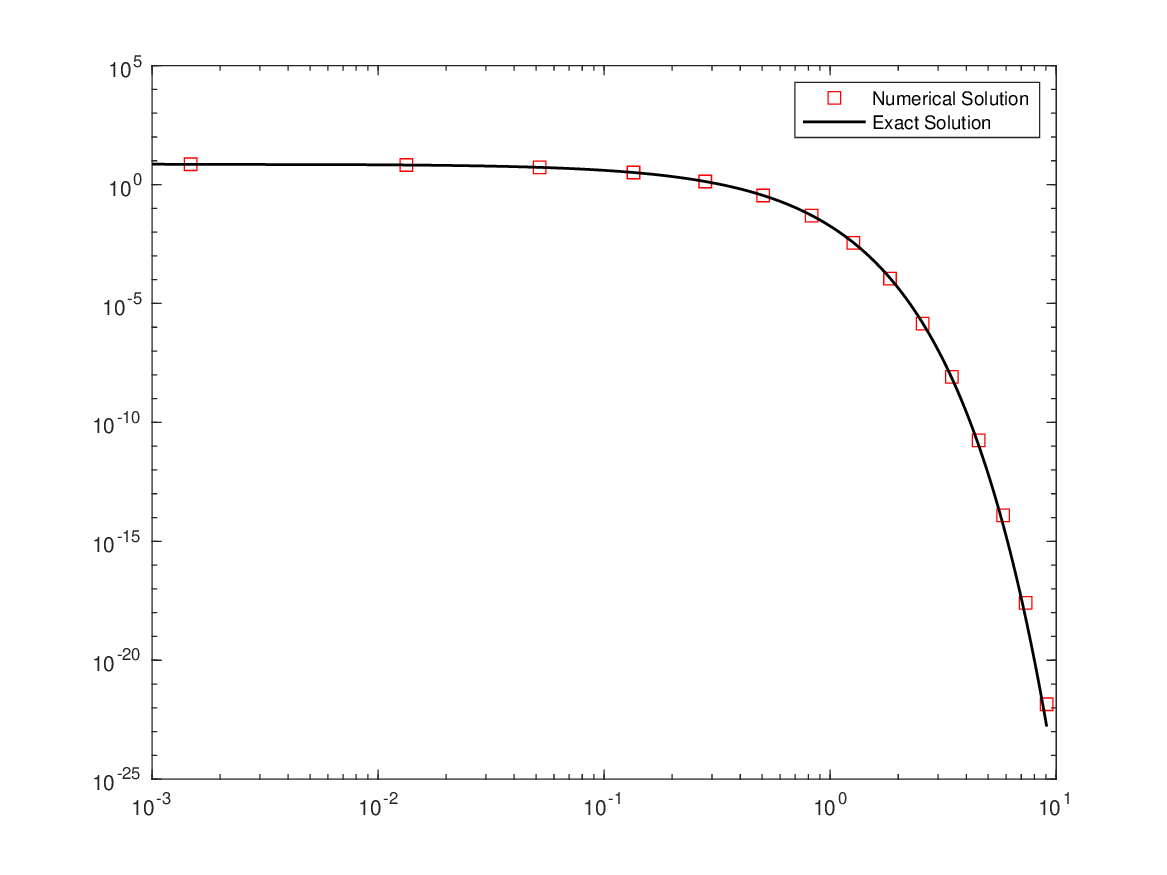}
  \caption{Case III: $n_h(v,t)$ at $t=1$}
 \end{subfigure}
 \begin{subfigure}[b]{0.32\textwidth}
  \includegraphics[width=\textwidth]{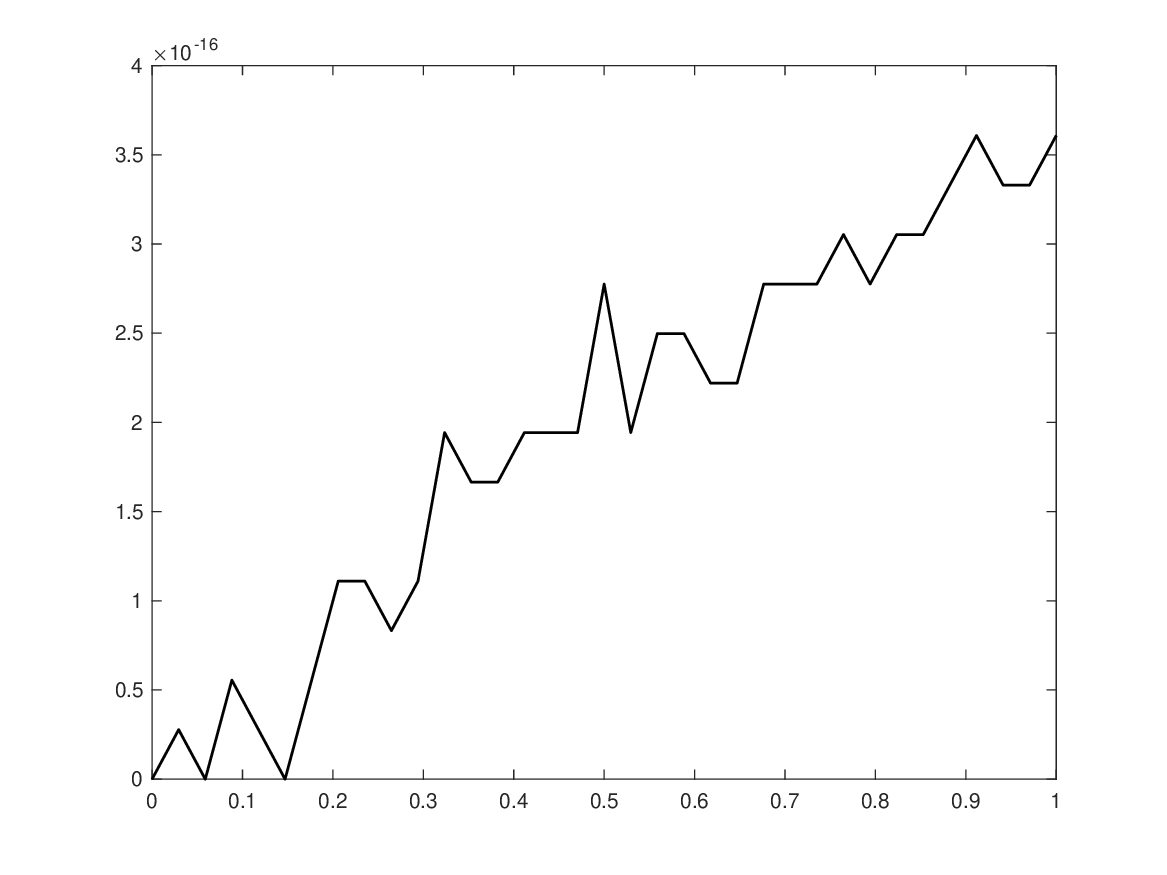}
  \caption{Case III: Total mass deviation}
 \end{subfigure} 
\caption{\textbf{Example~\ref{ex:Single_process}: Single process.} Left: numerical and exact number densities; right: deviation of the total mass over time. 
A nonuniform mesh with $L = 15$ cells is used.}
\label{fig:Single_process}
\end{figure}

\begin{table}[htbp]
  \centering
  \renewcommand{\arraystretch}{1.2}   
  \setlength{\tabcolsep}{5pt}         
  \begin{tabular}{c*{6}{c}}
    \toprule
    \multicolumn{7}{c}{{Case I}}\\
    \midrule
    Level & $||n-n_h||_{L^1}$ & Order & $||n-n_h||_{L^2}$ & Order & $||n-n_h||_{L^{\infty}}$ & Order\\
    \midrule
      0 & $9.53\times10^{-4}$ & --   & $8.10\times10^{-4}$ & --   & $1.35\times10^{-3}$ & --  \\
      1 & $1.24\times10^{-4}$ & $2.95$ & $1.08\times10^{-4}$ & $2.90$ & $2.03\times10^{-4}$ & $2.73$\\
      2 & $1.57\times10^{-5}$ & $2.98$ & $1.39\times10^{-5}$ & $2.96$ & $2.86\times10^{-5}$ & $2.83$\\
      3 & $1.96\times10^{-6}$ & $3.00$ & $1.74\times10^{-6}$ & $3.00$ & $3.77\times10^{-6}$ & $2.92$\\
    \midrule
    \multicolumn{7}{c}{{Case II}}\\
    \midrule
    Level & $||n-n_h||_{L^{1}}$ & Order & $||n-n_h||_{L^{2}}$ & Order & $||n-n_h||_{L^{\infty}}$ & Order\\
    \midrule
      0 & $1.75\times10^{-3}$ & -- & $1.80\times10^{-3}$ & -- & $3.95\times10^{-3}$ & -- \\
      1 & $2.31\times10^{-4}$ & $2.92$ & $2.50\times10^{-4}$ & $2.85$ & $6.35\times10^{-4}$ & $2.64$ \\
      2 & $2.94\times10^{-5}$ & $2.98$ & $3.22\times10^{-5}$ & $2.96$ & $9.06\times10^{-5}$ & $2.81$ \\
      3 & $3.68\times10^{-6}$ & $3.00$ & $4.06\times10^{-6}$ & $2.99$ & $1.21\times10^{-5}$ & $2.90$ \\
    \midrule
    \multicolumn{7}{c}{{Case III}}\\
    \midrule
    Level & $||n-n_h||_{L^1}$ & Order & $||n-n_h||_{L^2}$ & Order & $||n-n_h||_{L^{\infty}}$ & Order\\
    \midrule
      0 & $3.08\times10^{-3}$ & -- & $3.51\times10^{-3}$ & -- & $7.20\times10^{-3}$ & --  \\
      1 & $4.08\times10^{-4}$ & $2.92$ & $4.86\times10^{-4}$ & $2.85$ & $1.14\times10^{-3}$ & $2.66$ \\
      2 & $5.17\times10^{-5}$ & $2.98$ & $6.24\times10^{-5}$ & $2.96$ & $1.69\times10^{-4}$ & $2.75$ \\
      3 & $6.50\times10^{-6}$ & $2.99$ & $7.87\times10^{-6}$ & $2.99$ & $2.32\times10^{-5}$ & $2.87$\\      
    \bottomrule
  \end{tabular}
\caption{\textbf{Example~\ref{ex:Single_process}: Single process.} Error table for the test on PBEs involving single processes. Level~$0$ represents a baseline mesh with $L = 15$ cells, and level~$\ell$ is obtained by equally splitting the cells of mesh level~$\ell-1$, for $\ell = 1, 2, 3$.}\label{tab:Single_process}
\end{table}


\begin{exmp}\label{ex:AggBreak}
\textbf{Aggregation--breakage process}
\end{exmp}
In this example we test the scheme on the aggregation--breakage PBE \eqref{eq:PBE_aggregation_breakage} with
\begin{equation}\label{eq:aggr_break_para}
\beta(u,w)=1,\quad p(u,w)=\frac{2}{w},\quad \gamma(w)=w,
\end{equation}
whose analytical solution reads \cite{patil1998analytical, lage2002comments}
\begin{equation}\label{eq:aggr_break_sol0}
n(v,t)=2\left(\frac{\lambda(t)}{\lambda_{\infty}}\right)^2\exp\left(-\lambda(t)v\right),
\end{equation}
where
\begin{equation}\label{eq:aggr_break_sol1}
\lambda(t)=\lambda_{\infty}\frac{\lambda_0+\lambda_{\infty}\tanh(t/\lambda_{\infty })}{\lambda_{\infty}+\lambda_0\tanh(t/\lambda_{\infty})},
\end{equation}
and $\lambda_0, \lambda_{\infty}>0$ are positive constants.
The initial condition of the solution is
\begin{equation}\label{eq:aggr_break_init}
n(v,0)=2\left(\frac{\lambda_0}{\lambda_{\infty}}\right)^2\exp\left(-\lambda_0v\right),
\end{equation}
and we take $\lambda_0=3$, $\lambda_{\infty}=4$ in our test.

The computational domain, mesh generation and CFL choice are kept identical to Example \ref{ex:Single_process}.
We evolve the solution to $t=1$.
Figure \ref{fig:AggBreak} compares the numerical number density at $t=1$ with the analytical profile (left panel) and reports the deviation of the total mass over time (right panel).  
The overlap of the two curves and the machine-level mass preservation confirm both accuracy and conservation.  
Table \ref{tab:AggBreak} lists the errors on a sequence of successively refined meshes (the same meshes used in Example \ref{ex:Single_process}).  
A consistent third-order convergence is observed in all norms.

\begin{figure}[!htbp]
 \centering
 \begin{subfigure}[b]{0.32\textwidth}
  \includegraphics[width=\textwidth]{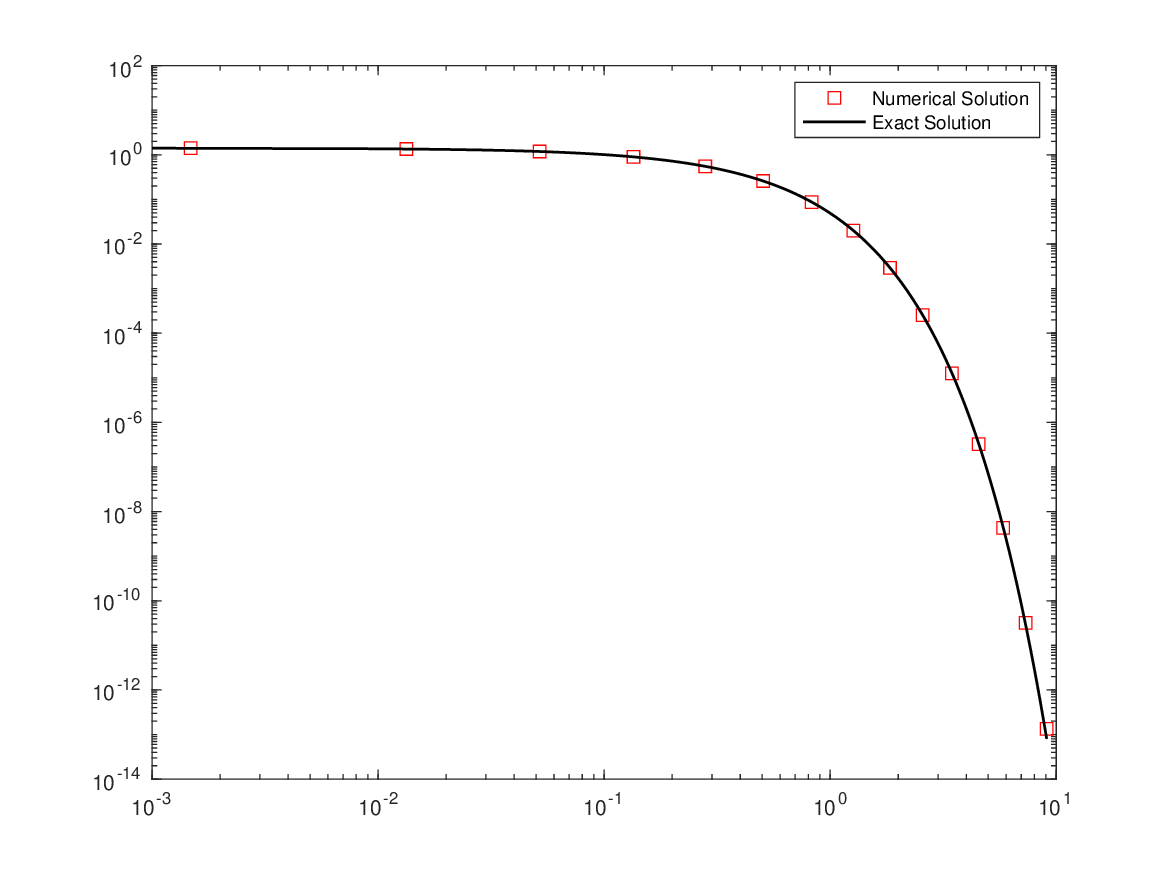}
  \caption{$n_h(v,t)$ at $t=1$}
 \end{subfigure}
 \begin{subfigure}[b]{0.32\textwidth}
  \includegraphics[width=\textwidth]{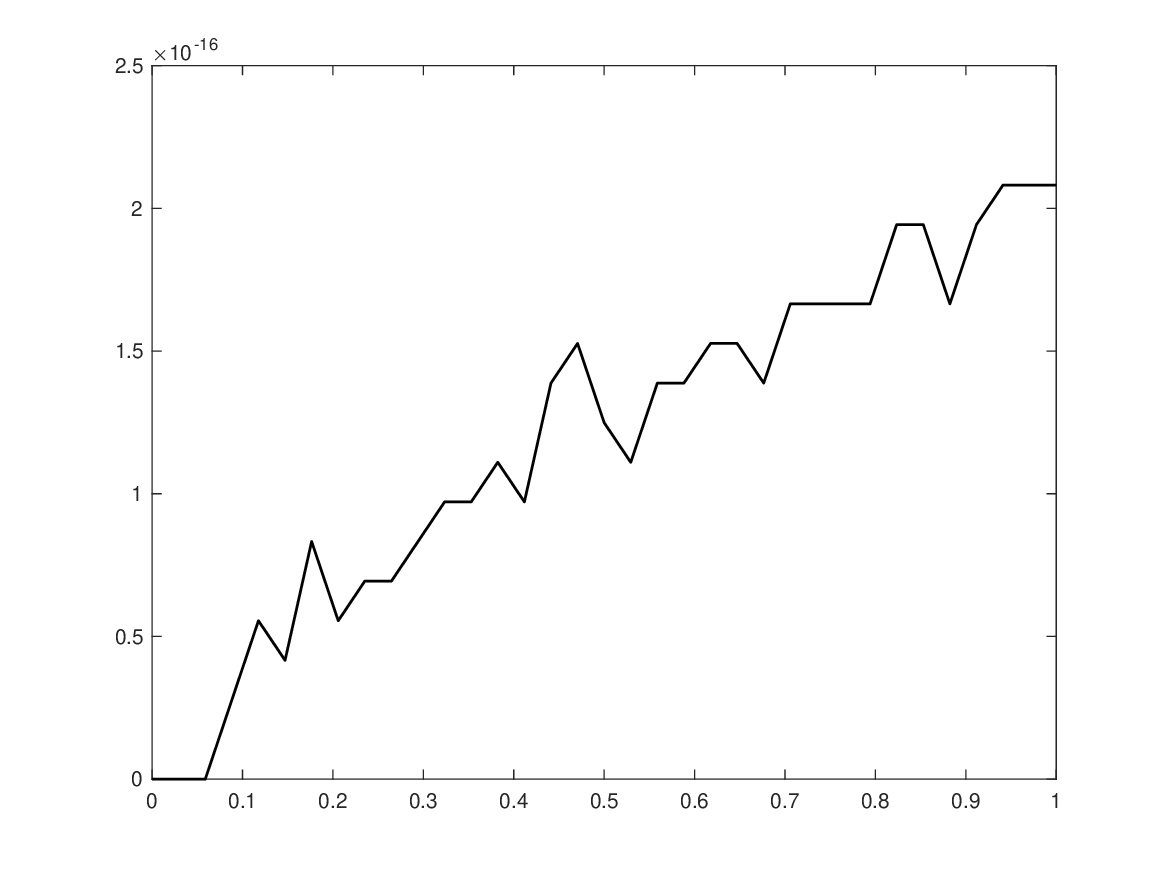}
  \caption{Total mass deviation}
 \end{subfigure}
\caption{\textbf{Example~\ref{ex:AggBreak}: Aggregation--breakage.}
Left: numerical and exact number densities; right: deviation of the total mass over time.  A nonuniform mesh with $L = 15$ cells is used.}
\label{fig:AggBreak}
\end{figure}

\begin{table}[htbp]
  \centering
  \renewcommand{\arraystretch}{1.2}   
  \setlength{\tabcolsep}{5pt}         
  \begin{tabular}{c*{6}{c}}
    \toprule
    Level & $||n-n_h||_{L^{1}}$ & Order & $||n-n_h||_{L^{2}}$ & Order & $||n-n_h||_{L^{\infty}}$ & Order\\
    \midrule
      0 & $6.08\times10^{-4}$ & -- & $5.19\times10^{-4}$ & -- & $8.18\times10^{-4}$ & -- \\
      1 & $7.91\times10^{-5}$ & $2.94$ & $6.97\times10^{-5}$ & $2.90$ & $1.26\times10^{-4}$ & $2.70$ \\
      2 & $9.98\times10^{-6}$ & $2.99$ & $8.87\times10^{-6}$ & $2.97$ & $1.79\times10^{-5}$ & $2.81$ \\
      3 & $1.25\times10^{-6}$ & $3.00$ & $1.12\times10^{-6}$ & $2.99$ & $2.39\times10^{-6}$ & $2.90$ \\
    \bottomrule
  \end{tabular}
  \caption{\textbf{Example~\ref{ex:AggBreak}: Aggregation--breakage.} Error table for the test on the PBE involving aggregation-breakage process. Level~$0$ represents a baseline mesh with $L = 15$ cells, and level~$\ell$ is obtained by equally splitting the cells of mesh level~$\ell-1$, for $\ell = 1, 2, 3$.}
  \label{tab:AggBreak}  
\end{table}


\begin{exmp}\label{ex:AggGrowth}
\textbf{Aggregation--growth process}
\end{exmp}
We next test three aggregation--growth benchmarks taken from \cite{ramabhadran1976dynamics}.  
All cases include the linear growth term $G(v)=v$ and admit closed-form solutions.  

\begin{itemize}
    \item Case I: Constant aggregation kernel $\beta(u,w)=1$
    
The analytical solution is given by
    \begin{equation}
        n(v,t)=\frac{M_0^2}{M_1}\exp\left( -\frac{M_0}{M_1}v \right),
    \end{equation}
where
\begin{equation}
    M_0=\frac{2}{2+t},\quad M_1=v_0e^{t}.
\end{equation}
The corresponding initial and boundary conditions are given by
\begin{equation}
n(v,0)=\frac{1}{v_0}e^{-v/v_0},\quad n(0,t)=\frac{4}{v_0e^{t}(2+t)^2}.
\end{equation}

\item Case II: Constant aggregation kernel $\beta(u,w)=1$

The analytical solution is given by
\begin{equation}
n(v,t)=\frac{2M_0^2}{M_1}\frac{1}{\sqrt{1-M_0}}\exp\left( -\frac{2v}{M_1} \right)\sinh\left(\sqrt{1-M_0}\frac{2v}{M_1}\right)
\end{equation}
where
\begin{equation}
    M_0=\frac{2}{2+t},\quad M_1=2v_0e^{t}.
\end{equation}
The corresponding initial and boundary conditions differ from those in Case~I:
\begin{equation}
n(v,0)=\frac{v}{v_0^2}e^{-v/v_0},\quad n(0,t)=0    
\end{equation}

\item Case III: Additive aggregation kernel $\beta(u,w)=u+w$

The analytical solution is given by
\begin{equation}
n(v,t)=\frac{\frac{M_0^2}{M_1}\exp\left(-\frac{M_0}{M_1}\left(\frac{2}{M_0}-1\right)v\right)I_1\left(2\sqrt{1-M_0}\frac{v}{M_1}\right)}{\frac{M_0v}{M_1}\sqrt{1-M_0}},
\end{equation}
where 
\begin{equation}
M_0=\exp\left(v_0(1-e^{t})\right),\quad M_1=v_0e^{t}.
\end{equation}
The corresponding initial and boundary conditions are given by
\begin{equation}
n(v,0)=\frac{1}{v_0}e^{-v/v_0},\quad n(0,t)=\frac{\exp\left(v_0(1-e^{t})\right)}{v_0e^{t}}
\end{equation}
\end{itemize}
Here, $M_0:=\int_{0}^{\infty}n(v,t)\,dv$ and $M_1:=\int_{0}^{\infty}vn(v,t)\,dv$ are the zeroth and first moments of the solution representing the total number and mass of particles over time, respectively.

We reuse the domain, mesh family and time step described in Example \ref{ex:Single_process}.  
Each simulation is run to $t=1$.
Due to growth, the total mass of particles is not conserved over time, nor is the total number.
Figure \ref{fig:AggGrowth} displays, for every case, (i) the numerical and analytical number density at the final time, (ii) the evolution of the total number $M_0(t)$ and (iii) that of total mass $M_1(t)$; all show excellent agreement with the exact solutions.
Table \ref{tab:AggGrowth} presents the corresponding error and convergence data.
Third-order accuracy is again observed in all norms.

\begin{figure}[!htbp]
 \centering
 \begin{subfigure}[b]{0.32\textwidth}
  \includegraphics[width=\textwidth]{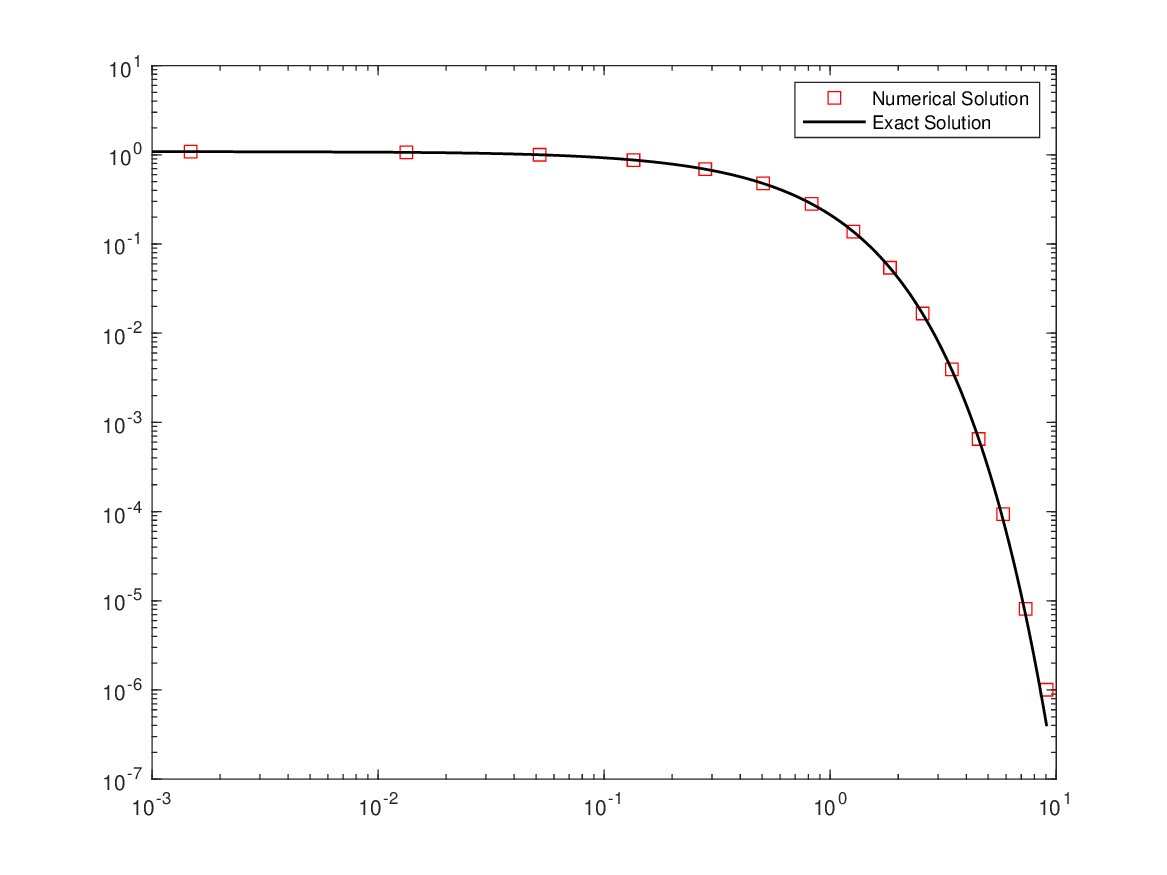}
  \caption{Case I: $n_h(v,t)$ at $t=1$}
 \end{subfigure}
 \begin{subfigure}[b]{0.32\textwidth}
  \includegraphics[width=\textwidth]{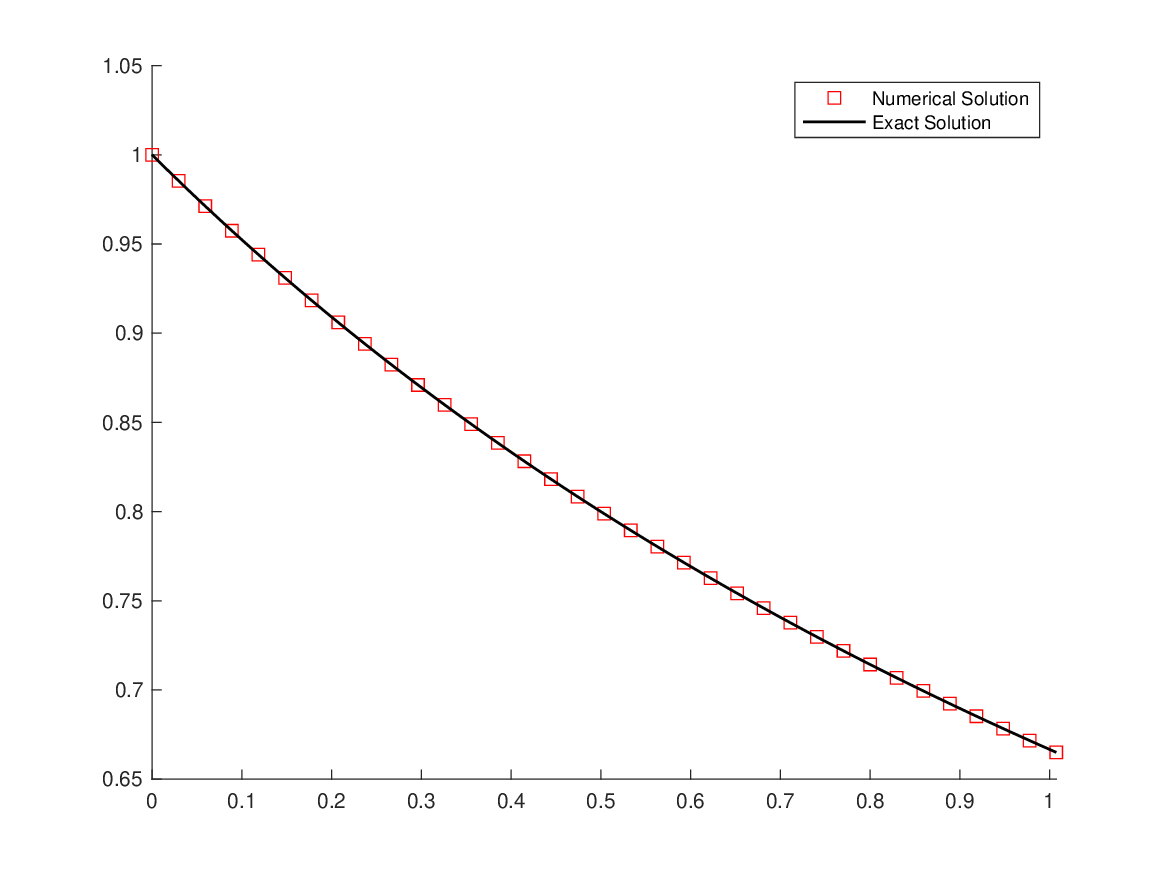}
  \caption{Case I: $M_0$ over time}
 \end{subfigure}
 \begin{subfigure}[b]{0.32\textwidth}
  \includegraphics[width=\textwidth]{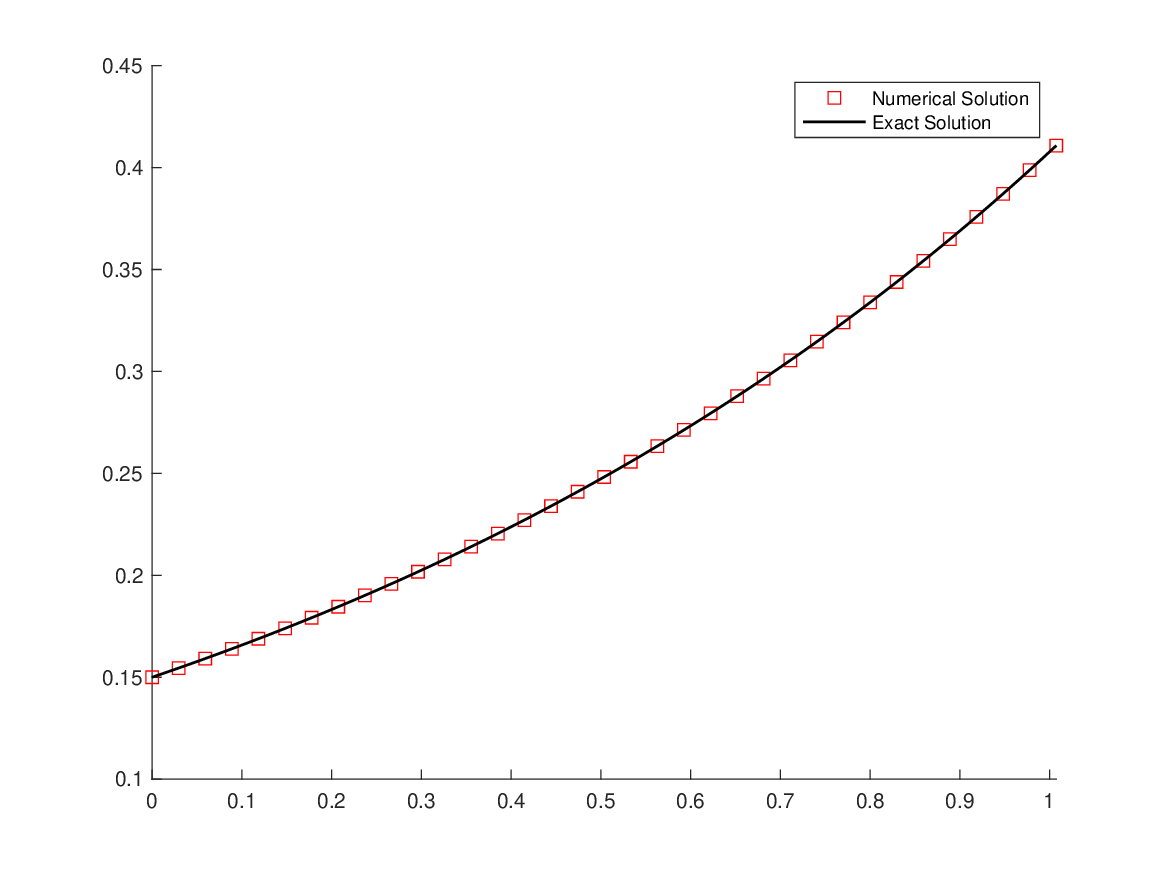}
  \caption{Case I: $M_1$ over time}
 \end{subfigure}

 \begin{subfigure}[b]{0.32\textwidth}
  \includegraphics[width=\textwidth]{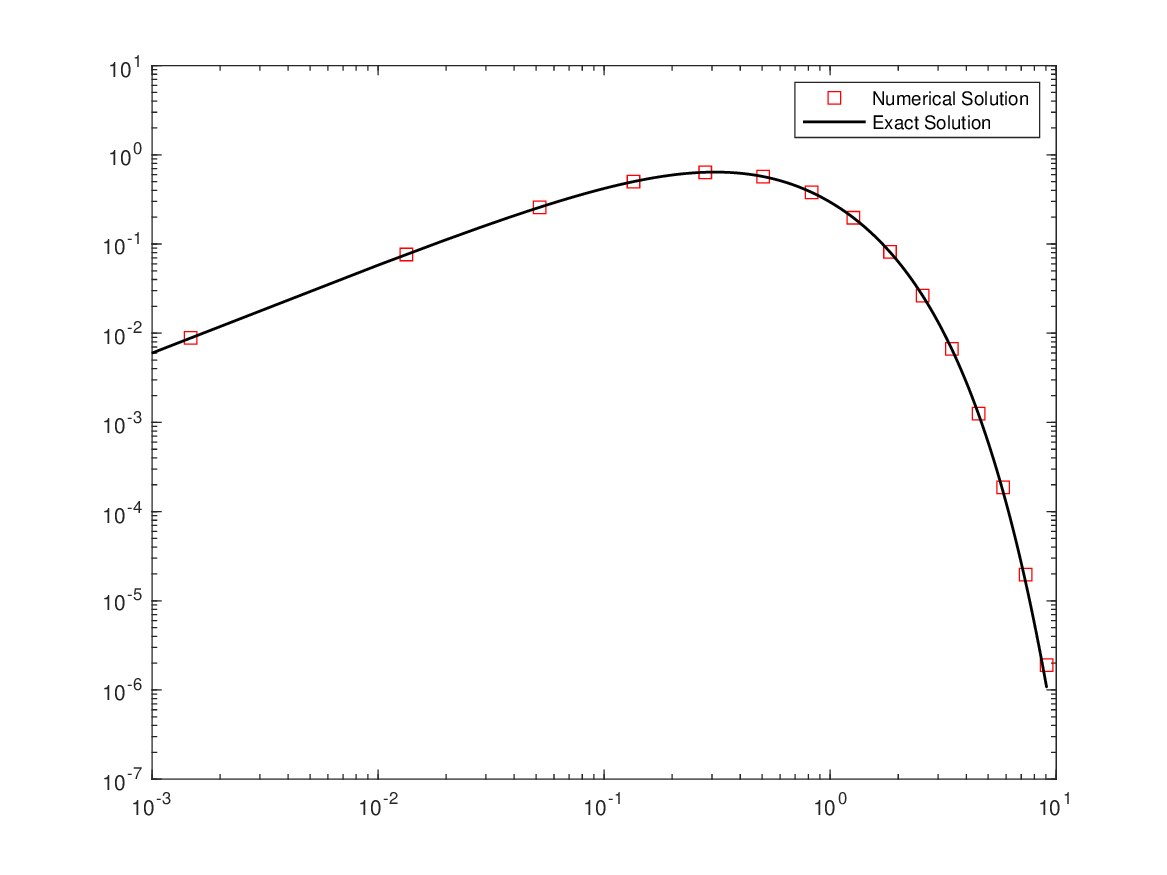}
  \caption{Case II: $n_h(v,t)$ at $t=1$}
 \end{subfigure}
 \begin{subfigure}[b]{0.32\textwidth}
  \includegraphics[width=\textwidth]{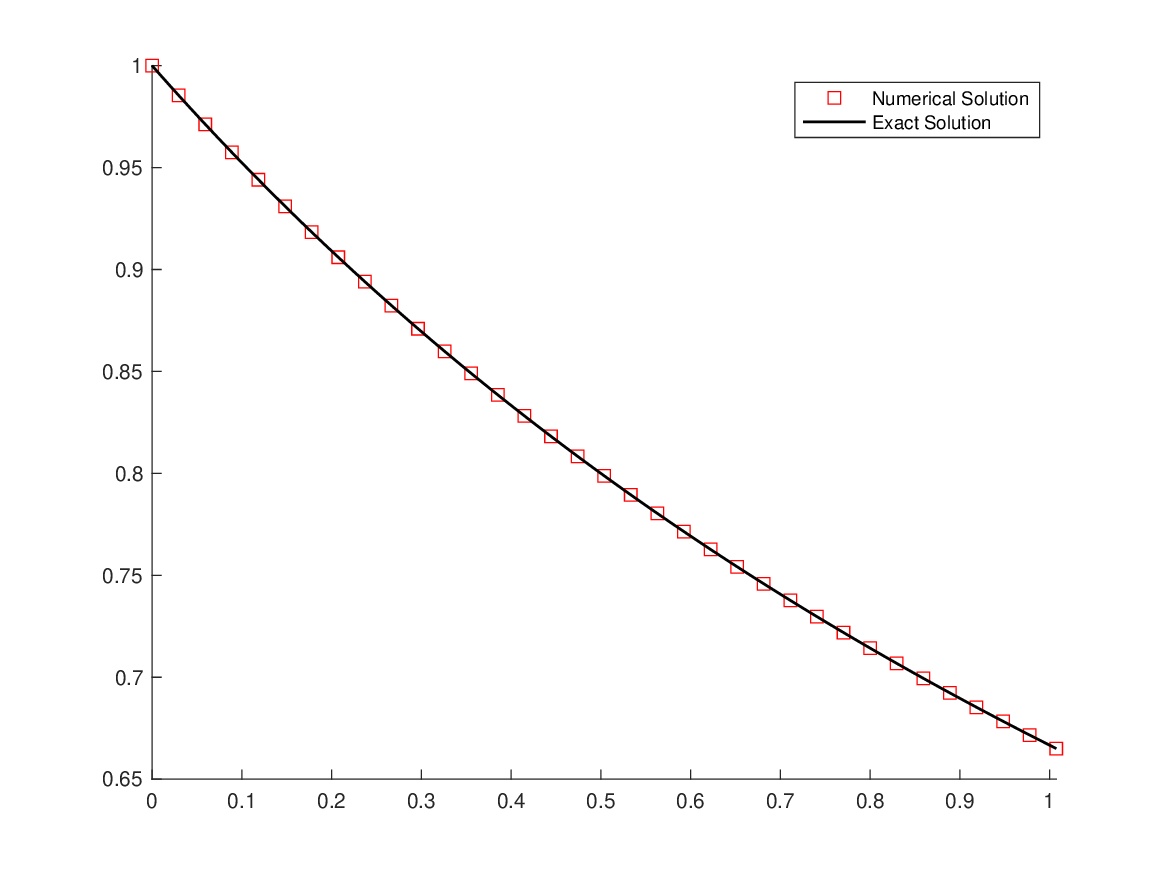}
  \caption{Case II: $M_0$ over time}
 \end{subfigure}
 \begin{subfigure}[b]{0.32\textwidth}
  \includegraphics[width=\textwidth]{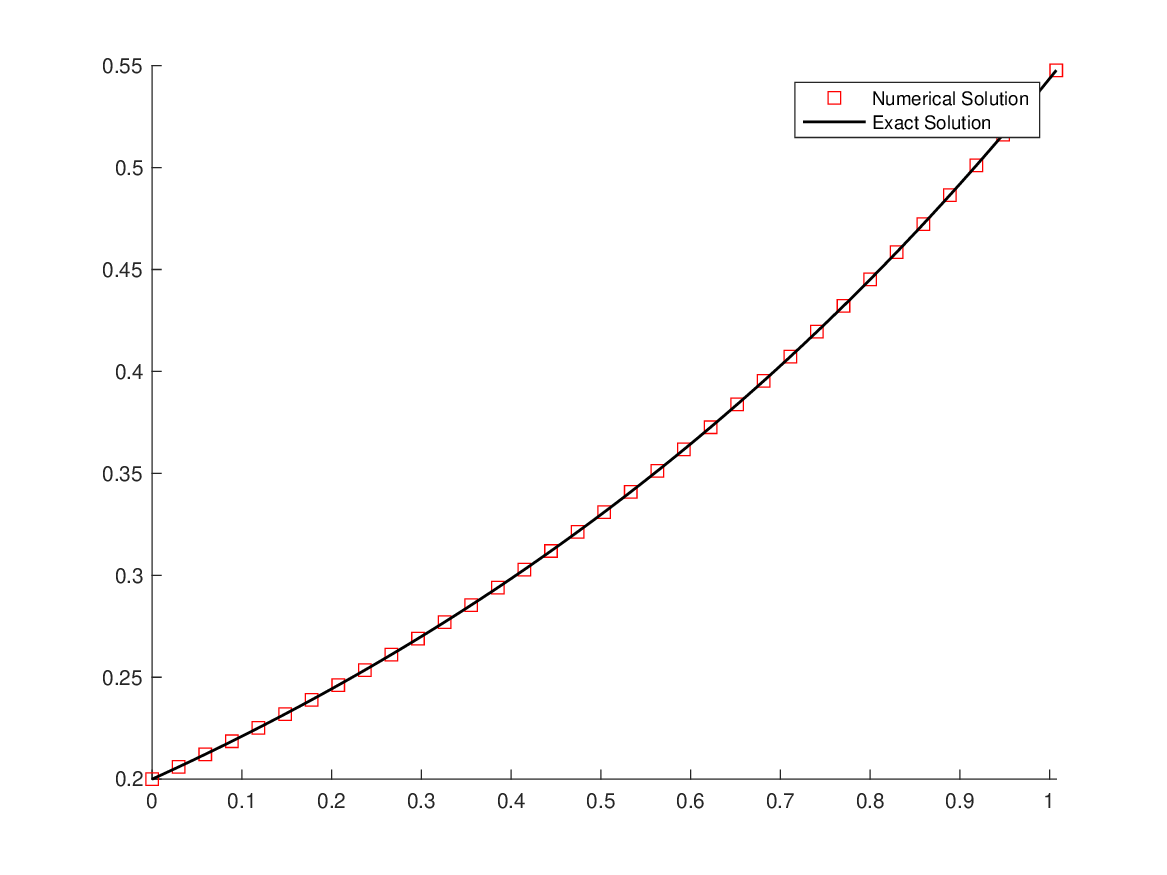}
  \caption{Case II: $M_1$ over time}
 \end{subfigure}

 \begin{subfigure}[b]{0.32\textwidth}
  \includegraphics[width=\textwidth]{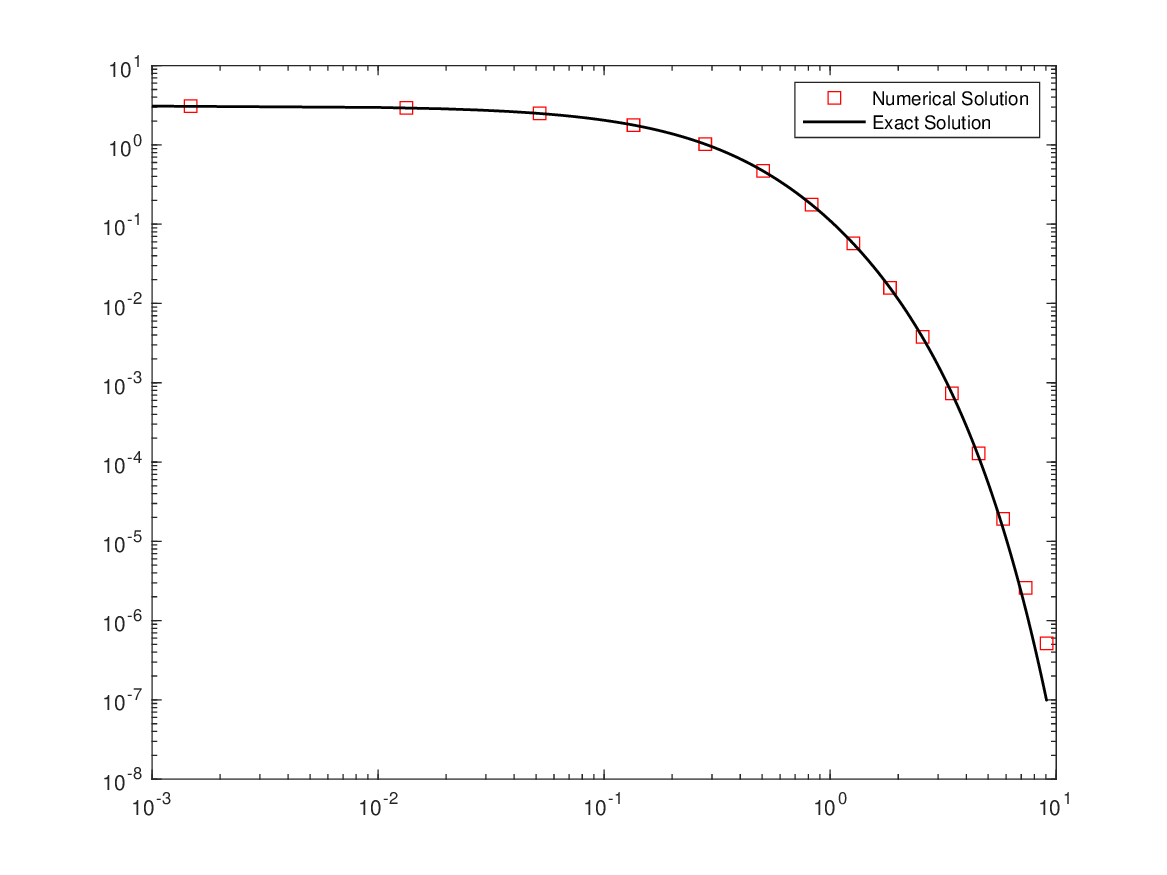}
  \caption{Case III: $n_h(v,t)$ at $t=1$}
 \end{subfigure}
 \begin{subfigure}[b]{0.32\textwidth}
  \includegraphics[width=\textwidth]{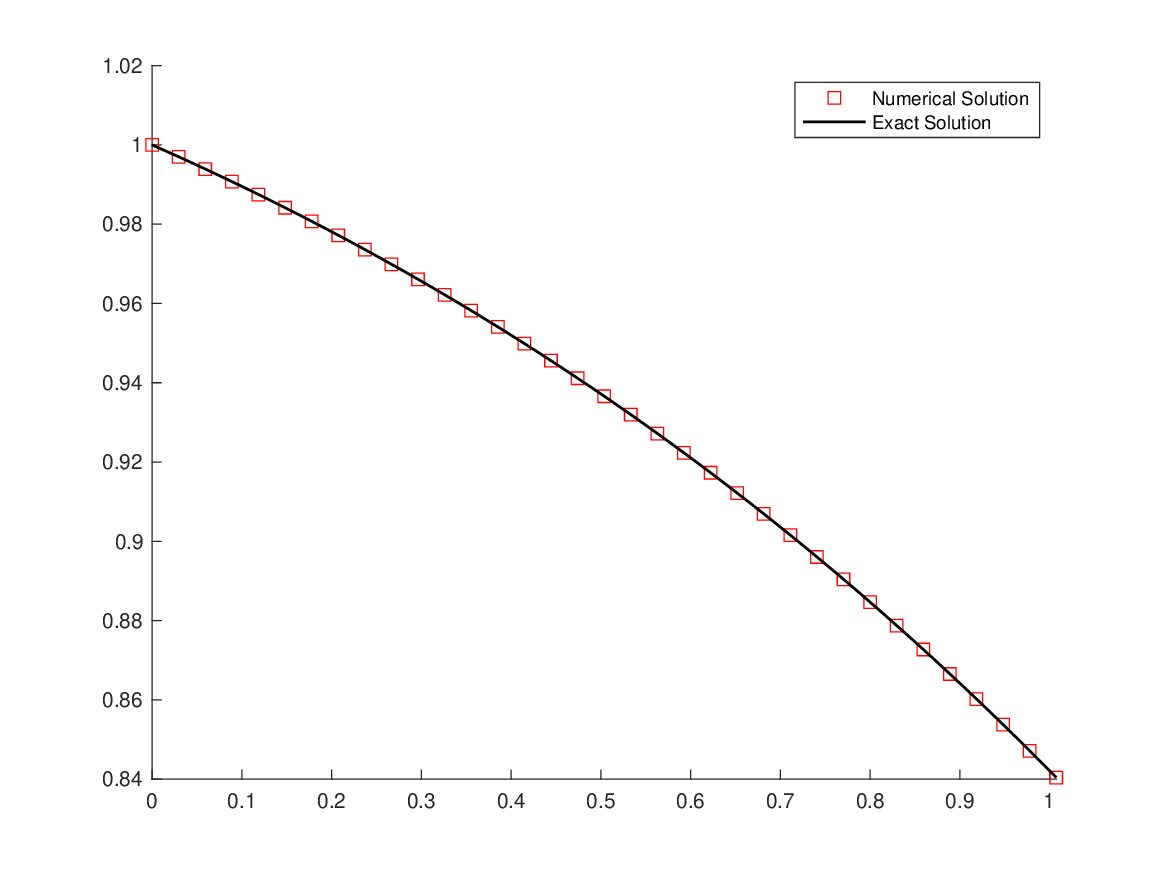}
  \caption{Case III: $M_0$ over time}
 \end{subfigure}
 \begin{subfigure}[b]{0.32\textwidth}
  \includegraphics[width=\textwidth]{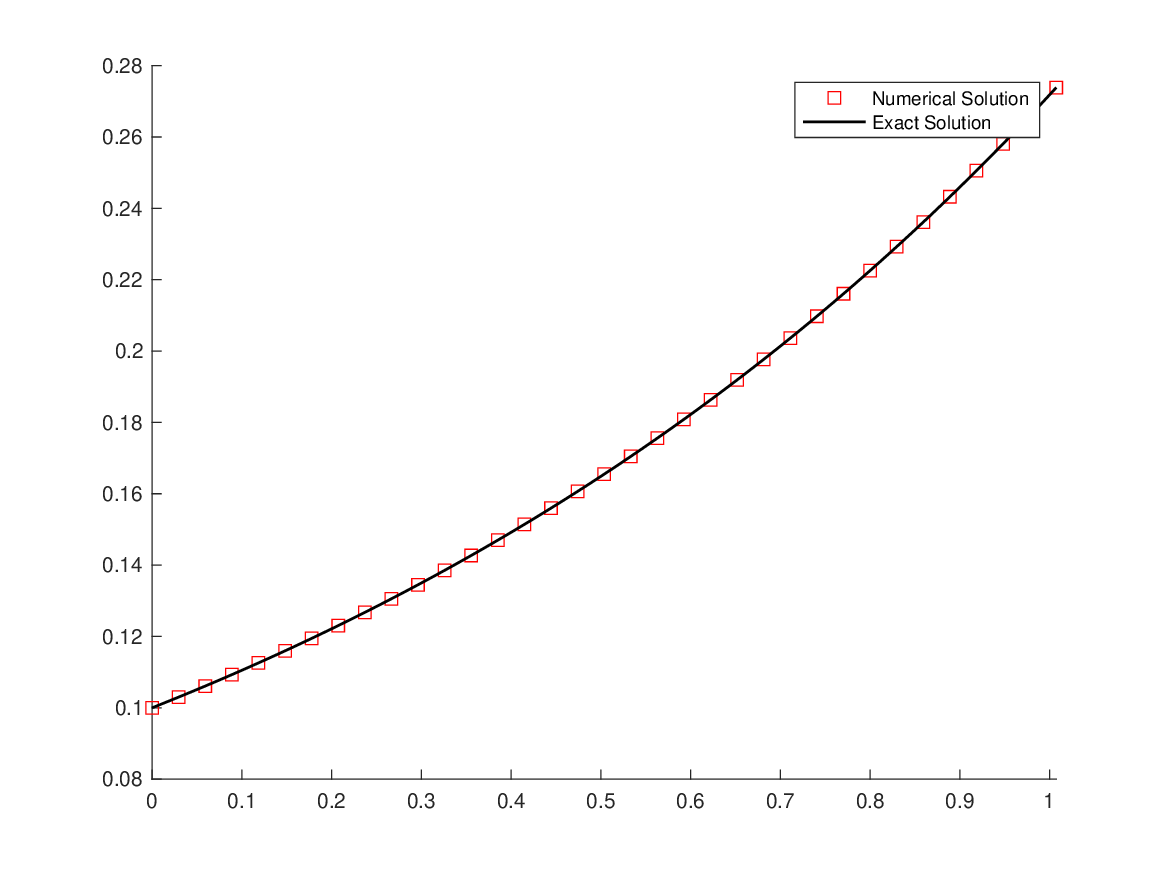}
  \caption{Case III: $M_1$ over time}
 \end{subfigure}
\caption{\textbf{Example~\ref{ex:AggGrowth}: Aggregation--growth.}
Left: numerical and exact number densities; middle: evolution of the total number over time; right: evolution of the total mass over time.
A nonuniform mesh with $L = 15$ cells is used.}
\label{fig:AggGrowth}
\end{figure}

\begin{table}[htbp]
  \centering
  \renewcommand{\arraystretch}{1.2}   
  \setlength{\tabcolsep}{5pt}         
  \begin{tabular}{c*{6}{c}}
    \toprule
    \multicolumn{7}{c}{\textbf{Case I}}\\
    \midrule
    Level & $||n-n_h||_{L^1}$ & Order & $||n-n_h||_{L^2}$ & Order & $||n-n_h||_{L^{\infty}}$ & Order\\
    \midrule
      0 & $7.17\times10^{-4}$ & --   & $4.93\times10^{-4}$ & --   & $8.38\times10^{-4}$ & --  \\
      1 & $9.38\times10^{-5}$ & 2.93 & $6.56\times10^{-5}$ & 2.91 & $1.57\times10^{-4}$ & 2.41\\
      2 & $1.11\times10^{-5}$ & 3.09 & $7.78\times10^{-6}$ & 3.08 & $1.65\times10^{-5}$ & 3.26\\
      3 & $1.33\times10^{-6}$ & 3.05 & $9.34\times10^{-7}$ & 3.06 & $2.18\times10^{-6}$ & 2.92\\
    \midrule
    \multicolumn{7}{c}{\textbf{Case II}}\\
    \midrule
    Level & $||n-n_h||_{L^1}$ & Order & $||n-n_h||_{L^2}$ & Order & $||n-n_h||_{L^{\infty}}$ & Order\\
    \midrule
      0 & $1.48\times10^{-3}$ & --   & $1.25\times10^{-3}$ & --   & $3.46\times10^{-3}$ & -- \\
      1 & $1.86\times10^{-4}$ & 2.99 & $1.66\times10^{-4}$ & 2.91 & $4.68\times10^{-4}$ & 2.88 \\
      2 & $2.19\times10^{-5}$ & 3.09 & $1.99\times10^{-5}$ & 3.06 & $8.29\times10^{-5}$ & 2.50 \\
      3 & $2.62\times10^{-6}$ & 3.06 & $2.32\times10^{-6}$ & 3.10 & $1.02\times10^{-5}$ & 3.02 \\
    \midrule
    \multicolumn{7}{c}{\textbf{Case III}}\\
    \midrule
    Level & $||n-n_h||_{L^1}$ & Order & $||n-n_h||_{L^2}$ & Order & $||n-n_h||_{L^{\infty}}$ & Order\\
    \midrule
      0 & $1.90\times10^{-3}$ & --   & $2.03\times10^{-3}$ & --   & $6.63\times10^{-3}$ & --  \\
      1 & $2.50\times10^{-4}$ & 2.93 & $2.74\times10^{-4}$ & 2.89 & $8.98\times10^{-4}$ & 2.88\\
      2 & $3.11\times10^{-5}$ & 3.01 & $3.39\times10^{-5}$ & 3.02 & $1.36\times10^{-4}$ & 2.72\\
      3 & $3.82\times10^{-6}$ & 3.02 & $4.12\times10^{-6}$ & 3.04 & $1.69\times10^{-5}$ & 3.01\\      
    \bottomrule
  \end{tabular}
    \caption{\textbf{Example~\ref{ex:AggGrowth}: Aggregation--growth.} Error table for the test on the PBE involving aggregation-growth process. Level~$0$ represents a baseline mesh with $L = 15$ cells, and level~$\ell$ is obtained by equally splitting the cells of mesh level~$\ell-1$, for $\ell = 1, 2, 3$.}
\label{tab:AggGrowth}
\end{table}

\subsection{More complex cases}
Expressions for the aggregation kernels encountered in practice are usually highly nonlinear, making analytical solutions unavailable.
To obtain benchmark curves, we therefore solve the \emph{discrete population balance equation} (DPBE) \cite{smoluchowski1918}:
\begin{equation}
\left\{
\begin{aligned}
\frac{\dd n_1}{\dd t}=&-n_1\sum_{j=1}^{\infty}\beta_{1j}n_j,\\
\frac{\dd n_i}{\dd t}=&\frac12\sum_{j=1}^{i-1}\beta_{j,i-j}n_jn_{i-j}-n_i\sum_{j=1}^\infty\beta_{ij}n_j,\qquad i\ge2,    
\end{aligned}\right.
\end{equation}
on a fine uniform grid $v_i=i\Delta v$, where $\beta_{ij}=\beta(v_i,v_j)$ and $n_i(t)$ denotes the particle count in class $i$.
It can be shown \cite{drake1972} that the DPBE converges to the continuous equation as $\Delta v\rightarrow0$.
However, since this convergence is only first-order in $\Delta v$, computing high-accuracy reference solutions using the DPBE is computationally prohibitive.
We therefore do not perform a formal convergence study in this subsection, as extensive accuracy tests have already been presented in the previous subsection.
Instead, the DPBE solutions are used only to verify the qualitative correctness of the DG results.

\begin{exmp}\label{ex:complex}
\textbf{Aggregation with physical kernels}
\end{exmp}
We consider aggregation processes with the following physically derived kernels \cite{Friedlander2000}:
\begin{itemize}
\item Case I: Free-molecule kernel
\begin{equation}\label{eq:kernel_free}
\beta(u,w)=\Big(\frac{1}{u}+\frac{1}{w} \Big)^{\frac12}\Big(u^{1/3}+w^{1/3}\Big)^{2},
\end{equation}
valid in the free-molecular regime, where the gas mean free path greatly exceeds the particle diameter.
\item Case II: Brownian kernel
\begin{equation}\label{eq:kernel_brown}
\beta(u,w)=\Big(\frac{1}{v^{1/3}}+\frac{1}{w^{1/3}}\Big)\Big( v^{1/3} + w^{1/3} \Big),
\end{equation}
appropriate for the continuum (Brownian) regime, where the mean free path is small compared with the particle diameter.   
\item Case III: Gravitational kernel
\begin{equation}\label{eq:kernel_gravity}
\beta(u,w)=\Big(v^{1/3}+w^{1/3}\Big)^2\Big|v^{2/3}-w^{2/3}\Big|,
\end{equation}
representing differential gravitational settling, in which larger particles overtake and collect smaller ones as they fall.
\end{itemize}
All cases use the same exponential initial condition:
\begin{equation}
n(v,0)=\frac{1}{v_0}\exp\left(-\frac{v}{v_0}\right).
\end{equation}
In all three tests, we take $v_0=0.15$.

We reuse the domain, mesh and time step described in Example \ref{ex:Single_process}.  
Each simulation is run to $t=1$.
Figure \ref{fig:complex} compares the numerical number densities at $t=1$ with the reference ones obtained from DPBE (left panel) and reports the deviation of the total mass over time (right panel).  
The overlap of the curves and the machine-level mass preservation confirm both accuracy and conservation.

\begin{figure}[!htbp]
 \centering
 \begin{subfigure}[b]{0.32\textwidth}
  \includegraphics[width=\textwidth]{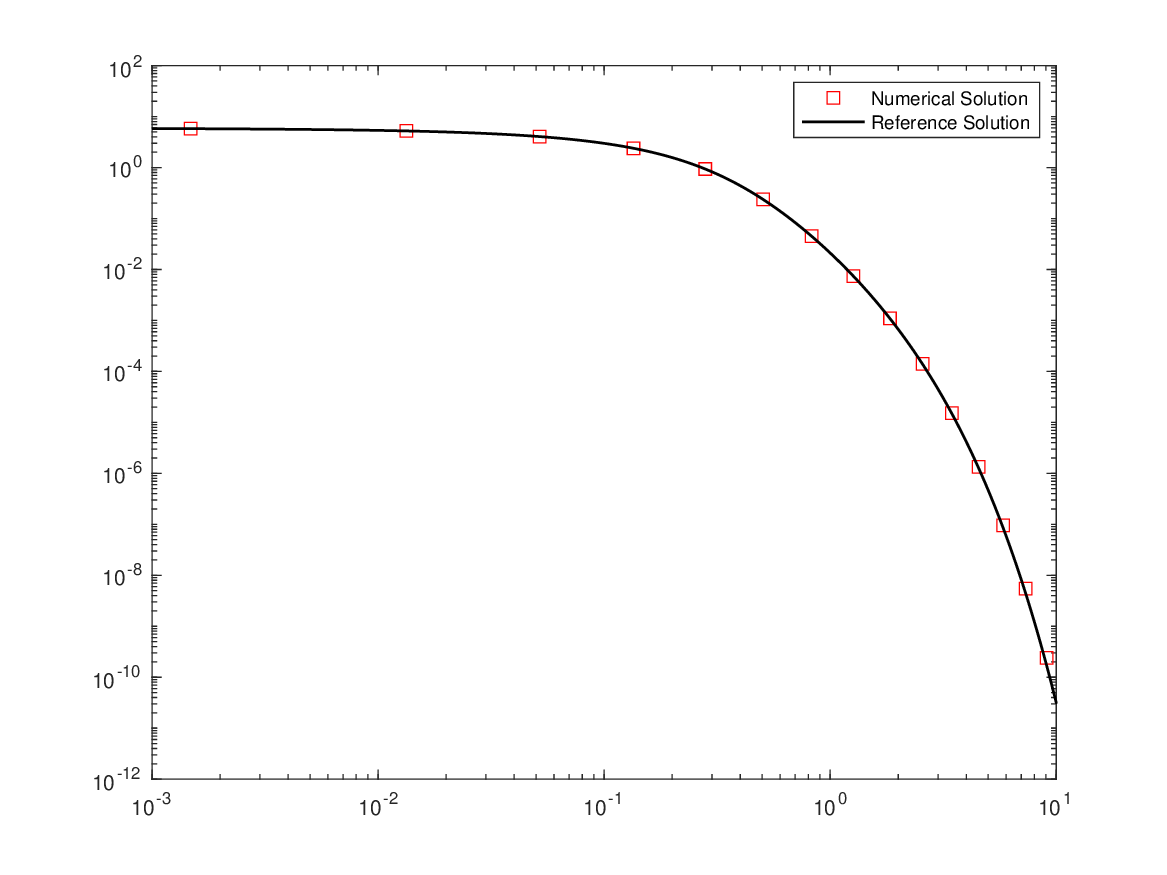}
  \caption{Case I: $n_h(v,t)$ at $t=1$}
 \end{subfigure}
 \begin{subfigure}[b]{0.32\textwidth}
  \includegraphics[width=\textwidth]{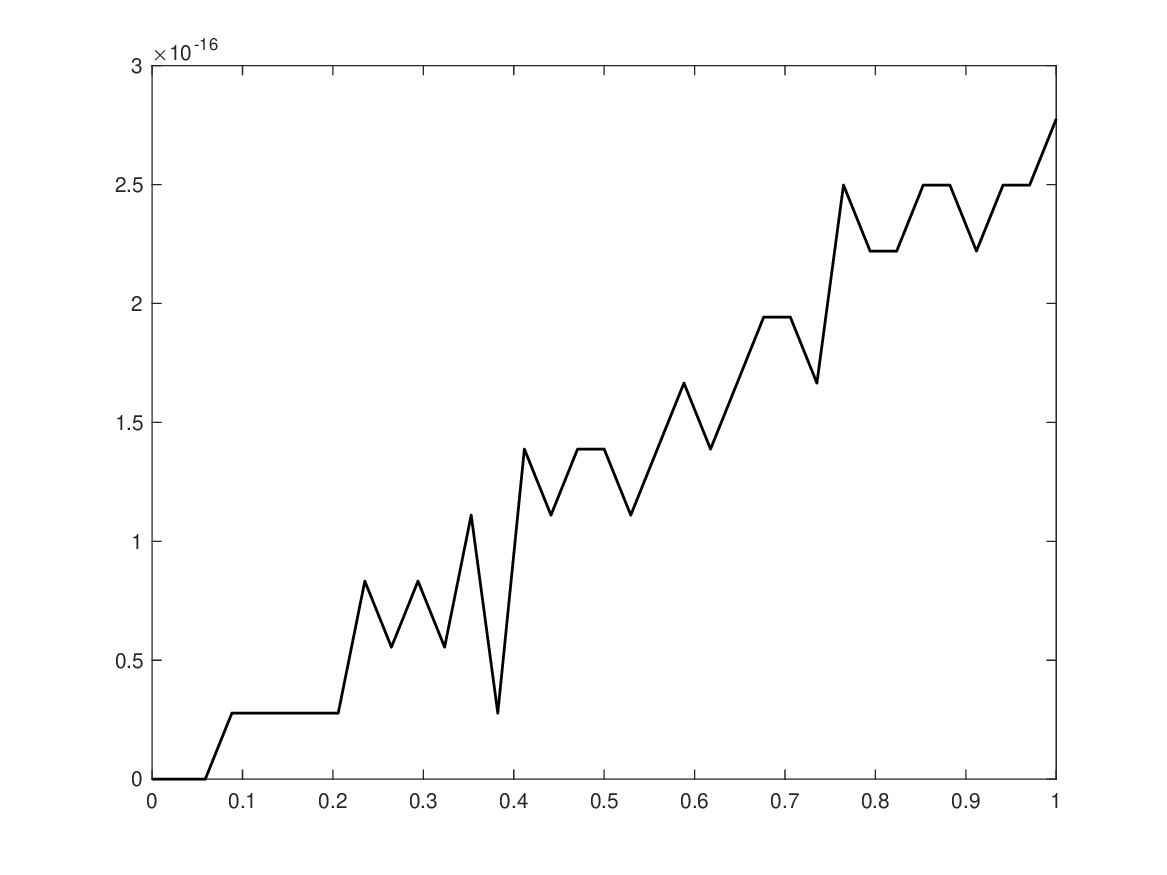}
  \caption{Case I: Total mass deviation}
 \end{subfigure}

 \begin{subfigure}[b]{0.32\textwidth}
  \includegraphics[width=\textwidth]{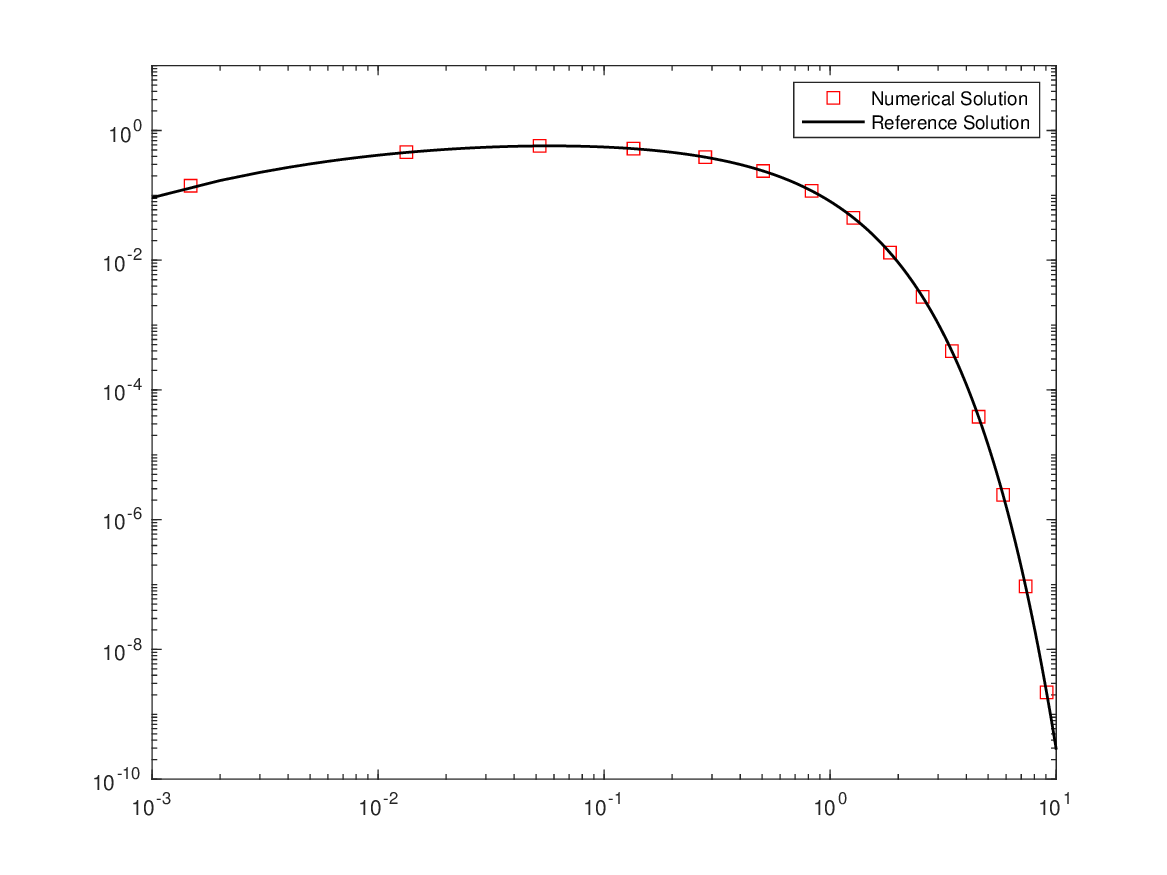}
  \caption{Case II: $n_h(v,t)$ at $t=1$}
 \end{subfigure}
 \begin{subfigure}[b]{0.32\textwidth}
  \includegraphics[width=\textwidth]{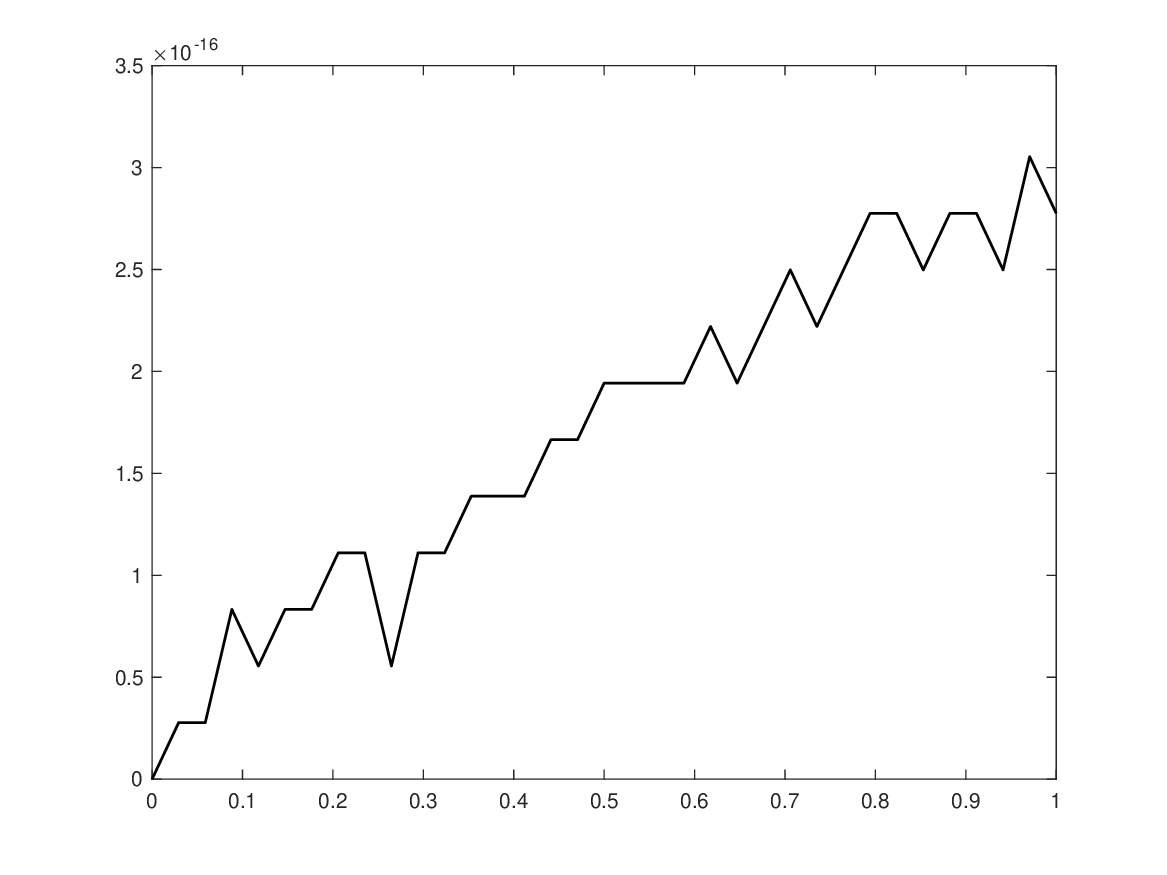}
  \caption{Case II: Total mass deviation}
 \end{subfigure}

 \begin{subfigure}[b]{0.32\textwidth}
  \includegraphics[width=\textwidth]{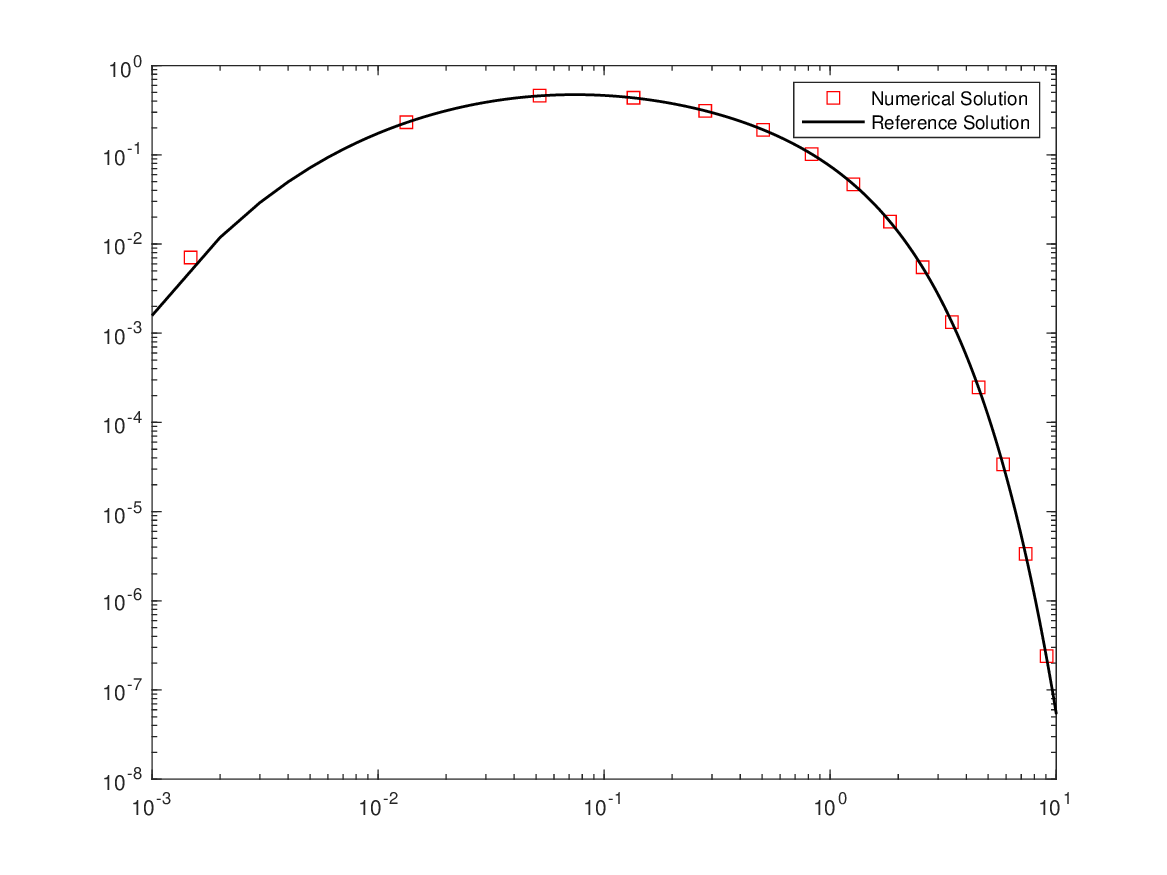}
  \caption{Case III: $n_h(v,t)$ at $t=1$}
 \end{subfigure}
 \begin{subfigure}[b]{0.32\textwidth}
  \includegraphics[width=\textwidth]{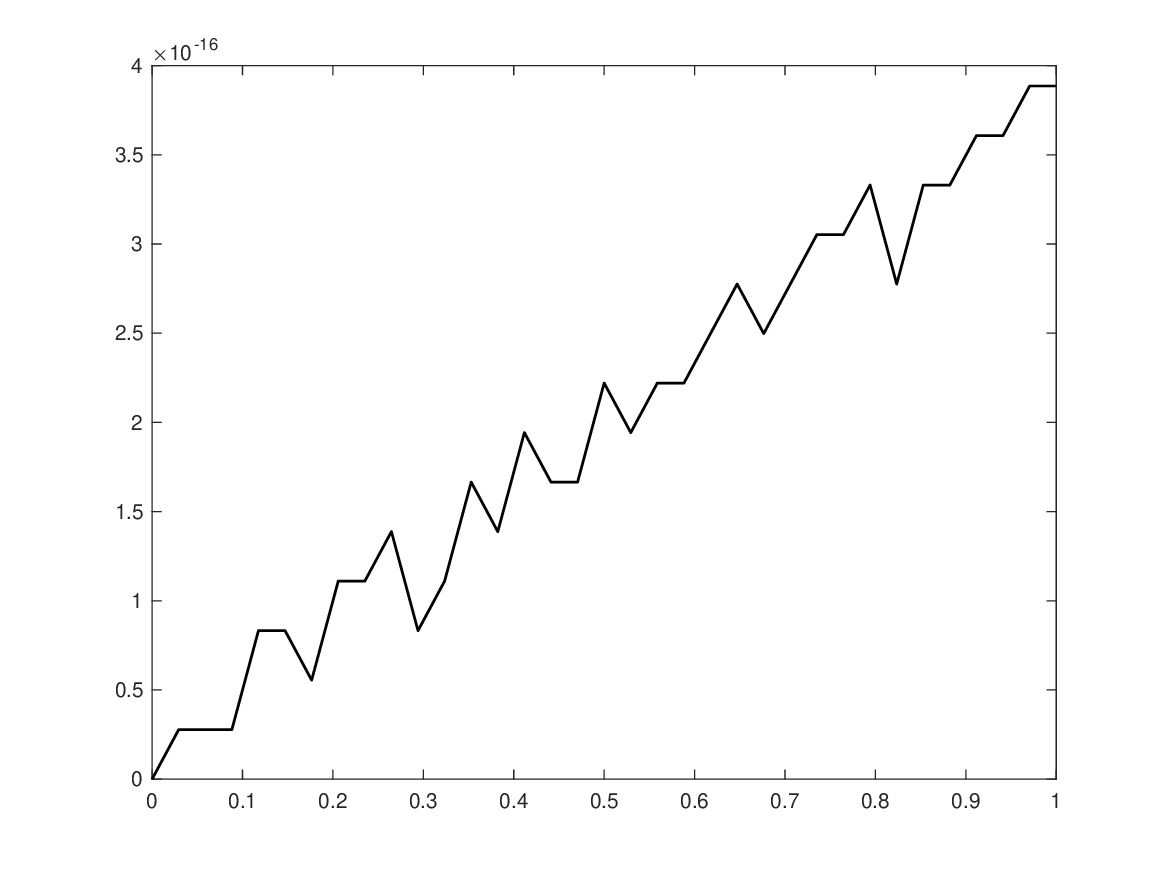}
  \caption{Case III: Total mass deviation}
 \end{subfigure} 
\caption{\textbf{Example~\ref{ex:complex}: Physical kernels.} Left: numerical and reference number densities; right: deviation of the total mass over time. A nonuniform mesh with $L = 15$ cells is used.}
\label{fig:complex}
\end{figure}

\subsection{Positivity-preserving tests}
In this subsection, we highlight the importance of the positivity-preserving technique in simulating the population balance equation.
In all previous tests, the positivity-preserving limiter was employed.
The resulting numerical solutions were nonnegative and exhibited exact mass conservation.
In those tests, we observed that several examples yielded negative number densities when the limiter was turned off, which is physically unacceptable.
More importantly, we examine a test in which the use of the positivity-preserving technique is essential: without it, the numerical simulation fails entirely.

\begin{exmp}\label{ex:pptest}
\textbf{Effectiveness of positivity preservation}
\end{exmp}
We revisit the aggregation--breakage PBE considered in Example~\ref{ex:AggBreak}.  
The parameters are set as 
\begin{itemize}
    \item Case I: 
    \begin{equation}
        \lambda_0=1,\quad \lambda_{\infty}=\frac12 \quad(\text{aggregation-dominated regime})
    \end{equation}
    \item Case II:
    \begin{equation}
        \lambda_0=1,\quad \lambda_{\infty}=1 \quad(\text{aggregation-breakage balanced regime})
    \end{equation}
    \item Case III:
    \begin{equation}
        \lambda_0=1,\quad \lambda_{\infty}=2 \quad(\text{breakage-dominated regime})        
    \end{equation}
\end{itemize}
The computational domain is $\mathcal{D} = [0, 10^{3}]$.  
We employ the $\mathcal{P}^{4}$-DG scheme ($k = 4$) with $L = 11$ cells, using a partition with grid points $v_{\frac12} = 0$ and $v_{i+\frac12}$, $i = 1, 2, \ldots, L$ uniformly distributed in log scale between $10^{-3}$ and $10^{3}$.
We perform a long-term simulation up to $t = 10$.  

In these tests, we observe that the DG method equipped with the positivity-preserving limiter remains stable with a time step of $\Delta t = 0.02$ across all cases and yields results consistent with the analytical solutions.
In contrast, the DG method without the limiter becomes unstable and blows up rapidly under the same CFL condition.  
Further experiments show that only when the time step is reduced to $\Delta t = 0.002$ the results of the non-limited DG method are comparable to those of the positivity-preserving DG scheme with $\Delta t = 0.02$.
For Case I, Figure \ref{fig:pptest1} displays the number density and mass density distributions over particle sizes, as well as the $L^1$-error over time, for the limited DG scheme ($\Delta t = 0.02$) at the final time $t = 10$, and for the non-limited DG scheme ( $\Delta t = 0.003$) at $t = 5.04$ right before the blow-up time.
Cases~II and III exhibit similar behavior; to save space, we only present the comparison of $L^1$-error evolution over time in Figure \ref{fig:pptest2} for these two cases.  
These numerical results clearly demonstrate the enhanced robustness provided by the positivity-preserving limiter.

\begin{figure}[!htbp]
 \centering
 \begin{subfigure}[b]{0.32\textwidth}
  \includegraphics[width=\textwidth]{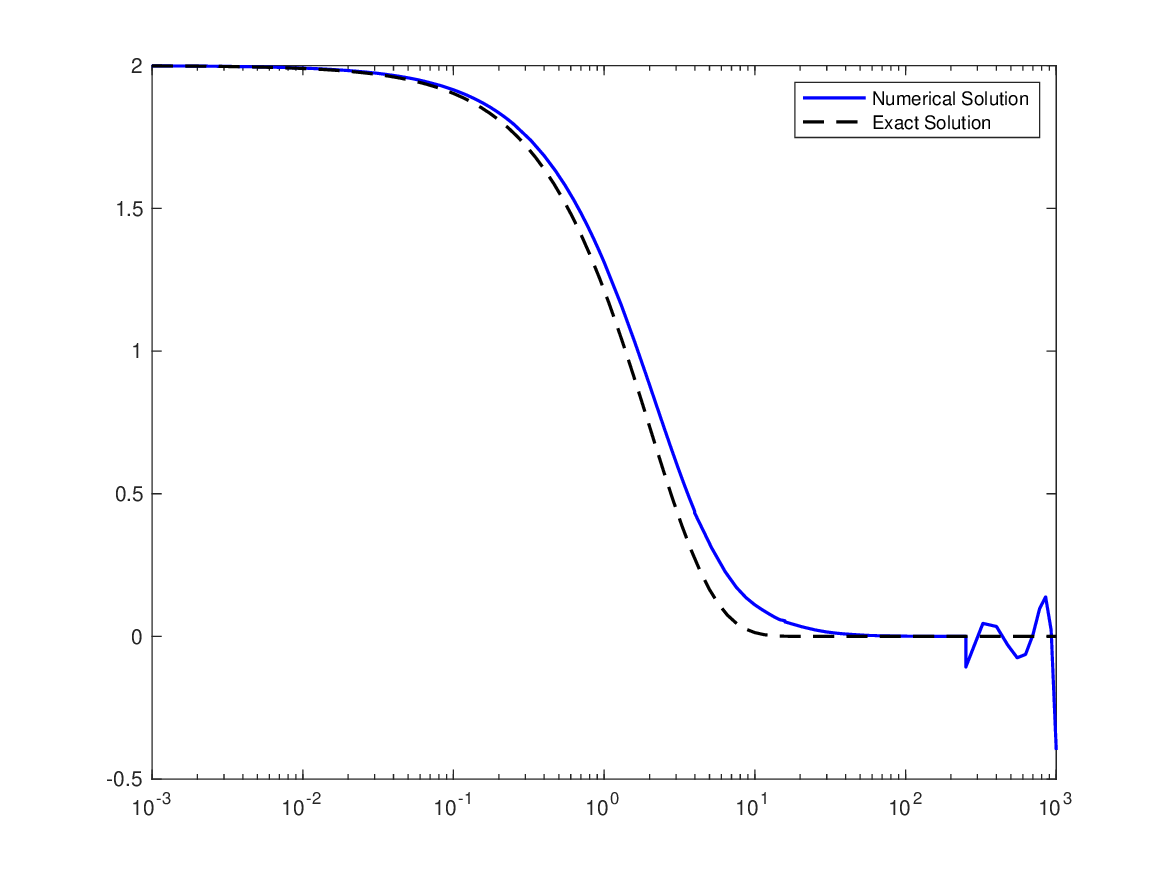}
  \caption{$n_h(v,t)$ at $t=5.04$}
 \end{subfigure}
 \begin{subfigure}[b]{0.32\textwidth}
  \includegraphics[width=\textwidth]{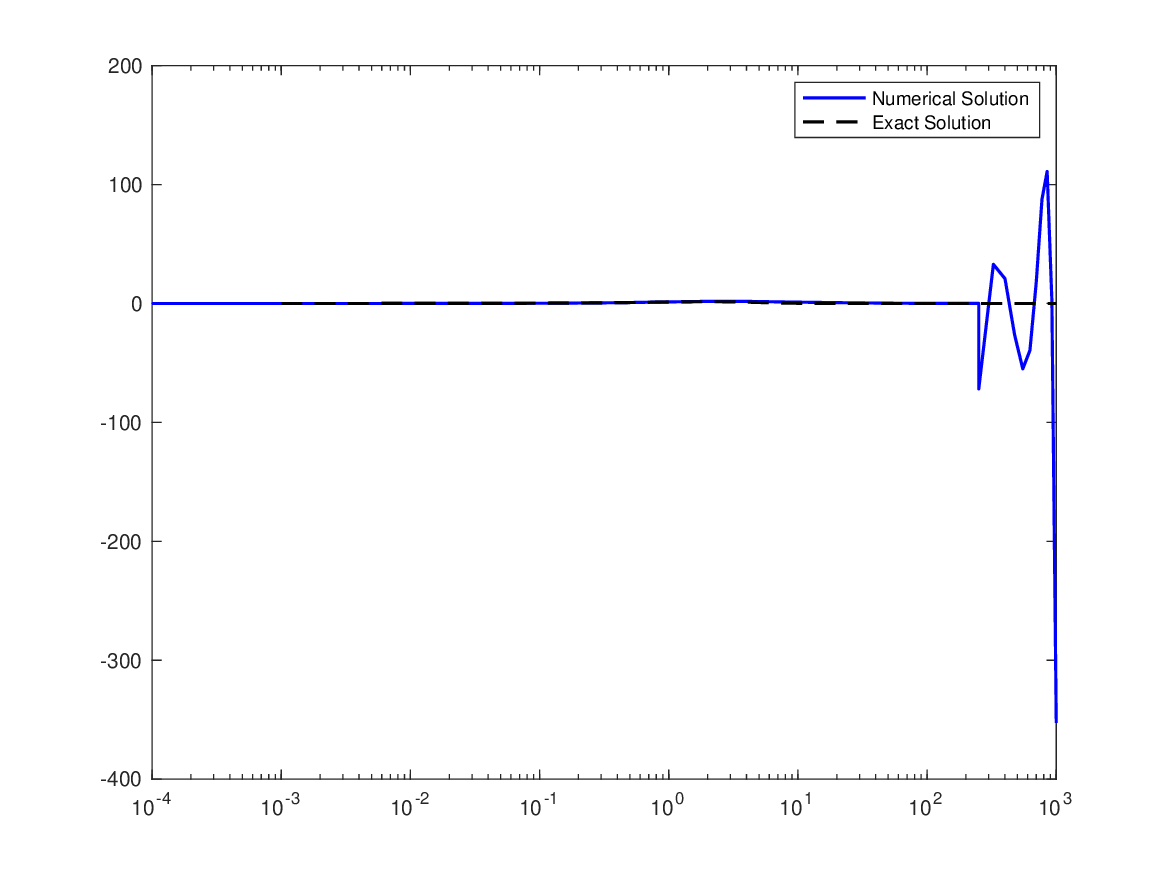}
  \caption{$v n_h(v,t)$ at $t=5.04$}
 \end{subfigure}
 \begin{subfigure}[b]{0.32\textwidth}
  \includegraphics[width=\textwidth]{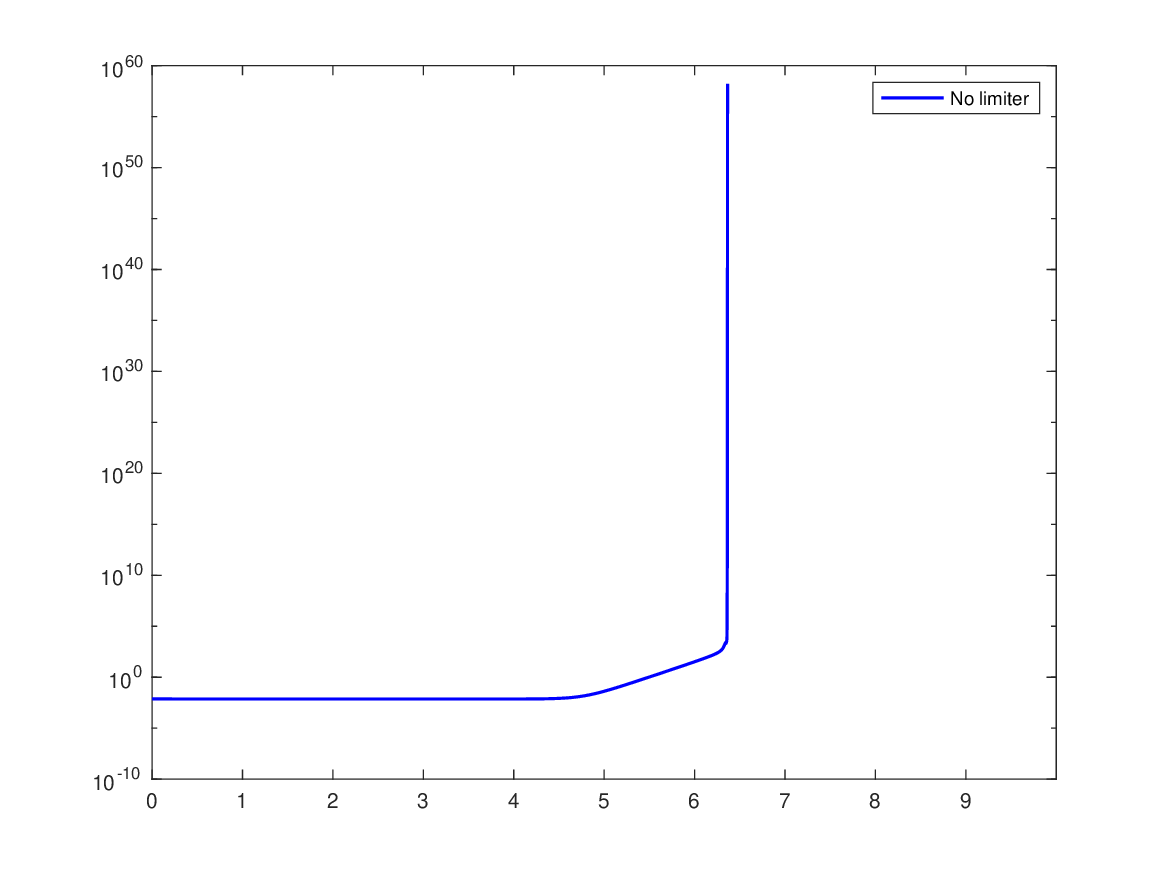}
  \caption{$L^1$-error over time}
 \end{subfigure}

 \begin{subfigure}[b]{0.32\textwidth}
  \includegraphics[width=\textwidth]{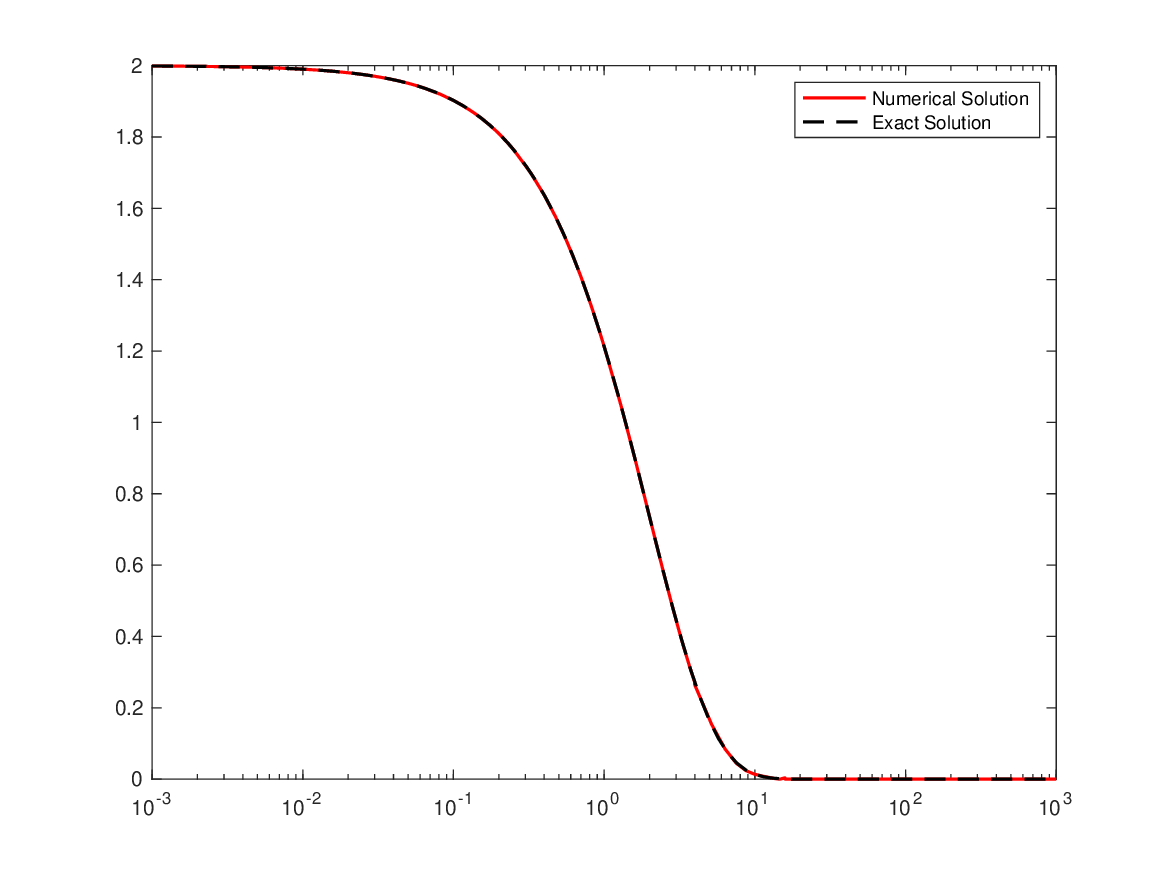}
  \caption{$n_h(v,t)$ at $t=10$}
 \end{subfigure}
 \begin{subfigure}[b]{0.32\textwidth}
  \includegraphics[width=\textwidth]{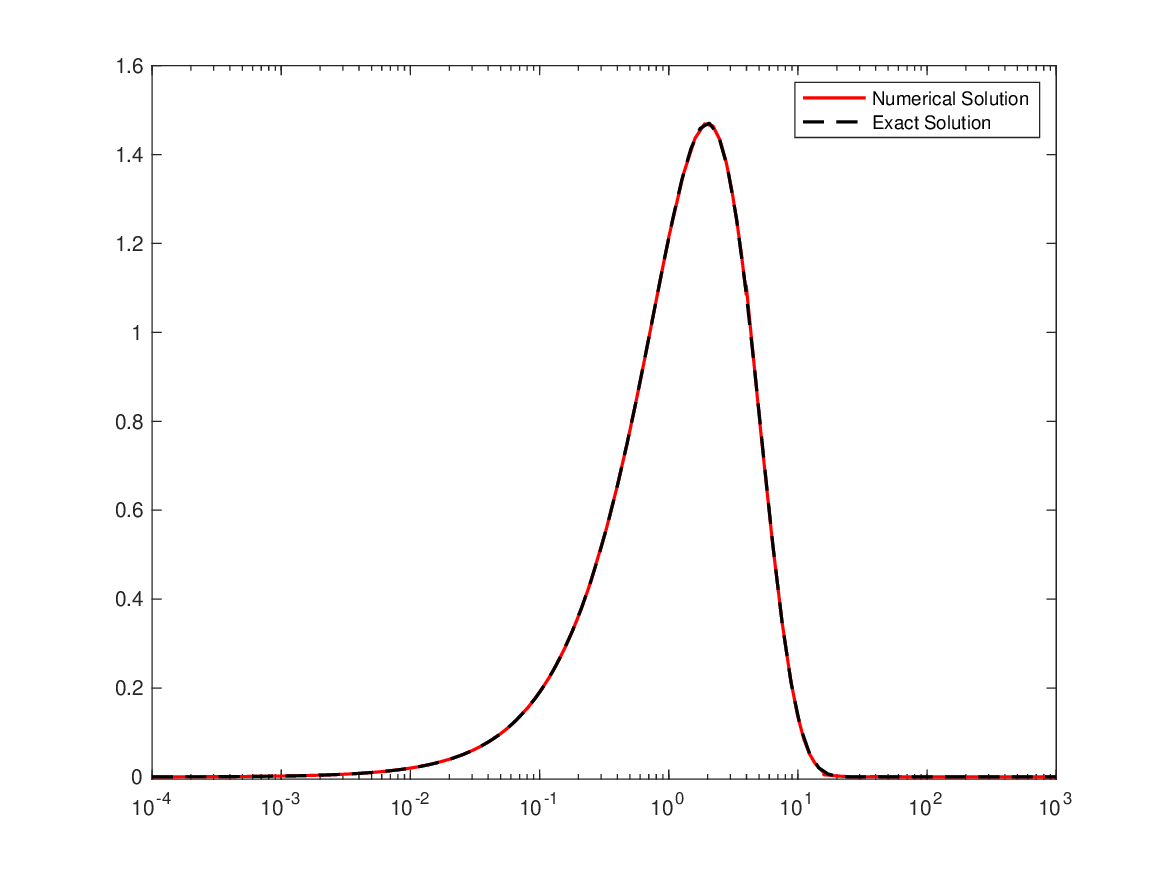}
  \caption{$v n_h(v,t)$ at $t=10$}
 \end{subfigure}
 \begin{subfigure}[b]{0.32\textwidth}
  \includegraphics[width=\textwidth]{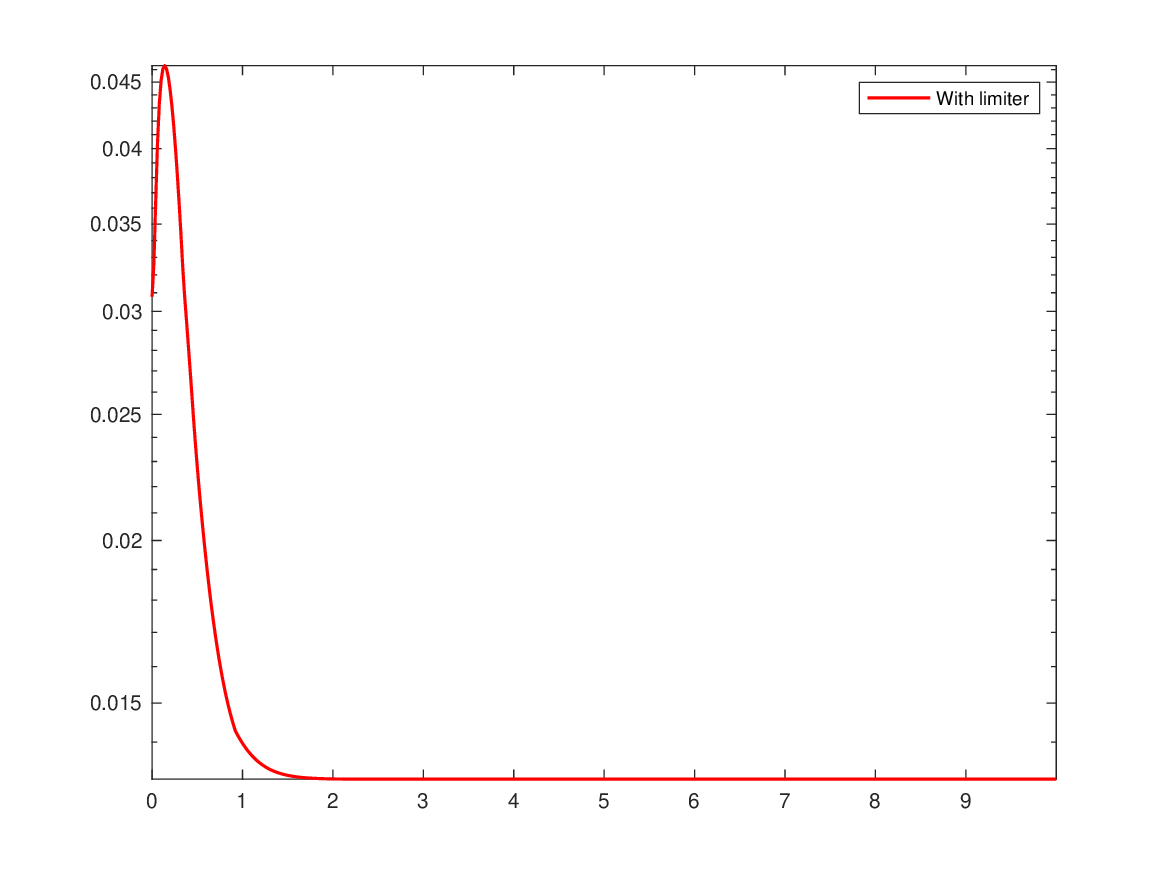}
  \caption{$L^1$-error over time}
 \end{subfigure}
\caption{\textbf{Example~\ref{ex:pptest}: Positivity test, Case~I.} Top row: DG solution ($\Delta t=0.003$) without limiter; Bottom row: DG solution ($\Delta t=0.02$) with limiter.}
\label{fig:pptest1}
\end{figure}

\begin{figure}[!htbp]
 \centering
 \begin{subfigure}[b]{0.32\textwidth}
  \includegraphics[width=\textwidth]{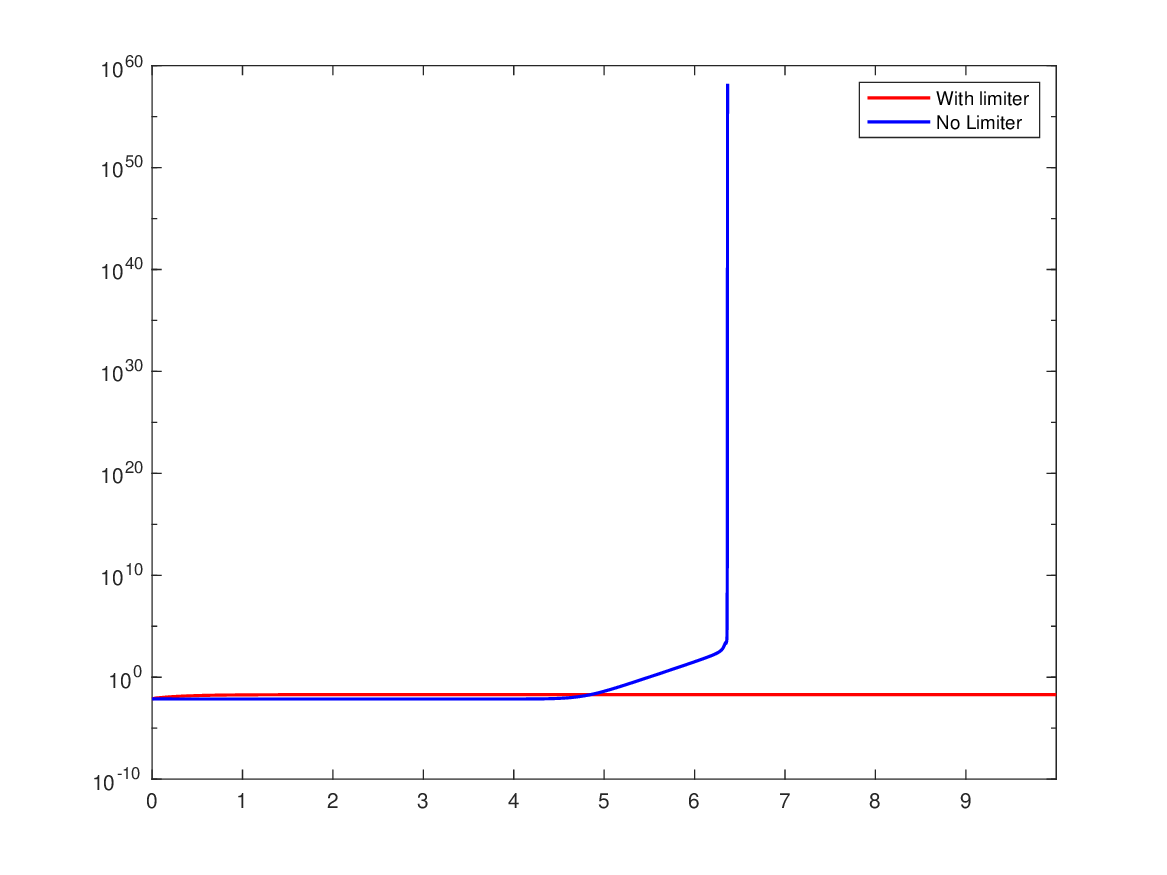}
  \caption{Case II: $L^{1}$-error over time}
 \end{subfigure}
 \begin{subfigure}[b]{0.32\textwidth}
  \includegraphics[width=\textwidth]{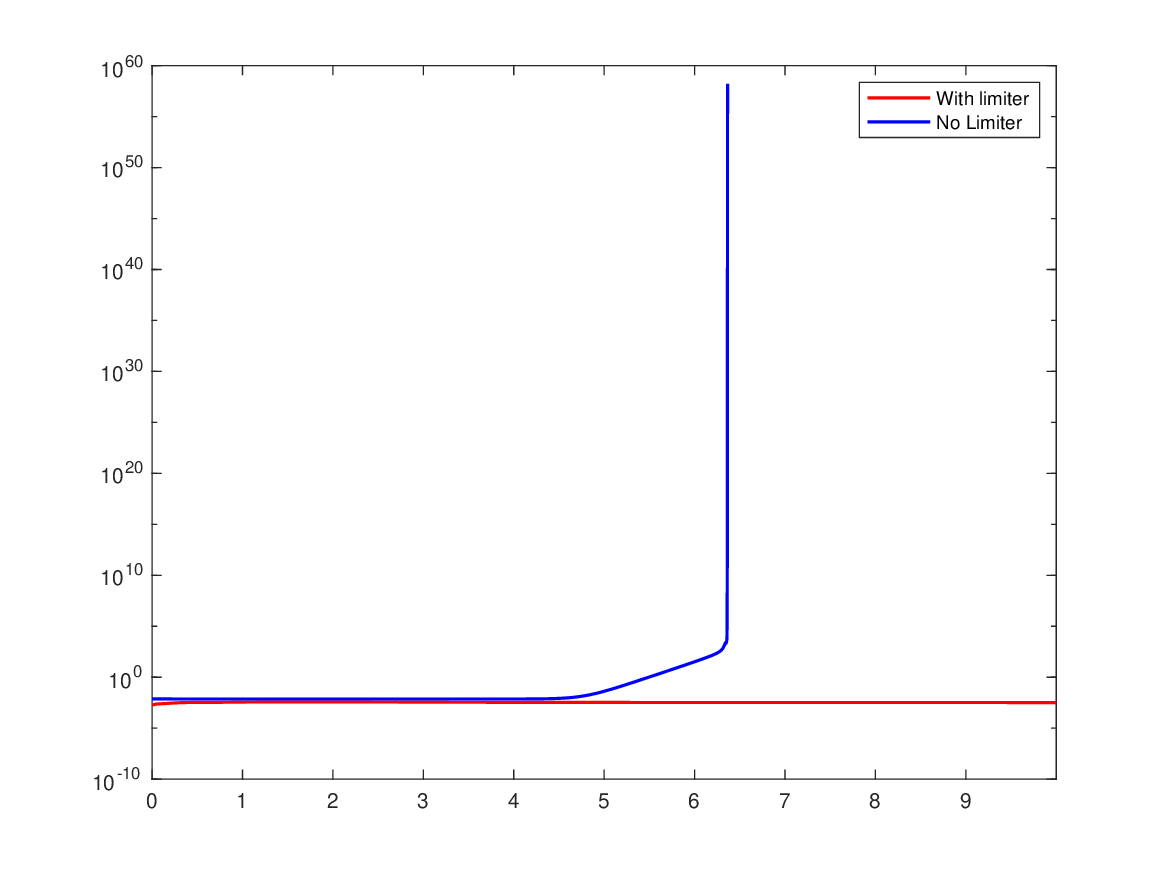}
  \caption{Case III: $L^{1}$-error over time}
 \end{subfigure}
\caption{\textbf{Example~\ref{ex:pptest}: Positivity test, Cases~II and III.}  
Comparison of $L^1$-errors over time for DG solutions without a limiter ($\Delta t = 0.003$) and with a limiter ($\Delta t = 0.02$).}
\label{fig:pptest2}
\end{figure}


\section*{Conflict of Interest}
The authors declare that they have no conflicts of interest.




\begin{thebibliography}{99}









\bibitem{ascher1997implicit}
U.M. Ascher, S.J. Ruuth and R.J. Spiteri, Implicit-explicit Runge-Kutta methods for time-dependent partial differential equations, Applied Numerical Mathematics, 25 (1997), pp.151-167.

\bibitem{attarakih2009solution}
M.M. Attarakih, C. Drumm and H.J. Bart, Solution of the population balance equation using the sectional quadrature method of moments (SQMOM), \emph{Chemical Engineering Science}, 64 (2009), pp.742-752.

\bibitem{bleck1970fast}
R. Bleck, A fast, approximative method for integrating the stochastic coalescence equation, \emph{Journal of Geophysical Research}, 75 (1970), 5165-5171.

\bibitem{bourgade2008convergence}
J.P. Bourgade and F. Filbet, Convergence of a finite volume scheme for coagulation-fragmentation equations, \emph{Mathematics of Computation}, 77 (2008), 851-882.

\bibitem{cheng2007discontinuous}
Y. Cheng and C.-W. Shu, A discontinuous Galerkin finite element method for directly solving the Hamilton–Jacobi equations, \emph{Journal of Computational Physics}, 223 (2007), 398–415.

\bibitem{cheng2008discontinuous}
Y. Cheng and C.-W. Shu, A discontinuous Galerkin finite element method for time-dependent partial differential equations with higher-order derivatives, \emph{Mathematics of Computation}, 77 (2008), 699–730.

\bibitem{chew1987constrained}
L.P. Chew, Constrained Delaunay triangulations, Proceedings of the Third Annual Symposium on Computational Geometry, (1987), 215–222.

\bibitem{RKDG4}
B. Cockburn, S. Hou, and C.-W. Shu, The Runge–Kutta local projection discontinuous Galerkin finite element method for conservation laws IV: the multidimensional case, \emph{Mathematics of Computation}, 54 (1990), 545–581.

\bibitem{cockburn2004locally}
B. Cockburn, F. Li, and C.-W. Shu, Locally divergence-free discontinuous Galerkin methods for the Maxwell equations, \emph{Journal of Computational Physics}, 194 (2004), 588–610.


\bibitem{RKDG2}
B. Cockburn and C.-W. Shu, TVB Runge–Kutta local projection discontinuous Galerkin finite element method for conservation laws II: general framework, \emph{Mathematics of Computation}, 52 (1989), 411–435.

\bibitem{RKDG1}
B. Cockburn and C.-W. Shu, The Runge–Kutta local projection-discontinuous-Galerkin finite element method for scalar conservation laws, \emph{ESAIM: Mathematical Modelling and Numerical Analysis}, 25 (1991), 337–361.

\bibitem{cockburn1998local}
B. Cockburn and C.-W. Shu, The local discontinuous Galerkin method for time-dependent convection–diffusion systems, \emph{SIAM Journal on Numerical Analysis}, 35 (1998), 2440–2463.


\bibitem{diemer2002moment}
R.B. Diemer and J.H. Olson, A moment methodology for coagulation and breakage problems: Part 2—moment models and distribution reconstruction, \emph{Chemical Engineering Science}, 57 (2002), pp.2211-2228.

\bibitem{drake1972}
R.~L. Drake, A general mathematical survey of the coagulation equation, \emph{Topics in Current Aerosol Research}, Pergamon Press, Oxford, 1972, 201–376.

\bibitem{filbet2004numerical}
F. Filbet and P. Laurençot, Numerical simulation of the Smoluchowski coagulation equation, \emph{SIAM Journal on Scientific Computing}, 25 (2004), 2004-2028.

\bibitem{Friedlander2000}
S.~K.~Friedlander, \emph{Smoke, Dust, and Haze: Fundamentals of Aerosol Dynamics}, 2nd~ed., Oxford University Press, Oxford, 2000.

\bibitem{gabriel2010high}
P. Gabriel and L.M. Tine, High-order WENO scheme for polymerization-type equations, \emph{ESAIM: Proceedings}, 30 (2010), 53-69.

\bibitem{gelbard1978numerical}
F. Gelbard and J.H. Seinfeld, Numerical solution of the dynamic equation for particulate systems, \emph{Journal of Computational Physics}, 28 (1978), 357-375.

\bibitem{gottlieb2009high}
S. Gottlieb, D.I. Ketcheson, and C.-W. Shu, High-order strong-stability-preserving time discretizations, \emph{Journal of Scientific Computing}, 38 (2009), 251–289.

\bibitem{gottlieb2001strong}
S. Gottlieb, C.-W. Shu, and E. Tadmor, Strong stability preserving high-order time discretization methods, \emph{SIAM Review}, 43 (2001), 89–112.


\bibitem{hasseine2015adomian}
A. Hasseine and H.J. Bart, Adomian decomposition method solution of population balance equations for aggregation, nucleation, growth and breakup processes, \emph{Applied Mathematical Modelling}, 39 (2015), 1975-1984.

\bibitem{hidy1970dynamics}
G.M. Hidy and J.R. Brock, The dynamics of aerocolloidal systems, Pergamon (1970).

\bibitem{ketcheson2009optimal}
D.I. Ketcheson, C.B. Macdonald and S. Gottlieb, Optimal implicit strong stability preserving Runge–Kutta methods, \emph{Applied Numerical Mathematics}, 59 (2009), 373-392.

\bibitem{kolewe2012population}
M.E. Kolewe, S.C. Roberts and M.A. Henson, A population balance equation model of aggregation dynamics in Taxus suspension cell cultures, \emph{Biotechnology and bioengineering}, 109 (2012), 472-482.

\bibitem{kumar1996solution}
S. Kumar and D. Ramkrishna, On the solution of population balance equations by discretization—I. A fixed pivot technique, \emph{Chemical Engineering Science}, 51 (1996), 1311-1332.

\bibitem{kumar1996solution2}
S. Kumar and D. Ramkrishna, On the solution of population balance equations by discretization—II. A moving pivot technique, \emph{Chemical Engineering Science}, 51 (1996), 1333-1342.

\bibitem{kumar1997solution}
S. Kumar and D. Ramkrishna, On the solution of population balance equations by discretization—III. Nucleation, growth and aggregation of particles, \emph{Chemical Engineering Science}, 52 (1997), 4659-4679.

\bibitem{lage2002comments}
P.L.C. Lage, Comments on the" An analytical solution to the population balance equation with coalescence and breakage-the special case with constant number of particles" by DP Patil and JRG Andrews [Chemical Engineering Science 53 (3) 599-601], \emph{Chemical Engineering Science}, 57 (2002), 4253-4254.

\bibitem{leyvraz1981singularities}
F. Leyvraz and H.R. Tschudi, Singularities in the kinetics of coagulation processes, \emph{Journal of Physics A: Mathematical and General}, 14 (1981), 3389.

\bibitem{lim2002solution}
Y.I. Lim, J.M. Le Lann, X.M. Meyer, X. Joulia, G. Lee and E.S. Yoon, On the solution of population balance equations (PBE) with accurate front tracking methods in practical crystallization processes, \emph{Chemical Engineering Science}, 57 (2002), 3715-3732.

\bibitem{liu2019high}
H. Liu, R. Gröpler, and G. Warnecke, A high-order positivity-preserving DG method for coagulation–fragmentation equations, \emph{SIAM Journal on Scientific Computing}, 41 (2019), B448–B465.

\bibitem{liu2023positivity}
C. Liu and X. Zhang, A positivity-preserving implicit-explicit scheme with high order polynomial basis for compressible Navier–Stokes equations, \emph{Journal of Computational Physics}, 493 (2023), 112496.

\bibitem{liu1996nonoscillatory}
X.D. Liu and S. Osher, Non-oscillatory high-order accurate self-similar maximum-principle-satisfying shock-capturing schemes I, \emph{SIAM Journal on Numerical Analysis}, 33 (1996), 760–779.

\bibitem{liu2019conservative}
A. Liu and S. Rigopoulos, A conservative method for numerical solution of the population balance equation, and application to soot formation, \emph{Combustion and Flame}, 205 (2019), 506-521.

\bibitem{lombart2021grain}
M. Lombart and G. Laibe, Grain growth for astrophysics with discontinuous Galerkin schemes, \emph{Monthly Notices of the Royal Astronomical Society}, 501 (2021), 4298-4316.

\bibitem{lombart2020grain}
M. Lombart, Grain growth by the Galerkin method for the formation of planets, Doctoral dissertation, Université de Lyon, (2020).

\bibitem{maisels2004direct}
A. Maisels, F.E. Kruis and H. Fissan, Direct simulation Monte Carlo for simultaneous nucleation, coagulation, and surface growth in dispersed systems, \emph{Chemical Engineering Science}, 59 (2004), 2231-2239.

\bibitem{marchisio2003quadrature}
D.L. Marchisio, J.T. Pikturna, R.O. Fox, R.D. Vigil and A.A. Barresi, Quadrature method of moments for population‐balance equations, \emph{AIChE Journal}, 49 (2003), 1266-1276.

\bibitem{mcleod1962infinite}
J.B. McLeod, On an infinite set of non-linear differential equations, \emph{The Quarterly Journal of Mathematics}, 13 (1962), pp.119-128.

\bibitem{melzak1957scalar}
Z. Melzak, A scalar transport equation, \emph{Transactions of the American Mathematical Society}, 85 (1957), 547-560.

\bibitem{Myerson2002}
A. Myerson, Handbook of industrial crystallization, Butterworth-Heinemann, (2002).

\bibitem{o2022conservative}
D. O’Sullivan and S. Rigopoulos, A conservative finite volume method for the population balance equation with aggregation, fragmentation, nucleation and growth, \emph{Chemical Engineering Science}, 263 (2022), 117925.

\bibitem{patil1998analytical}
D.P. Patil and J.R.G. Andrews, An analytical solution to continuous population balance model describing floc coalescence and breakage—a special case, \emph{Chemical Engineering Science}, 53 (1998), 599-601.

\bibitem{patterson2011stochastic}
R.I. Patterson, W. Wagner and M. Kraft, Stochastic weighted particle methods for population balance equations, \emph{Journal of Computational Physics}, 230 (2011), 7456-7472.

\bibitem{Qamar2007FVM}
S. Qamar and G. Warnecke, Numerical solution of population balance equations for nucleation, growth and aggregation processes, \emph{Computers \& Chemical Engineering}, 31 (2007), 1576-1589.

\bibitem{ramabhadran1976dynamics}
T.E. Ramabhadran, T.W. Peterson and J.H. Seinfeld, Dynamics of aerosol coagulation and condensation, \emph{AIChE Journal}, 22 (1976), 840-851.

\bibitem{reed1973triangular}
W.H. Reed and T.\ Hill, Triangular mesh methods for the neutron transport equation, Technical Report, Los Alamos Scientific Laboratory, 1973.

\bibitem{Rig2024PBE}
S. Rigopoulos, Population balance of particles in flows: From aerosols to crystallisation, Cambridge University Press, 2024.

\bibitem{rigopoulos2003finite}
S. Rigopoulos and A.G. Jones, Finite‐element scheme for solution of the dynamic population balance equation, \emph{AIChE Journal}, 49 (2003), 1127-1139.

\bibitem{riviere2008discontinuous}
B. Rivière, {Discontinuous Galerkin Methods for Solving Elliptic and Parabolic Equations: Theory and Implementation}, SIAM, 2008.

\bibitem{scott1965analytic} 
W.T. Scott, Analytic studies of cloud droplet coalescence I, \emph{Journal of the Atmospheric Sciences}, 25 (1968), 54–65.

\bibitem{sewerin2023efficient}
F. Sewerin, An efficient implementation of a conservative finite volume scheme with constant and linear reconstructions for solving the coagulation equation, \emph{Chemical Engineering Science}, 280 (2023), 119020.

\bibitem{shu1988total}
C.-W. Shu, Total-Variation-Diminishing time discretizations, \emph{SIAM Journal on Scientific and Statistical Computing}, 9 (1988), 1073–1084.

\bibitem{shu2009discontinuous}
C.-W. Shu, Discontinuous Galerkin methods: general approach and stability, in \emph{Numerical Solutions of Partial Differential Equations}, 201 (2009), 1–44.

\bibitem{shu1988efficient}
C.-W. Shu and S. Osher, Efficient implementation of essentially non-oscillatory shock-capturing schemes, \emph{Journal of Computational Physics}, 77 (1988), 439–471.

\bibitem{smoluchowski1918}
M.~von Smoluchowski, Versuch einer mathematischen Theorie der Koagulationskinetik kolloider Lösungen, \emph{Zeitschrift für physikalische Chemie}, 92 (1918), 129–168.

\bibitem{wandzurat2003symmetric}
S. Wandzurat and H. Xiao, Symmetric quadrature rules on a triangle, \emph{Computers \& Mathematics with Applications}, 45 (2003), 1829–1840.

\bibitem{xu2010local}
Y. Xu and C.-W. Shu, Local discontinuous Galerkin methods for high-order time-dependent partial differential equations, \emph{Communications in Computational Physics}, 7 (2010), 1–46.

\bibitem{xu2022third}
Z. Xu and C.-W. Shu, Third-order maximum-principle-satisfying and positivity-preserving Lax–Wendroff discontinuous Galerkin methods for hyperbolic conservation laws, \emph{Journal of Computational Physics}, 470 (2022), 111591.

\bibitem{xu2023conservation}
Z. Xu and C.-W. Shu, On the conservation property of positivity-preserving discontinuous Galerkin methods for stationary hyperbolic equations, \emph{Journal of Computational Physics}, 490 (2023), 112304.

\bibitem{xu2025stability}
Z. Xu, Z. Sun, and Y.-T. Zhang, Stability and time-step constraints of exponential time-differencing Runge–Kutta discontinuous Galerkin methods for advection–diffusion equations, arXiv:2503.03019 (2025).

\bibitem{xu2015accelerating}
Z. Xu, H. Zhao and C. Zheng, Accelerating population balance-Monte Carlo simulation for coagulation dynamics from the Markov jump model, stochastic algorithm and GPU parallel computing, \emph{Journal of Computational Physics}, 281 (2015), 844-863.


\bibitem{zhang2010maximum}
X. Zhang and C.-W. Shu, On maximum-principle-satisfying high-order schemes for scalar conservation laws, \emph{Journal of Computational Physics}, 229 (2010), 3091–3120.


\bibitem{ziff1985kinetics}
R.M. Ziff and E.D. McGrady, The kinetics of cluster fragmentation and depolymerisation, \emph{Journal of Physics A: Mathematical and General}, 18 (1985), 3027.

\end{thebibliography}
\end{document}